\documentclass[12pt]{article}

\usepackage{fontspec}      

\usepackage{amsmath, amsthm}      
\usepackage[xcharter]{newtxmath}  
\usepackage{aliascnt}             

\usepackage{stmaryrd}  
\usepackage{dsfont}    

\usepackage[dvipsnames]{xcolor}  
\usepackage{graphicx}             
\usepackage{graphviz}             
\usepackage{multicol}             
\usepackage{caption}              
\usepackage{subcaption}           

\usepackage[left=1.2in, right=1.2in, top=1.0in, bottom=0.8in, includehead, includefoot]{geometry}

\usepackage{setspace}             
\usepackage[title,toc,titletoc,page]{appendix}  

\usepackage{epigraph}
\usepackage{tcolorbox}  

\usepackage{imakeidx}      
\usepackage{authorindex}   

\usepackage[colorlinks=true, citecolor=Blue, linkcolor=blue]{hyperref}

\newcommand\myshade{85}
\colorlet{mylinkcolor}{violet}
\colorlet{mycitecolor}{PineGreen}
\colorlet{myurlcolor}{Aquamarine}
\hypersetup{
  linkcolor  = mylinkcolor!\myshade!black,
  citecolor  = mycitecolor!\myshade!black,
  urlcolor   = myurlcolor!\myshade!black,
  colorlinks = true,
}

\usepackage[capitalise,noabbrev]{cleveref}

\crefname{assumption}{Assumption}{Assumptions}
\Crefname{assumption}{Assumption}{Assumptions}
\crefname{algocf}{Algorithm}{Algorithms}
\Crefname{algocf}{Algorithm}{Algorithms}

\usepackage{natbib}  

\usepackage{enumitem}

\setlist[enumerate]{itemsep=2pt,topsep=3pt}
\setlist[itemize]{itemsep=2pt,topsep=3pt}

\setlist[enumerate,1]{label={\upshape (\roman*)}}

\usepackage[linesnumbered, ruled]{algorithm2e}
\SetKwBlock{Repeat}{repeat}{}  
\SetAlCapSkip{1.2em}           

\usepackage{tikz}
\usepackage{pifont}     
\usepackage{tikz-cd}    

\usetikzlibrary{positioning}
\usetikzlibrary{arrows}
\usetikzlibrary{calc}
\usetikzlibrary{intersections}
\usetikzlibrary{matrix}
\usetikzlibrary{decorations}
\usetikzlibrary{shapes, fit}
\usetikzlibrary{arrows.meta}
\usetikzlibrary{decorations.pathreplacing}
\usetikzlibrary{calligraphy}

\usepackage{pgf}
\usepackage{pgfplots}
\pgfplotsset{compat=1.16}

\newcommand{\navy}[1]{\textcolor{Blue}{\bf #1}}

\definecolor{codebg}{rgb}{0.95, 0.93, 0.93}
\definecolor{pale}{RGB}{235, 235, 235}
\definecolor{pale2}{RGB}{255,255,255}
\definecolor{aquamarine}{RGB}{69,139,116}

\newcommand{\EE}{\mathbb E}  
\newcommand{\RR}{\mathbb R}  
\newcommand{\NN}{\mathbb N}  
\newcommand{\PP}{\mathbb P}  
\newcommand{\TT}{\mathbb T}  
\newcommand{\OO}{\mathbb O}  

\newcommand{\Asf}{\mathsf A}

\newcommand{\Esf}{\mathsf E}

\newcommand{\Gsf}{\mathsf G}

\newcommand{\Xsf}{\mathsf X}

\newcommand{\bB}{\mathscr B}

\newcommand{\fF}{\mathcal F}

\newcommand{\vmax}{v_{\vartriangle}}

\newcommand{\vmin}{v_{\triangledown}}

\newcommand{\gmin}{g_{\triangledown}}
\newcommand{\tmin}{T_{\triangledown}}
\newcommand{\Wmin}{W_{\triangledown}}
\newcommand{\Hmin}{H_{\triangledown}}
\newcommand{\tmax}{T_{\vartriangle}}

\newcommand{\VGmin}{V^G_{\triangledown}}

\renewcommand{\leq}{\leqslant}
\renewcommand{\geq}{\geqslant}

\renewcommand{\phi}{\varphi}
\renewcommand{\epsilon}{\varepsilon}

\newcommand{\argmax}{\operatornamewithlimits{argmax}}

\newcommand{\setntn}[2]{ \{ #1 : #2 \} }          

\newcommand{\given}{\, | \,}                      
\newcommand{\cf}[1]{ \lstinline|#1| }             
\newcommand{\1}{\mathds{1}}                       
\newcommand*\diff{\mathop{}\!\mathrm{d}}          

\definecolor{thmfontcolor}{RGB}{0,0,0}

\newtheoremstyle{coloredplain}
  {\topsep}   
  {\topsep}   
  {\itshape}  
  {}          
  {\bfseries} 
  {.}         
  {.5em}      
  {}          

\theoremstyle{coloredplain}

\newtheorem{theorem}{Theorem}[section]
\newaliascnt{corollary}{theorem}
\newtheorem{corollary}[corollary]{Corollary}
\aliascntresetthe{corollary}
\newaliascnt{lemma}{theorem}
\newtheorem{lemma}[lemma]{Lemma}
\aliascntresetthe{lemma}
\newaliascnt{proposition}{theorem}
\newtheorem{proposition}[proposition]{Proposition}
\aliascntresetthe{proposition}

\theoremstyle{definition}
\newtheorem{definition}{Definition}[section]

\newtheorem{remark}{Remark}[section]
\newtheorem{assumption}{Assumption}[section]
\newaliascnt{condition}{assumption}

\aliascntresetthe{condition}

\crefname{lemma}{Lemma}{Lemmas}
\Crefname{lemma}{Lemma}{Lemmas}
\crefname{corollary}{Corollary}{Corollaries}
\Crefname{corollary}{Corollary}{Corollaries}
\crefname{proposition}{Proposition}{Propositions}
\Crefname{proposition}{Proposition}{Propositions}

\title{Abstract Dynamic Programming on \\ Partially Ordered Spaces}

\author{
    Chengyuan Peng\thanks{Research School of Economics, Australian National University, Australia. Email: \texttt{chengyuanp2022@gmail.com}}
\and John Stachurski\thanks{National Graduate Institute for Policy Studies (GRIPS). Email: \texttt{j-stachurski@grips.ac.jp}}
\and Jingni Yang\thanks{School of Economics, University of Sydney, Australia. Email: \texttt{jingni.yang@sydney.edu.au}}
}

\date{\today}

\begin{document}

\maketitle

\begin{abstract}
    We study abstract dynamic programs on partially ordered spaces, pairing
    the order-theoretic approach to dynamic programming with topological and
    metric foundations.  We show that readily verifiable forms of
    topological stability, such as global stability and contractivity of the
    policy operators, deliver the fundamental optimality properties of
    dynamic programming together with convergence of value function
    iteration, Howard policy iteration, and optimistic policy iteration.  We
    also prove that stationary policies dominate nonstationary policy plans
    under very weak assumptions.
    Applications include Markov decision processes, structural
    estimation problems in which maximization and integration are
    interchanged, optimal stopping without discounting, and Bayesian
    sequential analysis.  For the last two, our results weaken existing
    assumptions and extend algorithmic guarantees for foundational problems.
    \\

    \noindent\textbf{Keywords:} dynamic programming, Bellman equation, optimal stopping
\end{abstract}

\medskip


\section{Introduction}\label{s:intro}

Recent years have seen rapid and continuous growth in the study and use of
dynamic programming, a methodology for
optimization based on recursion. Dynamic programming is
routinely applied to existing and new problems in economics, finance and operations research,
with applications ranging from real options, insurance, firm investment,
and portfolio allocation to fiscal policy, job search,
economic geography, and production chains (see, e.g.,
\cite{carroll2009precautionary},
\cite{bauerle2011markov},
\cite{bommier2012risk},
\cite{hsu2014optimal},
\cite{rendahl2016fiscal},
\cite{light2018precautionary},
\cite{zhu2020existence},
\cite{cao2020recursive}, or
\cite{han2026deepham}).
In addition, dynamic programming forms
the foundations of reinforcement learning, which is used for tasks as diverse as
autonomous driving, hybrid vehicle power management, control of chemical
reactors, dynamic pricing, social media feeds, supply chain optimization, and
post-training of LLMs (see, e.g., \cite{kochenderfer2022algorithms} or \cite{van2025post}).

In addition to this rapidly growing range of applications, researchers have
modified and extended dynamic programming techniques such that any one of these
applications can be viewed or treated in multiple ways. For example, while traditional
dynamic programming maximizes expected rewards (or minimizes expected costs),
new variations can inject alternative criteria, such as hyperbolic discounting, risk aversion, ambiguity
aversion, desire for robustness, adversarial agents, or concern for the entire
path of reward distributions
(see, e.g.,
\cite{LiuSchiedWang2021},
\cite{degroot2021valuation}, 
\cite{jaskiewicz2021markov},
\cite{drugeon2021markovian}, 
\cite{balbus2022time}, 
\cite{bloise2023dont},
\cite{FadinaLiuWang2024},
\cite{bauerle2025time},
\cite{de2025dynamic},
\cite{bauerle2026markov}
\cite{balbus2026markov}, or
\cite{jaskiewicz2026stochastic}). 
Finally, across all these applications and
variations of the basic model, researchers have steadily increased the set of
ways in which dynamic programs can be formulated, using representations such as
Q-factors from Q-learning or post-action value functions from structural
estimation to transform existing problems in order to obtain computational or
analytical advantages (see, e.g., \cite{kristensen2021solving} or
\cite{iskhakov2020machine}).

One of the great challenges for researchers in the field of dynamic programming
is to construct fundamental optimality and algorithmic results that can handle
the vast range of applications and methodologies stemming from this growth. A
crucial contribution in this direction is the monograph of
\citet{bertsekas2022abstract}, which began the construction of an ``abstract
dynamic programming'' (ADP) framework that is general enough  to handle many of
the problems listed above.\footnote{The framework of
    \cite{bertsekas2022abstract} combines and extends earlier work that includes
    \citet{denardo1967contraction}, \citet{bertsekas1977monotone},
    \citet{szepesvari1998non}, \citet{bertsekas2017regular}, and
\citet{li2021online}.}  These ideas were further extended by
\cite{sargent2025partially} and \cite{peng2026dynamic}, using an order-theoretic
approach that can facilitate machine reasoning while also providing the
potential to handle additional applications (e.g., recent methods such
as distributional dynamic programming, as well as the reformulated dynamic
programs found in Q-learning and structural estimation, where maximization and
integration are interchanged).

In this paper we help to realize the potential of abstract dynamic programming
by pairing these order-theoretic techniques with topological structure. Our
motivation is as follows.  In the order-theoretic framework, the fundamental
optimality and convergence results rest on \emph{order stability} of the policy
operators.  While order stability can yield simple proofs, it is also abstract
and difficult to verify, as well as limited in the range of optimality and
convergence outcomes it can deliver. To remedy these issues, we work here with
\emph{pospaces} -- topological spaces endowed with a closed partial order -- in
which standard and readily checkable forms of topological stability, such as
global stability and contractivity, imply order stability.  In this pairing,
order theory supplies the optimality structure, while topology supplies
sufficient conditions and additional convergence results.

Our main contributions are as follows.  First, for regular and globally stable
ADPs on pospaces, we show that existence of a fixed point of the Bellman
operator delivers the fundamental optimality properties -- existence of optimal
policies, uniqueness of the value function as a solution to the Bellman
equation, and Bellman's principle of optimality -- together with convergence of
the three core algorithms of dynamic programming: value function iteration,
Howard policy iteration, and optimistic policy iteration. Second, we provide
versions of this result in which all conditions are placed on primitives.
Third, specializing to partially ordered metric spaces, we show that
contractivity of the policy operators yields the same optimality results, along
with geometric convergence of value function iteration. Fourth, we prove that,
under the same weak conditions, every nonstationary policy plan is weakly
dominated by a stationary policy, so that restricting attention to stationary
policies entails no loss of generality.  

Using these foundations, we study four major applications. The first is a
benchmark: for Markov decision processes (MDPs) with weak Feller transition
kernels, we recover standard optimality results, confirming that the general
theory specializes correctly.  The second treats the optimization step of
structural estimation in discrete choice models, where maximization and
integration are interchanged relative to the standard Bellman formulation.  Such
problems lie outside the classical framework of \cite{bellman1957dynamic} and
\cite{puterman2005markov}, as well as the generalized contractive frameworks of
\cite{denardo1967contraction} and \cite{bertsekas2022abstract}; nonetheless,
they are easily expressed as ADPs, and our optimality and convergence results
apply directly.  The third and fourth applications concern problems in which
discounting is absent and contractivity fails entirely: optimal stopping without
discounting and Bayesian sequential analysis.  For these classical problems, our framework
yields several new results.

Regarding optimal stopping, the discrete-time theory has a long history,
beginning with the sequential analysis of \cite{wald1947sequential} and the
foundational stopping-rule theory of \cite{chow1963optimal}; textbook
treatments include \cite{shiryaev2007optimal}, while \cite{peskir2006optimal}
develops the continuous-time theory in detail.  The most closely related
dynamic-programming framework is the stochastic shortest path (SSP)
literature initiated by \cite{bertsekas1991analysis} and extended in
\cite{yu2013qlearning} and \cite{bertsekas2018proper}, in which contractivity
is replaced by a properness condition that guarantees termination in finite
expected time.    Our optimal stopping result
complements this literature: working on a general measurable state space, and
under the single assumption that a canonical ``stop-when-cheap-to-exit''
policy has uniformly bounded expected hitting time, we obtain the full set of
optimality properties together with convergence of all three core algorithms.

Our final application is the classical Bayesian sequential analysis problem
introduced by \cite{wald1947sequential} and \cite{arrow1949bayes}; see
Chapter~5 of \cite{bertsekas1976dynamic} for a textbook treatment.  This
problem has a substantial modern literature.  For example,
\cite{naghshvar2013active} study active sequential hypothesis testing with
action-dependent observations and obtain lower bounds and asymptotically
optimal heuristic policies under information-theoretic regularity conditions
(notably, positive Kullback--Leibler divergence between the hypotheses),
while \cite{ekstrom2022bayesian} treat a composite-hypothesis variant within
exponential families and establish concavity of the cost function and
monotonicity of the optimal stopping boundary.  Neither framework supplies
convergence of Howard or optimistic policy iteration, and both impose
stronger assumptions on the observation model than we require.
Applying our optimal stopping results, we obtain existence of optimal
policies, uniqueness of the value function as a solution to the Bellman
equation, and convergence of all three core algorithms, under the minimal
condition that the candidate densities differ on a set of positive measure.

The remainder of the paper is structured as follows.  Section~\ref{s:prel}
contains mathematical preliminaries and reviews the ADP framework, including
the fundamental optimality properties and the core algorithms.
Section~\ref{s:or} states and proves our main optimality and convergence
results, treating ADPs on pospaces, ADPs on partially ordered metric spaces,
nonstationary policies, and minimization.  Section~\ref{s:a} presents the
applications and Section~\ref{s:conclusion} concludes.  Longer proofs are
deferred to the appendix.

\section{Preliminaries}\label{s:prel}

This section collects preliminaries from order theory, topology, and metric
space theory, and then reviews abstract dynamic programs, their optimality
properties, and the core algorithms used to solve them.

\subsection{Mathematical Preliminaries}\label{ss:mp}

This section collects the definitions and results from order theory,
metric space theory and topology that are used in the rest of the paper.

As usual, a \navy{partially ordered set} is a set $V$
paired with a partial order $\preceq$.  An \navy{order interval} in $V$ is a set
of the form $\setntn{v \in V}{a \preceq v \preceq b}$ for some $a,b \in V$.
When $V$ is understood, we denote this set by $[a,b]$. A set $A \subset V$ is called
\navy{order bounded} if there exists an order interval $[a,b] \subset V$ with $A
\subset [a,b]$. 
The set $V$ is called \navy{countably Dedekind complete} if 
every nonempty and countable subset that is bounded above has a supremum in $V$,
and every nonempty and countable subset that is bounded below has an infimum in $V$.

A sequence $(v_n)$ in $V$ is called  \navy{increasing} if $v_n \preceq v_{n+1}$ for all $n
\in \NN$.  If $(v_n)$ is increasing and $\bigvee_n v_n = v$ for some $v \in V$, then we
write $v_n \uparrow v$.  (Here and below, $\bigvee$ denotes a supremum and $\bigwedge$
denotes an infimum.)  Similarly, we write $v_n \downarrow v$ if $(v_n)$ is decreasing and
$\bigwedge_n v_n = v$ for some $v \in V$. A self-map $S \colon V \to V$ is called \navy{order preserving}
if $u \preceq v$ implies $Su \preceq Sv$.  The map $S$ is called \navy{order
stable} if $S$ has a unique fixed point $\bar v$ in $V$ and the following
conditions hold:
\begin{enumerate}
    \item $v \preceq Sv$ implies $v \preceq \bar v$ and
    \item $Sv \preceq v$ implies $\bar v \preceq v$.
\end{enumerate}
Order stability is closely related to more standard forms of stability involving
topologies.  To clarify this fact and introduce other related concepts, suppose
now that $V$ also has a topology and let $S$ be a self-map on $V$.  We call $S$
\navy{globally stable} on $V$ if $S$ has a unique fixed point $u^* \in V$ and
$S^k u \to u^*$ as $k \to \infty$ for all $u \in V$.

A partial order $\preceq$ on $V$ is called \navy{closed} if, for every
directed index set $\Lambda$ and any nets $(u_\alpha)_{\alpha \in \Lambda}$ and
$(v_\alpha)_{\alpha \in \Lambda}$ in $V$,
\begin{equation*}
    u_\alpha \to u,
    \;\;
    v_\alpha \to v
    \;
    \text{ and }
    \;
    u_\alpha \preceq v_\alpha \text{ for all } \alpha \in \Lambda
    \quad \implies \;
    u \preceq v.
\end{equation*}
A \navy{partially ordered space}, also called a \navy{pospace},
is a topological space endowed with a closed partial order.
We will make use of the following elementary result, which is valid in any pospace.

\begin{lemma}\label{l:posmcoc}
    If $v \in V$ with $v_\alpha \to v$ and $v_\alpha \preceq v$ for all $\alpha
    \in \Lambda$, then $\bigvee_\alpha v_\alpha = v$.
\end{lemma}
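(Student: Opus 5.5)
The plan is to check the two defining properties of a supremum directly. The first is that $v$ is an upper bound of $\{v_\alpha\}_{\alpha \in \Lambda}$. The second is that $v$ lies below every other upper bound. The first property is exactly the hypothesis $v_\alpha \preceq v$ for all $\alpha \in \Lambda$, so nothing needs to be done there. All the work is in the second property, and there the only tool needed is closedness of the partial order, as defined just before the lemma.

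For the second property, fix an arbitrary upper bound $u \in V$, so that $v_\alpha \preceq u$ for every $\alpha \in \Lambda$. Define a second net on the same directed index set by $u_\alpha := u$ for all $\alpha \in \Lambda$. A constant net converges to its constant value in any topology, since every neighborhood of $u$ contains all of its terms. So we have two nets over $\Lambda$ with
\begin{equation*}
    v_\alpha \to v,
    \qquad
    u_\alpha \to u,
    \qquad
    v_\alpha \preceq u_\alpha \text{ for all } \alpha \in \Lambda .
\end{equation*}
Closedness of $\preceq$ then yields $v \preceq u$. Since $u$ was an arbitrary upper bound, $v$ is the least upper bound, and therefore $\bigvee_\alpha v_\alpha = v$. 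Antisymmetry of the partial order guarantees that a least upper bound is unique, so the supremum is well defined and equals $v$.

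There is no real obstacle in this argument. The only points needing care are that the constant net must be indexed by the same directed set $\Lambda$, so that the definition of a closed order applies verbatim, and that we never need the limit of a net to be unique. The argument works in any topological space, Hausdorff or not, because closedness is applied to the specific limits $v$ and $u$. The same argument with the order reversed gives the dual statement: if $v_\alpha \to v$ and $v \preceq v_\alpha$ for all $\alpha$, then $\bigwedge_\alpha v_\alpha = v$.
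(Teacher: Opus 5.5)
Your proposal is correct and follows the paper's proof: $v$ is an upper bound by hypothesis, and for any other upper bound $w$, closedness of $\preceq$ applied to $v_\alpha \to v$ gives $v \preceq w$. The paper leaves the constant net $u_\alpha \equiv w$ implicit, whereas you write it out, which makes the use of the net-based definition of closedness fully explicit.
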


\begin{proof}
    Let $(v_\alpha)$ and $v$ be as stated. By assumption, $v$ is an upper bound of
    $(v_\alpha)$.  If $w$ is any other upper bound, then $v_\alpha \preceq w$ for
    all $\alpha$.  Since $\preceq$ is closed and $v_\alpha \to v$, this implies $v \preceq w$.
    Hence $v$ is the least upper bound of $(v_\alpha)$.
\end{proof}




\begin{lemma}\label{l:pspace}
    Let $V$ be a pospace and let $S$ be an order preserving
    self-map on $V$. If $S$ is globally stable on $V$, then $S$ is order stable
    on $V$.
\end{lemma}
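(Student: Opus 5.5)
The plan is to verify the two conditions in the definition of order stability directly. Uniqueness of the fixed point comes for free, since global stability already gives a unique fixed point $u^*$ in $V$; set $\bar v = u^*$. What remains is to show that $v \preceq Sv$ implies $v \preceq \bar v$, and that $Sv \preceq v$ implies $\bar v \preceq v$. By symmetry (reversing the order, which is still closed), it suffices to treat the first condition carefully and note that the second follows in the same way.

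Fix $v \in V$ with $v \preceq Sv$. First I will show by induction that the orbit is increasing, that is, $S^k v \preceq S^{k+1} v$ for all $k$. The base case is the hypothesis. For the inductive step, apply the order preserving map $S$ to $S^k v \preceq S^{k+1} v$. Transitivity then gives $v \preceq S^k v$ for every $k \in \NN$.

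Next I will pass to the limit using closedness of the order. Consider the constant sequence $u_k = v$, which converges to $v$, and the sequence $v_k = S^k v$, which converges to $\bar v$ by global stability. Sequences are nets indexed by $\NN$, and $u_k \preceq v_k$ for all $k$. Closedness of $\preceq$ therefore yields $v \preceq \bar v$, as required. Alternatively, I could invoke Lemma~\ref{l:posmcoc}: since $S^k v \preceq S^{k+m} v$ for all $m$, closedness gives $S^k v \preceq \bar v$, so $\bar v = \bigvee_k S^k v \succeq v$. The direct argument is shorter, however. The case $Sv \preceq v$ is handled identically: the orbit is decreasing, $S^k v \preceq v$ for all $k$, and closedness gives $\bar v \preceq v$.

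There is no serious obstacle here. The only point requiring care is that the definition of closedness is stated for nets over a common directed index set, and a pair consisting of a constant sequence and the orbit sequence fits this definition with $\Lambda = \NN$. Note also that the argument uses only sequential convergence of orbits, so no first-countability assumption on the topology is needed.
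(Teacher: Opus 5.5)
Your proof is correct and is the same argument the paper uses: iterate $v \preceq Sv$ with order preservation to get $v \preceq S^n v$ for all $n$, pass to the limit using closedness of $\preceq$ and global stability, and handle the reverse inequality symmetrically. Pairing the constant net with the orbit, as you do, spells out the closedness step that the paper leaves implicit.
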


\begin{proof}
    Let $S, V$ have the stated properties and let $\bar v$ be the unique fixed
    point of $S$ in $V$.  If $v \in V$ and $v \preceq S \, v$, then, iterating
    on this inequality and using the fact that $S$ is order preserving, we have
    $v \preceq S^n v$ for all $n \in \NN$. Since the partial order is closed and
    $S$ is globally stable,  taking the limit gives $v \preceq \bar v$.  
    This proves one direction in the definition of order stability.
    The proof of the other direction is similar.
\end{proof}

A \navy{partially ordered metric space} is a tuple $(V, \preceq, d)$ where $(V,
\preceq)$ is a partially ordered set, $d$ is a metric on $V$, and $(V, \preceq)$ is a pospace
under the topology induced by $d$.  In particular, $\preceq$ is closed with
respect to $d$. The metric $d$ is called \navy{sup-nonexpansive} if, for any
index set $\Lambda$ and any nets $(v_\alpha)_{\alpha \in \Lambda}$ and
$(w_\alpha)_{\alpha \in \Lambda}$ in $V$,
\begin{equation*}
    d \left(
        \bigvee_\alpha \, v_\alpha, \bigvee_\alpha \, w_\alpha
      \right)
    \leq \bigvee_\alpha \, d(v_\alpha, w_\alpha)
\end{equation*}
whenever both suprema on the left-hand side exist.

As usual, a self-map $S$ on $V$ is called a \navy{contraction of modulus
$\lambda \in [0, 1)$} if
\begin{equation}\label{eq:uc}
    d(Su, Sv) \leq \lambda \, d(u, v) \quad \text{for all} \quad u, v \in V.
\end{equation}
The map $S$ is called \navy{asymptotically contracting} if
\begin{equation*}
    \lim_{n \to \infty} d(S^n u, S^n v) = 0
    \quad \text{for all } u, v \in V.
\end{equation*}
Every contraction map is asymptotically contracting.  If, in addition, $(V, d)$
is complete, then Banach's contraction mapping theorem gives $S$ a unique
fixed point.

\subsection{Background on ADPs}\label{ss:ba}

Abstract dynamic programs in the sense of the term used here were introduced in
\cite{sargent2025partially}.  To make the paper somewhat self-contained, we
review the key definitions.  

Recalling the definition from \cite{sargent2025partially}, an \navy{abstract
dynamic program} (ADP) is a pair $(V, \TT)$, where
\begin{enumerate}
    \item $V = (V, \preceq)$ is a partially ordered set and
    \item $\TT = \setntn{T_\sigma}{\sigma \in \Sigma}$ is a nonempty family of
        order preserving self-maps on $V$.
\end{enumerate}
In what follows,
\begin{itemize}
    \item $V$ is called the \navy{value space}.
    \item Each operator $T_\sigma$ in $\TT$ is called a \navy{policy
        operator}.
    \item $\Sigma$ is an arbitrary index set and elements of $\Sigma$
        will be referred to as \navy{policies}.
\end{itemize}

In applications we will impose conditions under which each $T_\sigma$
has a unique fixed point in $V$.  In these settings, the significance of $T_\sigma$ is
that its fixed point represents the lifetime value of following policy
$\sigma$.  We will denote the unique fixed point of $T_\sigma$
by $v_\sigma$ and call it the \navy{$\sigma$-value function}.

Let $(V, \TT)$ be an ADP with policy set $\Sigma$. Given $v \in V$, we 
\begin{itemize}
    \item say that $\sigma \in \Sigma$  is \navy{$v$-greedy} if $T_\tau \, v
        \preceq T_\sigma \, v$ for all $\tau \in \Sigma$, and
    \item set $V_G \coloneq \setntn{v \in V}{\text{ at least one $v$-greedy
        policy exists}}$.
\end{itemize}
In many applications, solving for a $v$-greedy policy is straightforward.
Moreover, there exist conditions under which solving the overall problem reduces
to solving for a $v$-greedy policy with a ``correct'' choice of $v$. This idea
is at the heart of dynamic programming and the details are clarified below.

We now list properties  useful for ADP optimality theory.
First, we call $(V, \TT)$
\begin{itemize}
    \item[] \navy{well-posed} if each $T_\sigma \in \TT$ has a unique fixed point in $V$.
\end{itemize}
Well-posedness is a minimal precondition for a coherent  dynamic
program: in order to maximize over policies, we need well-defined lifetime values.
When well-posedness holds we will
always use $v_\sigma$ to denote the unique fixed point of $T_\sigma$.
Also, we set
\begin{equation*}
    V_\Sigma \coloneq \text{ all $v \in V$ such that $T_\sigma \, v = v$ for
    some $T_\sigma \in \TT$}.
\end{equation*}

We call $(V, \TT)$
\begin{itemize}
    \item \navy{finite} if $\TT$ is a finite set,
    \item \navy{regular} if $V_G = V$; that is, if a $v$-greedy policy exists
        for all $v \in V$,
    \item \navy{order stable} if each $T_\sigma \in \TT$ is order stable on $V$,
        and
    \item \navy{order bounded} if there exists a $u \in V$ with $T_\sigma \, u
        \preceq u$ for all  $T_\sigma \in \TT$.
\end{itemize}
Finiteness holds for many standard models, such as Markov decision processes (MDPs) with
finite state and action spaces.
Regularity  helps us to construct algorithms and to obtain existence of optimal
policies. Order stability is important for obtaining optimality properties, as
emphasized in \cite{sargent2025partially}. Order boundedness is a useful
property for optimality and is implied by, but does not imply, the value space
$V$ itself being order bounded.

As in \cite{sargent2025partially}, we 
define the \navy{Bellman operator} generated by $(V, \TT)$  via
\begin{equation*}
    Tv \coloneq \bigvee_{\sigma} T_\sigma \, v
    \quad \text{whenever the supremum exists}.
\end{equation*}
In addition, we say that $v \in V$ satisfies the \navy{Bellman equation} if
\begin{equation*}
    v = \bigvee_{\sigma} T_\sigma \, v.
\end{equation*}
Here the supremum is taken over all $\sigma \in \Sigma$.
There is, in general, no guarantee that the supremum exists.
    Evidently $v \in V$ satisfies the
Bellman equation if and only if $Tv$ exists and $Tv = v$.

\subsection{Optimality}\label{ss:opt}

\emph{For the remainder of this section, $(V, \TT)$ is a well-posed ADP.}
In this subsection we define optimality for ADPs, connect it to the Bellman
equation, and state the fundamental optimality properties.  We then give
sufficient conditions for these properties to hold.

We say that a policy $\sigma \in \Sigma$ is \navy{optimal} for
$(V, \TT)$ if $v_\sigma$ is a greatest element of $V_\Sigma$.
In other words, $\sigma$ is optimal if it attains the ``highest possible'' lifetime value.

Perhaps the most important aspect of the theory of dynamic
programming is the link between optimality and the Bellman equation.
To clarify this link we introduce some terminology and set
\begin{equation*}
    v^* \coloneq \bigvee_\sigma v_\sigma = \bigvee V_\Sigma
    \quad \text{whenever the supremum exists}.
\end{equation*}
When $v^*$ exists (i.e., when the supremum exists) we call $v^*$ the
\navy{value function} of the ADP.
The following statements are obvious from the definitions:
\begin{itemize}
    \item Existence of an optimal policy $\sigma$ implies that $v^*$ exists and is equal to $v_\sigma$.
    \item If $v^* = v_\sigma$ for some $\sigma \in \Sigma$, then $\sigma$ is
        optimal.
\end{itemize}
At the same time, existence of $v^*$ does not generally imply existence of a
greatest element (and hence an optimal policy).

\begin{definition}
    We say that \navy{Bellman's principle of optimality holds} if
    \begin{equation}\label{eq:bpo}
      \setntn{\sigma \in \Sigma}{\text{$\sigma$ is optimal }}
      =
      \setntn{\sigma \in \Sigma}{\text{$\sigma$ is $v^*$-greedy }}.
    \end{equation}
\end{definition}

(When $v^*$ does not exist both sets are understood as empty.)

\begin{definition}
    We say that the \navy{fundamental optimality properties hold} if
    \begin{enumerate}\label{enum:b13}
        \item[(B1)] at least one optimal policy exists,
        \item[(B2)] $v^*$ exists and is the unique solution to the Bellman equation in $V_G$, and
        \item[(B3)] Bellman's principle of optimality holds.
    \end{enumerate}
\end{definition}

It is important to emphasize here that (B1)--(B3) are not independent. Indeed,
(B2) implies both (B1) and (B3), as shown in the next proposition.

\begin{proposition}\label{p:foo}
    The fundamental optimality properties hold if and only if $v^*$ exists and
    is the unique solution to the Bellman equation in $V_G$.
\end{proposition}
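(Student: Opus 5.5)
The statement has two directions. The direction from the fundamental optimality properties to the displayed condition is immediate, since the displayed condition is exactly (B2). So the work is to show that (B2) implies (B1) and (B3). I will use only well-posedness (each $T_\sigma$ has the unique fixed point $v_\sigma$) and the definitions of $V_G$, greedy policies, and optimality. No stability assumption is needed.

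So assume $v^*$ exists and is the unique solution to the Bellman equation in $V_G$. In particular $v^* \in V_G$, so there is a $v^*$-greedy policy $\sigma$. Since $v^*$ solves the Bellman equation, $Tv^* = \bigvee_\tau T_\tau v^*$ exists and equals $v^*$. Greediness gives $T_\tau v^* \preceq T_\sigma v^*$ for all $\tau$, so $T_\sigma v^*$ is an upper bound of $\{T_\tau v^*\}$. It is also a member of that set, so it is its supremum. Hence $T_\sigma v^* = Tv^* = v^*$. By well-posedness, $v^*$ is then the unique fixed point of $T_\sigma$, that is, $v_\sigma = v^*$. By the second bullet observation preceding Bellman's principle (if $v^* = v_\sigma$ then $\sigma$ is optimal), $\sigma$ is optimal. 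This proves (B1).

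For (B3), the argument just given applies verbatim to an arbitrary $v^*$-greedy policy. Thus every $v^*$-greedy policy is optimal, which is the inclusion $\supseteq$ in \eqref{eq:bpo}.

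For the reverse inclusion, let $\sigma$ be optimal. Then $v_\sigma = v^*$ by the first bullet observation. Consequently
\begin{equation*}
    T_\sigma v^* = T_\sigma v_\sigma = v_\sigma = v^* = Tv^* = \bigvee_\tau T_\tau v^*.
\end{equation*}
Hence $T_\tau v^* \preceq T_\sigma v^*$ for every $\tau \in \Sigma$, so $\sigma$ is $v^*$-greedy. This gives \eqref{eq:bpo}, completing the proof.

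There is no real obstacle here. The only point requiring care is to invoke the existence of $Tv^*$, which holds because $v^*$ solves the Bellman equation, before using the supremum inequalities.
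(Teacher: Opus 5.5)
Your proof is correct. In the paper, the nontrivial direction is a single citation: it takes (B2) as given and invokes Lemma~4.2 of \cite{sargent2025partially} to obtain (B1) and (B3).

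You instead derive both properties directly from the definitions and well-posedness. A $v^*$-greedy $\sigma$ satisfies $T_\sigma v^* = T v^* = v^*$, so $v_\sigma = v^*$ and $\sigma$ is optimal. Conversely, an optimal $\sigma$ has $T_\sigma v^* = v^* = T v^*$, which forces $\sigma$ to be $v^*$-greedy.

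This makes the argument self-contained rather than relying on an external lemma. It also shows that the uniqueness clause in (B2) is not needed for (B1) and (B3). You only use that $v^*$ exists, lies in $V_G$, and satisfies $T v^* = v^*$.
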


\begin{proof}
    Regarding ($\Leftarrow$), suppose that $v^*$ exists and is the unique solution to
    the Bellman equation in $V_G$.  (B2) holds by
    the hypothesis.  By Lemma~4.2 of \cite{sargent2025partially}, an optimal policy exists and Bellman's principle of
    optimality holds. This gives (B1) and (B3) respectively.
    The claim ($\Rightarrow$) is trivial, so the proof of \cref{p:foo} is complete.
\end{proof}

Despite the fact that (B2) implies both (B1) and (B3), we have stated them
together because,  in terms of applications, all three parts are significant.
(B1) tells us that the problem at hand has a solution.  (B3) implies that a
solution can be computed by taking a $v^*$-greedy policy, and that any other
optimal policy must also be $v^*$-greedy. Finally, (B2) provides a restriction
that can help us calculate $v^*$. Provided that we search in $V_G$, any fixed
point of $T$ is equal to $v^*$.

\cref{p:foo} tells us our main goal is to construct conditions under which
$v^*$ exists and is the unique fixed point of $T$ in $V_G$.  We begin this
task in Section~\ref{s:or}.

\subsection{Algorithms}

In this section we review the three core algorithms for solving dynamic
programs: value function iteration (VFI), Howard policy iteration (HPI), and
optimistic policy iteration (OPI).  For background on the role and significance
of these algorithms, see \cite{bertsekas2022abstract},
\cite{sargent2025dynamic}, or \cite{sargent2025partially}.

Let $(V, \TT)$ be a regular and well-posed ADP. As in \cite{sargent2025partially}, we
define the \navy{Howard policy operator} corresponding to $(V, \TT)$ via
\begin{equation*}
    H \colon V_G \to V_\Sigma, \qquad
    H v = v_\sigma \quad \text{ where $\sigma$ is $v$-greedy}.
\end{equation*}
For each $m \in \NN$, we define the \navy{optimistic policy operator}
\begin{equation}\label{eq:wmopmax}
    W \colon V_G \to V, \qquad
    W v
    = T^m_\sigma v \quad \text{where } \sigma \text{ is $v$-greedy}.
\end{equation}
Here and below, the dependence of $W$ on $m$ is often suppressed to simplify notation.

When $(V, \TT)$ is regular, $V_G = V$, so the map $W$ is well-defined on all
of $V$ (just like the Bellman operator $T$). The Howard policy operator $H$
is well-defined on all of $V$ when, in addition, $(V, \TT)$ is well-posed.
Note that, for both of these maps, we always select the same $v$-greedy
policy when applying them to some fixed $v$.\footnote{In general, designating a $v$-greedy policy for all $v \in V$
    requires the Axiom of Choice. In practice, many applications induce some
    structure on the policy set that can be used to produce simple selection
mechanisms.}

As in \cite{sargent2025partially}, we associate VFI, OPI, and HPI with fixing a
$v \in V$ and then iteratively applying the operators $T$, $W$, and $H$
respectively.

Suppose that $(V, \TT)$ is a regular ADP with Bellman operator $T$ and 
the fundamental optimality
properties hold. Let $v^*$ denote the value function. We set
\begin{itemize}
    \item $V_U \coloneq$ all $v \in V$ with $v \preceq Tv$.
\end{itemize}
In this setting, we say that
\begin{itemize}\label{item:algos}
    \item \navy{VFI converges} if $T^n v \uparrow v^*$ for all $v \in V_U$,
    \item \navy{OPI converges} if $W^n v \uparrow v^*$ for all $v \in V_U$, and
    \item \navy{HPI converges} if $H^n v \uparrow v^*$ for all $v \in V_U$.
\end{itemize}
For OPI convergence, the meaning is
that convergence holds for any choice of the OPI step size $m \in \NN$.
The following lemma will be useful for our analysis:

\begin{lemma}\label{l:cl2}
   If $(V, \TT)$ is regular and order stable, then
   \begin{enumerate}
       \item $v \preceq v_\sigma$ for every $v \in V_U$ and every
             $v$-greedy $\sigma \in \Sigma$.  In particular,
             if $v^*$ exists, then $v \preceq v^*$ for all $v \in V_U$.
       \item convergence of VFI implies convergence of OPI and HPI.
   \end{enumerate}
\end{lemma}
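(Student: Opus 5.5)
Part (i) is a direct application of order stability. Take $v \in V_U$ and let $\sigma$ be $v$-greedy; such a $\sigma$ exists by regularity. Greediness gives $T_\sigma v \succeq T_\tau v$ for all $\tau$, so $T_\sigma v$ is an upper bound of $\{T_\tau v\}$. Since $v \in V_U$, the supremum $Tv$ exists, and $Tv = T_\sigma v$ because $T_\sigma v$ is itself a member of the family. Hence $v \preceq Tv = T_\sigma v$. Order stability of $T_\sigma$ (condition (i)) then yields $v \preceq v_\sigma$. If $v^*$ exists, then $v_\sigma \preceq v^*$, so $v \preceq v^*$.

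For part (ii), I assume VFI converges, so the fundamental optimality properties hold and $T^n v \uparrow v^*$ for every $v \in V_U$. The plan is a sandwich argument: show $T^n v \preceq W^n v \preceq v^*$ and $T^n v \preceq H^n v \preceq v^*$, with both sequences increasing. Increasing sequences squeezed between $T^n v \uparrow v^*$ and the bound $v^*$ have supremum $v^*$, since any upper bound of $(W^n v)$ bounds every $T^n v$ and hence $v^*$.

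The key one-step facts for $v \in V_U$ with $\sigma$ $v$-greedy are as follows.

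(a) $Tv = T_\sigma v \preceq T^m_\sigma v$. This follows from $v \preceq T_\sigma v$ by iterating and using order preservation.

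(b) $T^m_\sigma v \preceq v_\sigma$ and $v_\sigma \preceq v^*$. The first holds since $v \preceq T_\sigma v$ iterates upward and is bounded by $v_\sigma$: apply order stability to $u = T_\sigma^k v$, which satisfies $u \preceq T_\sigma u$. The second holds by definition of $v^*$.

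(c) $Wv$ and $Hv$ lie in $V_U$. For $w = T^m_\sigma v$ we have $w \preceq T_\sigma w \preceq Tw$, because $T_\sigma w \preceq \bigvee_\tau T_\tau w$. Here I need that $Tw$ exists, which I get from regularity: a $w$-greedy policy attains the supremum. Similarly $v_\sigma = T_\sigma v_\sigma \preceq T v_\sigma$.

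(d) Monotonicity of $T$ on $V$: if $u \preceq u'$, then $T_\tau u \preceq T_\tau u' \preceq Tu'$ for every $\tau$, so $Tu \preceq Tu'$.

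With these in hand, induction gives $T^n v \preceq W^n v$. Indeed, $T^{n+1}v = T(T^n v) \preceq T(W^n v) \preceq W^{n+1} v$, using (d), then (a) applied to $W^n v \in V_U$ by (c). Likewise $T^n v \preceq H^n v$, using $Tu \preceq T^m_\sigma u \preceq v_\sigma = Hu$ for $u \in V_U$. Monotonicity of the sequences comes from $u \preceq Tu \preceq Wu$ and $u \preceq Hu$, the latter by part (i). The upper bound $W^n v, H^n v \preceq v^*$ follows from (b), or directly from part (i) applied to elements of $V_U$.

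The main obstacle is bookkeeping rather than depth. Every iterate must stay in $V_U$ so that part (i) and greediness can be reused, and $T$ must be shown order preserving where defined; both are supplied by regularity as indicated.
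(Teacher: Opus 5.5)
Your proposal is correct. Part (i) is essentially the paper's argument, but part (ii) takes a genuinely different route.

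For (ii), the paper gives no argument of its own. It observes that every order stable operator is upward stable and then invokes Corollary~6.3 of \cite{sargent2025partially}. You instead prove the implication directly, via the sandwiches $T^n v \preceq W^n v \preceq v^*$ and $T^n v \preceq H^n v \preceq v^*$. These rest on three facts:
\begin{enumerate}
\item the invariance of $V_U$ under $W$ and $H$;
\item monotonicity of $T$ on $V = V_G$;
\item the one-step bound $Tu = T_\sigma u \preceq T_\sigma^m u \preceq v_\sigma \preceq v^*$ for $u \in V_U$ with $\sigma$ $u$-greedy.
\end{enumerate}
The inductions and the final squeeze are all sound. Step (a) uses $m \geq 1$, which is fine since $m \in \NN$.

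What your route buys is self-containment and transparency about the mechanism. It also shows that only the upward half of order stability is ever used (if $u \preceq T_\sigma u$ then $u \preceq v_\sigma$), together with existence of $v^*$. So your argument works under the same upward-stability hypothesis the paper's citation relies on, and in effect reproves that corollary in this setting. The paper's route buys brevity, at the cost of deferring the substance of (ii) to an external result.
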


\begin{proof}
    Let $(V, \TT)$ be as stated, fix $v \in V_U$, and let $\sigma$ be any
    $v$-greedy policy (which exists by regularity).  Then $Tv = T_\sigma \,
    v$, and since $v \in V_U$ we have $v \preceq T_\sigma \, v$.  This
    inequality and order stability yield $v \preceq v_\sigma$, where
    $v_\sigma$ is the fixed point of $T_\sigma$.  If $v^*$ exists, then
    $v_\sigma \preceq v^*$ and hence $v \preceq v^*$.  Part (ii) is a direct
    consequence of Corollary~6.3 of \cite{sargent2025partially}, on noting
    that every order stable operator is upward stable in the sense of that
    paper.
\end{proof}

\section{Optimality Results}\label{s:or}

In this section we study ADPs in settings where the value space is not only
partially ordered but also a pospace.  We then consider
optimality in settings where the policy operators have topological stability
properties, as well as order properties.
\emph{Throughout this section, $V$ is always assumed to be a pospace.}

In Section~\ref{ss:adppospace} we study ADPs on pospace, leveraging global stability
to obtain optimality and convergence results.  In Section~\ref{ss:ams} we specialize
to partially ordered metric spaces, where contraction-based arguments apply.
Section~\ref{ss:nonstat} treats nonstationary policies, and Section~\ref{ss:minrpospace}
provides minimization counterparts.

\subsection{ADPs on Pospace}\label{ss:adppospace}

Let $V$ be a pospace and let $(V, \TT)$ be an ADP. We say that
\begin{itemize}
    \item[] $(V, \TT)$ is \navy{globally stable} if
        each $T_\sigma \in \TT$ is globally stable on $V$.
\end{itemize}

Obviously, if $(V, \TT)$ is globally stable, then $(V, \TT)$ is well-posed.  The
next result shows that regularity and global stability together yield strong
optimality properties.

\begin{theorem}\label{t:pospace}
    Let $(V, \TT)$ be regular and globally stable.  If $T$ has a fixed point
    in $V$, then
    \begin{enumerate}
        \item the fundamental optimality properties hold and
        \item VFI, OPI and HPI all converge.
    \end{enumerate}
\end{theorem}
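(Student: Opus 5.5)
Let $\bar v \in V$ be a fixed point of $T$. The strategy is to show directly that $\bar v = v^*$, that it is attained by a policy, and that it is the unique fixed point of $T$ in $V_G$. Part (i) then follows from \cref{p:foo}. For part (ii), by \cref{l:cl2}(ii) it suffices to prove convergence of VFI. Throughout, I use \cref{l:pspace}: since each $T_\sigma$ is order preserving and globally stable on the pospace $V$, the ADP is order stable. It is also well-posed, with $v_\sigma$ the unique fixed point of $T_\sigma$.

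\emph{Step 1: $\bar v = v^*$ and an optimal policy exists.} By regularity, pick a $\bar v$-greedy $\sigma$. Then $T_\sigma \bar v = \bigvee_\tau T_\tau \bar v = T\bar v = \bar v$, and uniqueness of the fixed point of $T_\sigma$ gives $\bar v = v_\sigma \in V_\Sigma$. For any $\tau \in \Sigma$ we have $T_\tau \bar v \preceq T \bar v = \bar v$, so order stability of $T_\tau$ gives $v_\tau \preceq \bar v$. Hence $\bar v$ is the greatest element of $V_\Sigma$, so $v^* = \bar v$ exists and $\sigma$ is optimal.

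\emph{Step 2: uniqueness in $V_G$.} Let $w \in V_G$ satisfy $Tw = w$, and let $\sigma$ be $w$-greedy. As in Step 1, $T_\sigma w = w$, so $w = v_\sigma \preceq v^*$. Conversely, let $\sigma^*$ be optimal. Then $T_{\sigma^*} w \preceq Tw = w$, and order stability gives $v^* = v_{\sigma^*} \preceq w$. Thus $w = v^*$, and \cref{p:foo} yields (i).

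\emph{Step 3: VFI.} This is the step needing the topology rather than just order. By regularity, $T$ is defined on all of $V$, and it is order preserving: if $u \preceq v$, then $T_\tau u \preceq T_\tau v \preceq Tv$ for all $\tau$, so $Tu \preceq Tv$.

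Fix $v \in V_U$. Since $v \preceq Tv$, iterating gives that $(T^n v)$ is increasing. Each $T^n v$ lies in $V_U$, so \cref{l:cl2}(i) gives $T^n v \preceq v^*$. It remains to show $\bigvee_n T^n v = v^*$ without assuming any completeness. Let $\sigma^*$ be optimal. By induction,
\begin{equation*}
    T^n v \succeq T_{\sigma^*} T^{n-1} v \succeq T_{\sigma^*}^n v,
\end{equation*}
and $T_{\sigma^*}^n v \to v_{\sigma^*} = v^*$ by global stability. Now take any upper bound $w$ of $(T^n v)$. Then $T^n_{\sigma^*} v \preceq w$ for all $n$, and closedness of $\preceq$ gives $v^* \preceq w$. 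Since $v^*$ is itself an upper bound, $T^n v \uparrow v^*$.

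The main subtlety is this last step. Pairing the lower sequence $T^n_{\sigma^*} v$, which converges topologically, with closedness of the order is what replaces any Dedekind completeness or continuity assumption on $T$. Convergence of OPI and HPI then follows from \cref{l:cl2}(ii).
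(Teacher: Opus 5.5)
Your proof is correct and follows essentially the same route as the paper. Your Step 3 is the paper's VFI argument: you sandwich $T_{\sigma^*}^n v \preceq T^n v \preceq v^*$, then use global stability of $T_{\sigma^*}$ and closedness of $\preceq$ (which the paper packages as \cref{l:posmcoc}), and finish with \cref{l:cl2}(ii) for OPI and HPI. The only difference is in part (i): the paper obtains order stability from \cref{l:pspace} and then invokes Theorem~5.1 of \cite{sargent2025partially}, whereas your Steps 1--2 prove that fact directly from order stability, which makes your argument self-contained.
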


\begin{proof}
    Let $(V, \TT)$ and $T$ be as stated.  Since $(V, \TT)$ is order stable
    (\cref{l:pspace}), part (i) follows from Theorem~5.1 of \cite{sargent2025partially}.  Regarding convergence
    of VFI, fix $v \in V_U$, let $\sigma$ be an optimal policy and let $v^*$
    be the value function. We have  $T_\sigma \, v \preceq T v \preceq v^*$,
    where the last inequality is by \cref{l:cl2} (since, by that lemma, $v
    \preceq v^*$ and, therefore, $Tv \preceq Tv^* = v^*$). From this chain
    of inequalities, combined with the fact that $T_\sigma \preceq T$ on $V$, we
    obtain $T_\sigma^n \, v \preceq T^n v \preceq v^*$ for all $n$. As
    $\sigma$ is optimal and $(V, \TT)$ is globally stable, the policy operator
    $T_\sigma$ is globally stable with unique fixed point $v^*$. Because
    $T_\sigma^n \, v \preceq v^*$ for all $n$ and $T_\sigma^n \, v \to v^*$,
    \cref{l:posmcoc} implies that the supremum of $T^n_\sigma \, v$ is $v^*$.
    From this fact and $T_\sigma^n \, v \preceq T^n v \preceq v^*$ for all
    $n$, the supremum of $T^n v$ is also $v^*$.  Moreover,
    this sequence is increasing (because $v \in V_U$), which leads us to $T^n v
    \uparrow v^*$.  Hence VFI converges. Since $(V, \TT)$ is regular and order
    stable (by \cref{l:pspace}), convergence of VFI implies convergence of OPI
    and HPI (\cref{l:cl2}).
\end{proof}

\cref{t:pospace} is a high-level result because a
condition (existence of a fixed point) is placed on the derived object $T$,
rather than the primitives $V$ and $\TT$.  Below, we present a sequence of
results that leverage \cref{t:pospace} while also placing all assumptions
on primitives. 

\begin{corollary}\label{c:pospace}
    Let $(V, \TT)$ be regular and globally stable.  If $(V, \TT)$ is also
    finite, then
    \begin{enumerate}
        \item the fundamental optimality properties hold and
        \item VFI, OPI and HPI all converge.
    \end{enumerate}
\end{corollary}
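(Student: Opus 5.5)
The plan is to reduce \cref{c:pospace} to \cref{t:pospace}. Both conclusions follow from that theorem once we show that the Bellman operator $T$ has a fixed point in $V$. So the whole task is to produce such a fixed point from finiteness, regularity and global stability.

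First, I will show that $T$ is well-defined on all of $V$. By regularity, each $v \in V$ admits a $v$-greedy policy $\sigma$, and then $T_\sigma v$ is the greatest element of $\setntn{T_\tau v}{\tau \in \Sigma}$. Hence $Tv = T_\sigma v$ exists. Next, I will construct a fixed point by running Howard policy iteration through the finite family of value functions. Pick any $\sigma_0 \in \Sigma$ and set $w_0 = v_{\sigma_0}$. Given $w_k = v_{\sigma_k}$, let $\sigma_{k+1}$ be $w_k$-greedy and put $w_{k+1} = v_{\sigma_{k+1}}$.

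The sequence $(w_k)$ is increasing, which follows from order stability (available via \cref{l:pspace}). Indeed,
\begin{equation*}
    w_k = T_{\sigma_k} w_k \preceq T_{\sigma_{k+1}} w_k,
\end{equation*}
and order stability of $T_{\sigma_{k+1}}$ then gives $w_k \preceq v_{\sigma_{k+1}} = w_{k+1}$. Since $\TT$ is finite, $V_\Sigma$ is finite, so the increasing sequence $(w_k)$ takes only finitely many values. Antisymmetry then forces $w_{k+1} = w_k$ for some $k$.

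At such a $k$, with $\sigma = \sigma_{k+1}$ greedy for $w_k$, we get
\begin{equation*}
    T w_k = T_\sigma w_k \preceq T_\sigma w_{k+1} = w_{k+1} = w_k.
\end{equation*}
Here the inequality uses $w_k \preceq w_{k+1}$ and that $T_\sigma$ is order preserving. Conversely, $w_k = T_{\sigma_k} w_k \preceq T w_k$. Antisymmetry yields $T w_k = w_k$, so $T$ has a fixed point, and \cref{t:pospace} gives (i) and (ii).

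The main step requiring care is the stabilization argument. An increasing sequence in a finite poset must be eventually constant: if there were infinitely many strict increases, some value would recur, and antisymmetry rules out a strict cycle. Globally stable policy operators suffice for well-posedness, so $v_\sigma$ is defined throughout. No topological input beyond \cref{l:pspace} is needed.
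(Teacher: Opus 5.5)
Your argument is correct. It shares the paper's outer structure: reduce everything to \cref{t:pospace} by exhibiting a fixed point of $T$. The difference is in how that fixed point is obtained.

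\textbf{The paper's route.} It does not construct the fixed point. It notes that global stability gives order stability (\cref{l:pspace}) and then cites Theorem~5.4 of \cite{sargent2025partially}, which asserts the fundamental optimality properties for regular, finite, order stable ADPs. The required fixed point of $T$ is then $v^*$ itself, and \cref{t:pospace} supplies the convergence claims.

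\textbf{Your route.} You prove directly that Howard policy iteration started at any $v_{\sigma_0} \in V_\Sigma$ behaves as follows:
\begin{enumerate}
    \item it is increasing, by greediness plus order stability;
    \item it stays in the finite set $V_\Sigma$, so some value must repeat;
    \item a repeated value, by antisymmetry, yields a $k$ with $w_{k+1} = w_k$, and hence $T w_k = w_k$.
\end{enumerate}

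\textbf{Comparison.} Your version is self-contained and uses nothing beyond order stability and antisymmetry. As a by-product, it shows that HPI from any $\sigma$-value function reaches $v^*$ in finitely many steps (at most $|\TT|$). The paper's version is shorter but rests on an external result whose proof is not reproduced.

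A small simplification: once $w_{k+1} = w_k$, you get $T w_k = T_{\sigma_{k+1}} w_{k+1} = w_{k+1} = w_k$ directly. The separate lower bound $w_k \preceq T w_k$ is therefore not needed.
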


\begin{proof}
    Since $(V, \TT)$ is globally stable, $(V, \TT)$ is also order stable
    (\cref{l:pspace}). Since it is also regular and finite, Theorem~5.4 of
    \cite{sargent2025partially} implies that the fundamental optimality
    properties hold. In particular, $T$ has a fixed point in $V$, so all the
    conclusions of \cref{c:pospace} hold.
\end{proof}

The next theorem uses countable Dedekind completeness and order boundedness,
combined with global stability.

\begin{theorem}\label{t:tspo}
    Let $(V, \TT)$ be regular and globally stable.  If, in addition,
    $(V, \TT)$ is order bounded and $V$ is countably Dedekind complete, then
    \begin{enumerate}
        \item the fundamental optimality properties hold and
        \item VFI, OPI and HPI all converge.
    \end{enumerate}
\end{theorem}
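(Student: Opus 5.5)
The plan is to reduce to \cref{t:pospace}: it suffices to show that $T$ has a fixed point in $V$. Conclusions (i) and (ii) then follow at once. Global stability gives order stability of $(V,\TT)$ by \cref{l:pspace}, and well-posedness is immediate. One could instead cite the Dedekind-completeness optimality result of \cite{sargent2025partially} directly for order stable, regular, order bounded ADPs. However, I would give a self-contained construction via Howard policy iteration, which uses only countable suprema, so that only countable Dedekind completeness is needed.

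\textbf{Constructing a candidate fixed point.} Fix any $\sigma_0 \in \Sigma$ and set $w_0 = v_{\sigma_0}$. Since $w_0 = T_{\sigma_0} w_0 \preceq T w_0$, we have $w_0 \in V_U$. Inductively, let $\sigma_{k+1}$ be $w_k$-greedy (using regularity) and set $w_{k+1} = v_{\sigma_{k+1}} = H w_k$. By \cref{l:cl2}(i), $w_k \preceq w_{k+1}$. Moreover $w_{k+1}$ is a fixed point of $T_{\sigma_{k+1}}$, so $w_{k+1} \preceq T w_{k+1}$ and $w_{k+1} \in V_U$, and the induction continues. The sequence is bounded above: if $u$ is the order bound with $T_\sigma u \preceq u$ for all $\sigma$, then order stability gives $v_\sigma \preceq u$ for every $\sigma$. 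Countable Dedekind completeness therefore yields $\bar w = \bigvee_k w_k$.

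\textbf{Showing $T\bar w = \bar w$.} This is the main step. Let $\sigma$ be $\bar w$-greedy. Then $w_k \preceq T w_k = T_{\sigma_{k+1}} w_k \preceq T_{\sigma_{k+1}} w_{k+1} = w_{k+1}$. I would argue as follows.
\begin{enumerate}
\item For any $\tau$, $T_\tau w_k \preceq T_{\sigma_{k+1}} w_k \preceq w_{k+1} \preceq \bar w$. Hence $T_\tau \bar w$ is an upper bound of what we need, provided we can pass to the limit in $k$.
\item Monotonicity alone gives only $T_\tau w_k \preceq T_\tau \bar w$, so a direct limit is not available. Instead I would use order stability. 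For a fixed $\tau$, define $u_k = w_k$ and note $T_\tau w_k \preceq \bar w$. Then examine $\bar w$ versus $T_\tau \bar w$.
\item Since $w_k \preceq \bar w$, we get $w_k = T_{\sigma_k} w_k \preceq T_{\sigma_k}\bar w \preceq T\bar w = T_\sigma \bar w$. Taking the supremum over $k$ gives $\bar w \preceq T_\sigma \bar w$, and order stability then gives $\bar w \preceq v_\sigma$.
\item Now $v_\sigma \preceq H\bar w$. I would compare $v_\sigma$ with the $w_k$ using the greedy property at $w_k$: $T_\sigma w_k \preceq T_{\sigma_{k+1}} w_k \preceq w_{k+1} \preceq \bar w$.
\end{enumerate}

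\textbf{Closing the argument.} The remaining task is to show $T_\sigma \bar w \preceq \bar w$. I expect this to be the main obstacle, because order preservation does not pass through the supremum. My first attempt would use topology. By global stability, $T_\sigma^n w_k \to v_\sigma$, and one would like an order bound $T_\sigma^n w_k \preceq \bar w$, obtained from $T_\sigma w_j \preceq \bar w$ for all $j$. This bound is not immediate for $n \geq 2$, since it requires control of $T_\sigma \bar w$ itself. Failing that, I would fall back on invoking the corresponding result of \cite{sargent2025partially} (order stable, regular, order bounded, countably Dedekind complete implies the fundamental optimality properties, in particular that $T$ has a fixed point). I would then finish via \cref{t:pospace}.
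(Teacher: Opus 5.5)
\textbf{There is a genuine gap: the step that needs the topology is never proved.}

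Your reduction to \cref{t:pospace} is correct. So is the construction of $\bar w = \bigvee_k w_k$, together with $\bar w \preceq T_\sigma \bar w$ and hence $\bar w \preceq v_\sigma$ for a $\bar w$-greedy $\sigma$. But you never prove $v_\sigma \preceq \bar w$.

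The obstacle you describe is not real. You need no control of $T_\sigma \bar w$ if you compare $T_\sigma$-iterates with $T$-iterates from a common starting point:
\begin{enumerate}
\item Since $T_\sigma \preceq T$ pointwise and both maps are order preserving, $T_\sigma^n w_0 \preceq T^n w_0$.
\item Since $T w_k = T_{\sigma_{k+1}} w_k \preceq T_{\sigma_{k+1}} w_{k+1} = w_{k+1}$, induction gives $T^n w_0 \preceq w_n$.
\item Hence $T_\sigma^n w_0 \preceq \bar w$ for all $n$. Global stability gives $T_\sigma^n w_0 \to v_\sigma$, so closedness of $\preceq$ yields $v_\sigma \preceq \bar w$.
\end{enumerate}
Therefore $\bar w = v_\sigma$ and $T\bar w = T_\sigma v_\sigma = \bar w$. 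This is the device used in the paper's proof. The paper runs it with the VFI sequence $T^n v$ from some $v \in V_\Sigma$, which is increasing in $[v,u]$, instead of your HPI sequence.

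\textbf{Your fallback does not close the gap either.} The statement you would cite is false: regularity, order stability, order boundedness and countable Dedekind completeness alone do not give a fixed point of $T$. Global stability is doing real work in \cref{t:tspo}.

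Here is a counterexample. Let $L$ consist of the countable ordinals $\alpha < \omega_1$ followed by a reversed copy $\{\beta^* : \beta < \omega_1\}$. Every $\alpha$ lies below every $\beta^*$, and $\beta^* \preceq \gamma^*$ iff $\gamma \leq \beta$. This chain is countably Dedekind complete with top element $0^*$, but the chain $\{\alpha : \alpha < \omega_1\}$ has no supremum in $L$. Define the policy operators:
\begin{enumerate}
\item for $\gamma < \omega_1$, $T_{\sigma_\gamma} \alpha = \min\{\alpha + 1, \gamma + 1\}$ and $T_{\sigma_\gamma} \beta^* = \gamma + 1$;
\item $T_{\sigma_\infty} \alpha = 0$ and $T_{\sigma_\infty} \beta^* = (\beta + 1)^*$.
\end{enumerate}
Each map is order preserving and order stable, with fixed points $\gamma + 1$ and $0$ respectively. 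The ADP is regular and order bounded by $0^*$. Yet $T \alpha = \alpha + 1$ and $T \beta^* = (\beta + 1)^*$, so $T$ has no fixed point and $v^*$ does not exist.

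Any result from \cite{sargent2025partially} you could invoke would therefore need stronger hypotheses than you have. As written, the central step of your proof is missing.
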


\begin{proof}
    In view of \cref{t:pospace}, we need only show that
    $T$ has a fixed point in $V$.  To see this, we use the order bounded
    property to take a $u \in V$ such that $T_\sigma \, u \preceq u$ for all $\sigma \in
    \Sigma$. Now let $v$ be any element of $V_\Sigma$.  Global stability
    implies order stability (\cref{l:pspace}), so we have $v \preceq u$.
    Moreover, $v \in V_U$: by regularity, $v \in V_G$, so taking $\sigma$ with
    $T_\sigma v = v$ gives $v = T_\sigma v \preceq T v$.
    Hence $T$ is a self-map on the order interval $[v, u]$.
    Letting $v_n \coloneq T^n v$ and applying countably Dedekind completeness, we
    have $v_n \uparrow \bar v$ for some $\bar v \in [v, u]$.  We claim that
    $T \bar v = \bar v$. To see this, first observe that $v_{n+1} = T v_n
    \preceq T \bar v$ for all $n$, so $\bar v \preceq T \bar v$. Letting
    $\sigma$ be $\bar v$-greedy, we have $\bar v \preceq T \bar v = T_\sigma \,
    \bar v$, so, by order stability,
    $\bar v \preceq v_\sigma$.  Also, letting $w_n \coloneq T_\sigma^n \, v$,
    induction on $n$ gives $w_n \preceq v_n$ for all $n$, since
    $w_n = T_\sigma \, w_{n-1} \preceq T_\sigma \, v_{n-1} \preceq T \, v_{n-1} = v_n$
    by order preservation of $T_\sigma$ and $T_\sigma \preceq T$.  Combined with
    $v_n \preceq \bar v \preceq v_\sigma$, we have
    $w_n \preceq v_n \preceq \bar v \preceq v_\sigma$ for all $n \in \NN$.
    By global stability, $w_n \to v_\sigma$.  Applying \cref{l:posmcoc}, we
    also have $\bigvee_n w_n = v_\sigma$.  From this fact and the previous chain of
    inequalities, it must be that $\bar v = v_\sigma$.  Hence $T \bar v =
    T_\sigma \, \bar v = T_\sigma \, v_\sigma = v_\sigma = \bar v$. This
    completes the proof that $\bar v$ is a fixed point of $T$.
\end{proof}

\subsection{ADPs on Metric Space}\label{ss:ams}

In this section we add more structure by assuming that our value space $V$ is in
fact a partially ordered metric space, as defined in Section~\ref{ss:mp}.
By construction, this specializes the earlier
setting of Section~\ref{ss:adppospace}, where $(V, \preceq)$ is just a pospace.

Throughout Section~\ref{ss:ams},
\begin{itemize}
    \item $(V, \TT)$ is an ADP and
    \item $V = (V, \preceq, d)$ is a partially ordered metric space.
\end{itemize}
Given a closed subset $V_0$ of $V$, we will say that the ADP $(V, \TT)$ is
\navy{$V_0$-semi-regular} if $V_0 \subset V_G$ and $T V_0 \subset V_0$.
In addition, we will say that VFI is \navy{geometrically convergent on $V_0$} if
$v^*$ exists and there is a $\beta \in (0,1)$ such that
\begin{equation*}
    \text{
        $d(T^n v, v^*) = \OO(\beta^n)$ as $n \to \infty$ for each $v \in V_0$.
    }
\end{equation*}
Here $f(n) =
\OO(\beta^n)$ means that there exists a $C < \infty$ with $f(n) \leq C \beta^n$
for all $n \in \NN$.

In stating the next theorem, we use the notion of sup-nonexpansiveness
from Section~\ref{ss:mp}.

\begin{theorem}\label{t:contract}
    Let $(V, \TT)$ be $V_0$-semi-regular for some closed $V_0 \subset V$, and
    let $d$ be complete and sup-nonexpansive. If each $T_\sigma \in \TT$ is a
    contraction of modulus $\beta$ on $V$, then
    \begin{enumerate}
        \item the fundamental optimality properties hold,
        \item the value function $v^*$ lies in $V_0$, and
        \item VFI is geometrically convergent on $V_0$.
    \end{enumerate}
    If, in addition, $(V, \TT)$ is regular, then OPI and HPI also converge.
\end{theorem}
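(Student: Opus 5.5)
Since each $T_\sigma$ is a contraction of modulus $\beta$ on the complete space $(V,d)$, Banach's theorem gives each $T_\sigma$ a unique fixed point $v_\sigma$ with $T_\sigma^k u \to v_\sigma$ for all $u$. Thus $(V,\TT)$ is globally stable, and by \cref{l:pspace} it is order stable. The plan is first to show that $T$ is a contraction of modulus $\beta$ on $V_0$, then to obtain its fixed point $\bar v \in V_0$, and finally to identify $\bar v$ with $v^*$ and read off the optimality properties.

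\emph{Contraction of $T$ on $V_0$.} For $u,w \in V_0 \subset V_G$, the suprema $Tu = \bigvee_\sigma T_\sigma u$ and $Tw = \bigvee_\sigma T_\sigma w$ both exist, since greedy policies attain them. Sup-nonexpansiveness then gives
\begin{equation*}
    d(Tu, Tw) \leq \bigvee_\sigma d(T_\sigma u, T_\sigma w) \leq \beta\, d(u,w).
\end{equation*}
Because $TV_0 \subset V_0$ and $V_0$ is a closed subset of a complete space, Banach's theorem yields a unique fixed point $\bar v \in V_0$ of $T$. It also gives $d(T^n v, \bar v) \leq \beta^n d(v,\bar v)$ for every $v \in V_0$.

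\emph{Identifying $\bar v$ with $v^*$.} This is the main step. Let $\sigma$ be $\bar v$-greedy, which is possible since $\bar v \in V_G$. Then $T_\sigma \bar v = T\bar v = \bar v$, so $\bar v = v_\sigma$. Now take any $\tau \in \Sigma$. We have $T_\tau \bar v \preceq T\bar v = \bar v$, and order stability of $T_\tau$ gives $v_\tau \preceq \bar v$. Hence $\bar v = v_\sigma$ is a greatest element of $V_\Sigma$. So $v^* = \bar v$ exists, lies in $V_0$, and $\sigma$ is optimal; this gives (ii).

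\emph{Uniqueness in $V_G$ and the remaining conclusions.} Uniqueness is where the argument must go beyond $V_0$, because the contraction argument only controls fixed points there. Let $w \in V_G$ with $Tw = w$, and pick a $w$-greedy $\sigma'$. Then $T_{\sigma'} w = w$, so $w = v_{\sigma'} \preceq v^*$. For the reverse inequality, use the optimal $\sigma$: $T_\sigma w \preceq Tw = w$, so order stability gives $v^* = v_\sigma \preceq w$. Hence $w = v^*$, which is (B2), and \cref{p:foo} delivers (i). Part (iii) is the Banach estimate above with $C = d(v, v^*)$.

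\emph{Regular case.} If the ADP is regular, $(V,\TT)$ is regular and globally stable, and $T$ has the fixed point $v^*$. \cref{t:pospace} then gives convergence of VFI, OPI and HPI on $V_U$.
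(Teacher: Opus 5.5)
Your proof is correct and follows the paper's route: Banach's theorem gives global (hence order) stability, sup-nonexpansiveness makes $T$ a $\beta$-contraction on the closed set $V_0$, its fixed point yields $v^* \in V_0$ and the geometric rate, and the regular case is passed to \cref{t:pospace}. The only difference is that the paper cites Theorem~5.1 of \cite{sargent2025partially} to go from a fixed point of $T$ in $V_G$ plus order stability to the fundamental optimality properties, whereas you verify that step directly (identifying $\bar v = v_\sigma$ via a greedy policy, then a two-sided order-stability argument for uniqueness in $V_G$), which makes your version self-contained.
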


To prove \cref{t:contract}, we use the following elementary lemma. In the statement of the
lemma, $(V, \TT)$ is an ADP,  $V_0$ is a subset of $V$ and $Tv \coloneq
\bigvee_\sigma T_\sigma \, v$ exists at each $v \in V_0$.

\begin{lemma}\label{l:snms}
    Let $d$ be sup-nonexpansive.  If $TV_0 \subset V_0$ and
    each $T_\sigma \in \TT$ is a contraction of modulus $\beta$ on $V$, then $T$
    is a contraction of modulus $\beta$ on $V_0$.
\end{lemma}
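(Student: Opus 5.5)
The argument is a direct application of sup-nonexpansiveness to the two families $(T_\sigma u)_{\sigma \in \Sigma}$ and $(T_\sigma v)_{\sigma \in \Sigma}$, indexed by the same set $\Sigma$. Fix $u, v \in V_0$. By the standing assumption, $Tu = \bigvee_\sigma T_\sigma u$ and $Tv = \bigvee_\sigma T_\sigma v$ both exist, so both suprema on the left-hand side of the sup-nonexpansiveness inequality exist. We therefore get
\begin{equation*}
    d(Tu, Tv)
    = d\Bigl( \bigvee_\sigma T_\sigma u, \bigvee_\sigma T_\sigma v \Bigr)
    \leq \bigvee_\sigma d(T_\sigma u, T_\sigma v).
\end{equation*}
The supremum on the right is a supremum of real numbers, taken in $[0, \infty]$.

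Next, we apply the contraction property of each policy operator termwise: $d(T_\sigma u, T_\sigma v) \leq \beta \, d(u, v)$ for every $\sigma \in \Sigma$. The constant $\beta \, d(u,v)$ is thus an upper bound of the family, so the real supremum satisfies $\bigvee_\sigma d(T_\sigma u, T_\sigma v) \leq \beta \, d(u,v)$. Combining the two displays gives $d(Tu, Tv) \leq \beta \, d(u,v)$ for all $u, v \in V_0$.

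Finally, the hypothesis $T V_0 \subset V_0$ makes $T$ a self-map on $V_0$. Together with the previous bound, this shows that $T$ is a contraction of modulus $\beta$ on $V_0$ in the sense of \eqref{eq:uc}, with $V_0$ in place of $V$.

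There is no real obstacle here. The only point needing care is that sup-nonexpansiveness applies only when both order suprema exist. That is exactly guaranteed by $u, v \in V_0$ and the standing assumption that $T$ is defined on $V_0$. Note also that the supremum on the right-hand side of sup-nonexpansiveness is a real supremum, not an order supremum in $V$. Closedness of $V_0$ is not needed for this lemma; it will matter only later, when Banach's theorem is applied to $T$ on the complete space $V_0$.
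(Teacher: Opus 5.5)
Your proposal is correct and matches the paper's proof: apply sup-nonexpansiveness to the families $(T_\sigma u)_{\sigma \in \Sigma}$ and $(T_\sigma v)_{\sigma \in \Sigma}$, then bound the resulting real supremum termwise by $\beta \, d(u,v)$, with $T V_0 \subset V_0$ making $T$ a self-map on $V_0$. Your side remarks are also accurate: both order suprema exist because $T$ is defined on $V_0$, and closedness of $V_0$ is only needed later, in the Banach fixed-point step of \cref{t:contract}.
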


\begin{proof}
    Let the stated conditions hold, so that $T$ is a self-map on $V_0$.
    For any $v, w \in V_0$, we have
    \begin{equation*}
        d(Tv, Tw)
        = d \left(
            \bigvee_\sigma T_\sigma \, v, \bigvee_\sigma T_\sigma \, w
            \right)
        \leq \bigvee_\sigma \, d ( T_\sigma \, v,  T_\sigma \, w )
        \leq \beta \, d (  v,  w ).
    \end{equation*}
    This shows that $T$ is a contraction of modulus $\beta$ on $V_0$.
\end{proof}

\begin{proof}[Proof of \cref{t:contract}]
    Let $V$ and $\TT$ have the stated properties. By completeness and Banach's
    fixed point theorem, the ADP $(V, \TT)$ is globally stable and hence order
    stable (\cref{l:pspace}). Moreover, each $T_\sigma$ is a contraction of
    modulus $\beta$ on $V$ and $T$ is a well-defined self-map on $V_0$.
    \cref{l:snms} now implies that $T$ is a contraction of modulus $\beta$ on
    $V_0$.  As $V_0$ is closed and $V$ is complete, this implies that $T$ has a
    fixed point in $V_0 \subset V_G$. Hence, by Theorem~5.1 of \cite{sargent2025partially}, the fundamental
    optimality properties hold.  It follows that $v^*$ exists and is the unique
    fixed point of $T$ in $V_G$. Since $T$ has a fixed point in $V_0$, this also
    implies that $v^* \in V_0$. Geometric convergence of VFI now follows from
    contractivity of $T$ on $V_0$.

    If $(V, \TT)$ is also regular, then convergence of OPI and HPI follows from
    \cref{t:pospace}.
\end{proof}

\subsection{Nonstationary Policies}\label{ss:nonstat}

Consider an ADP $(V, \TT)$ where $V = (V, \preceq, d)$ is a partially ordered
metric space. In all of the preceding discussion, when discussing optimality of
this ADP, we focused on stationary policies, assuming that a fixed $\sigma$ is
applied at each point in time. But is this focus on stationary policies justified?
Could it be that higher lifetime value is available when we allow a change of
policy in each period?

To address this question, suppose that we can select a \navy{policy plan} $\bar
\sigma \coloneq (\sigma_t)_{t \geq 0}$ in the infinite Cartesian product
$\prod_{t \geq 0} \Sigma$ and apply the $t$-th element $\sigma_t$ at time $t$.
The lifetime value of $\bar \sigma$ can be defined by
\begin{equation}\label{eq:vsignos}
    v_{\bar \sigma} = \lim_{n \to \infty} T_{\sigma_0} T_{\sigma_1} \cdots T_{\sigma_n} v.
\end{equation}
To ensure that \eqref{eq:vsignos} exists
and is independent of $v$, we require the following:

\begin{assumption}\label{a:uc}
    The metric $d$ is complete and sup-nonexpansive.
    In addition, there exists a positive constant $\lambda$ with $\lambda < 1$
    and
    \begin{equation*}
        d( T_\sigma \, v , T_\sigma \, w ) \leq \lambda d(v , w)
        \quad \text{for all } v, w \in V
        \text{ and all } \sigma \in \Sigma.
    \end{equation*}
    In addition, for all $v \in V$ we have $\sup_{\sigma \in \Sigma} d(v, T_\sigma \, v) < \infty$.
\end{assumption}

We can now state the main result of this section, which shows that, under the
stated assumptions, any policy
plan is (weakly) dominated in value by a stationary policy.

\begin{theorem}\label{t:spo}
    If $(V, \TT)$ is regular and \cref{a:uc} holds, then the fundamental
    optimality properties hold.  In addition,
    given any policy plan $\bar \sigma$, there exists a stationary policy
    $\sigma \in \Sigma$ such that $v_{\bar \sigma} \preceq v_\sigma$.
\end{theorem}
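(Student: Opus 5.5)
The first claim follows from \cref{t:contract} with $V_0 = V$. Regularity gives $V \subset V_G$, so $Tv = \bigvee_\sigma T_\sigma v$ exists for every $v \in V$. Then $T V \subset V$ trivially, $V$ is closed in itself, and \cref{a:uc} supplies completeness, sup-nonexpansiveness and a uniform modulus $\lambda$ for all $T_\sigma$. Hence $(V,\TT)$ is $V$-semi-regular and \cref{t:contract} yields the fundamental optimality properties. Moreover, \cref{l:snms} shows that $T$ is a contraction of modulus $\lambda$ on $V$, with unique fixed point $v^*$. By (B1) there is an optimal policy $\sigma$ with $v_\sigma = v^*$. This $\sigma$ is the stationary policy we will use, so the second claim reduces to showing $v_{\bar\sigma} \preceq v^*$.

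Before comparing, I need $v_{\bar\sigma}$ in \eqref{eq:vsignos} to be well defined. Set $u_n(v) \coloneq T_{\sigma_0}\cdots T_{\sigma_n} v$. Contractivity gives
\begin{equation*}
    d(u_{n+1}(v), u_n(v)) \leq \lambda^{n+1} d(T_{\sigma_{n+1}} v, v) \leq \lambda^{n+1} M_v,
    \qquad M_v \coloneq \sup_{\tau} d(v, T_\tau v) < \infty.
\end{equation*}
The finiteness of $M_v$ is exactly why \cref{a:uc} includes the last condition. So $(u_n(v))$ is Cauchy and converges by completeness. For independence of $v$, note $d(u_n(v), u_n(w)) \leq \lambda^{n+1} d(v,w) \to 0$. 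Either way, the limit is well defined.

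For the comparison, fix $v \in V$ and show by induction on the number of factors that $T_{\sigma_0}\cdots T_{\sigma_n} v \preceq T^{n+1} v$. The base case is $T_{\sigma_n} v \preceq T v$, which holds since $Tv$ is a supremum over policies. For the inductive step, write $T_{\sigma_k} (T_{\sigma_{k+1}}\cdots T_{\sigma_n} v) \preceq T_{\sigma_k} T^{n-k} v \preceq T^{n-k+1} v$, using first that $T_{\sigma_k}$ is order preserving and then $T_{\sigma_k} \preceq T$ pointwise. Now let $n \to \infty$. The left side converges to $v_{\bar\sigma}$, and the right side $T^{n+1}v$ converges to $v^*$ because $T$ is a contraction on the complete space $V$. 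Since $\preceq$ is closed, we get $v_{\bar\sigma} \preceq v^* = v_\sigma$.

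The only delicate point is the well-definedness of $v_{\bar\sigma}$ and its independence of $v$, which the uniform bound in \cref{a:uc} is designed to handle. The rest is monotonicity together with closedness of the order.
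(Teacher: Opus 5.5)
Your proof is correct and follows the paper's route. The first claim comes from \cref{t:contract} with $V_0 = V$, $v_{\bar\sigma}$ is shown to be well defined by the same Cauchy and independence argument as \cref{l:aue}, and the comparison follows by induction plus closedness of $\preceq$. The only difference is minor: the paper starts the iteration at $v = v_\sigma$, so that $T^{j+1} v_\sigma = v_\sigma$ and no convergence of $T^n v$ is needed, whereas you start at an arbitrary $v$ and use contractivity of $T$ (via \cref{l:snms}) to obtain $T^{n+1} v \to v^*$.
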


The proof of \cref{t:spo} is provided in Section~\ref{ss:nsp} of the appendix.

\subsection{Minimization}\label{ss:minrpospace}

Now let's turn to the minimization case.  In the definitions we simply replace
suprema with infima.  Then we restate \cref{t:pospace}, \cref{t:tspo} and
\cref{t:contract} for minimization problems. These minimization results are
obtained from the maximization results using order duality.  The arguments are
relatively mechanical, so all proofs have been deferred to Section~\ref{ss:pmc}
in the appendix.

Let $(V, \TT)$ be an ADP with policy set $\Sigma$. We define the \navy{Bellman
min-operator} $T$ corresponding to $(V, \TT)$ by
\begin{equation}\label{eq:bmin}
    T v = \bigwedge_\sigma T_\sigma \, v
    \quad \text{whenever the infimum exists.}
\end{equation}
We say that $v \in V$ satisfies the \navy{Bellman min-equation} if $T v = v$.

\begin{remark}\label{r:ab}
    Note the abuse of notation here: $T$ is used both for the original Bellman
    max-operator from Section~\ref{ss:ba} and the new Bellman min-operator in
    \eqref{eq:bmin}. Below, when introducing min-versions of $V_G$, $v^*$, $H$, and
    $W$, we will again recycle the symbols, also shadowing the max case.  We
    follow this convention to avoid tedious and complex notation throughout.  In
    the theorem statements and applications below, we will always be considering
    one case or another (max or min) and no confusion will result.  In Section~\ref{ss:pmc},
    when we provide proofs that involve max and min arguments together, we will
    use more precise notation.
\end{remark}

Paralleling the maximization terminology, we say that
\begin{itemize}
    \item[] $\sigma \in \Sigma$ is \navy{$v$-min-greedy} if $T_{\sigma} \, v
        \preceq T_\tau \, v$ for all $\tau \in \Sigma$.
\end{itemize}
Analogous to the max case, $\sigma$ is $v$-min-greedy if and only if $T_\sigma
\, v = T v$. In addition, we say that $(V, \TT)$ is
\begin{itemize}
    \item \navy{min-order bounded} if
        there exists a $b \in V$ with $b \preceq T_\sigma \, b$ for all
        $T_\sigma \in \TT$, and
    \item \navy{min-regular} if, for each $v \in V$, at least one
        $v$-min-greedy policy exists.
\end{itemize}
We let
\begin{equation*}
    V_G
    = \setntn{v \in V}{\text{ at least one $v$-min-greedy policy exists}}.
\end{equation*}

Now suppose $(V, \TT)$ is well-posed and let $V_\Sigma$ be
defined as before (i.e., the set of $\sigma$-value functions). In this setting
we set $v^* = \bigwedge_\sigma v_\sigma$ and call it the \navy{min-value
function} whenever the infimum exists. Also, we say that

\begin{itemize}
    \item $\sigma \in \Sigma$ is \navy{min-optimal} for $(V, \TT)$ if $v_\sigma
        = v^*$ (equivalently, if $v_\sigma$ is a least element of
        $V_\Sigma$), and
    \item $(V, \TT)$ obeys \navy{Bellman's principle of min-optimality} if
        \begin{equation*}
            \text{$\sigma \in \Sigma$ is min-optimal for $(V, \TT)$}
            \quad \iff \quad
            \text{$\sigma$ is $v^*$-min-greedy}.
        \end{equation*}
\end{itemize}

When $(V, \TT)$ is min-regular and well-posed, we define
the \navy{Howard policy min-operator} corresponding to $(V, \TT)$ via
\begin{equation*}
    H \colon V_G \to V_\Sigma, \qquad
    H v = v_\sigma \quad \text{ where $\sigma$ is $v$-min-greedy},
\end{equation*}
as well as, for each $m \in \NN$, the \navy{optimistic policy min-operator}
via
\begin{equation}\label{eq:wmop}
    W \colon V_G \to V, \qquad
    W v
    \coloneq T^m_\sigma v \quad \text{where} \quad \sigma \text{ is $v$-min-greedy}.
\end{equation}
We say that the \navy{fundamental min-optimality properties hold} if
\begin{enumerate}
    \item at least one min-optimal policy exists,
    \item $v^*$ exists and is the unique solution to the Bellman min-equation in $V_G$, and
    \item Bellman's principle of min-optimality holds.
\end{enumerate}

Let $V_D$ be all $v \in V$ with $T v \preceq v$.  We say that
\begin{itemize}
    \item \navy{min-VFI converges} if $T^n v \downarrow v^*$ for all $v \in V_D$,
    \item \navy{min-OPI converges} if $W^n v \downarrow v^*$ for all $v \in V_D$
        and all $m \in \NN$, and
    \item \navy{min-HPI converges} if $H^n v \downarrow v^*$ for all $v \in V_D$.
\end{itemize}

We can now state min-versions of two of our previous max-based optimality
results.  Our first is a min-version of Theorem~\ref{t:pospace}.

\begin{theorem}\label{t:minpospace}
    Let $(V, \TT)$ be min-regular and globally stable.  Let $T$ be the Bellman
    min-operator for  $(V, \TT)$.  If $T$ has a fixed point in $V$, then
    \begin{enumerate}
        \item the fundamental min-optimality properties hold and
        \item min-VFI, min-OPI and min-HPI all converge.
    \end{enumerate}
\end{theorem}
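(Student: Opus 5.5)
The plan is to reduce \cref{t:minpospace} to \cref{t:pospace} by order duality. Let $\succeq$ denote the reverse order on $V$, defined by $u \succeq v$ iff $v \preceq u$, and write $V^\partial = (V, \succeq)$ with the same topology.

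\textbf{Step 1: transfer the structure to the dual pospace.}
\begin{itemize}
    \item The reverse order is closed. If $u_\alpha \to u$, $v_\alpha \to v$ and $u_\alpha \succeq v_\alpha$, then $v_\alpha \preceq u_\alpha$, so closedness of $\preceq$ gives $v \preceq u$. Hence $V^\partial$ is a pospace.
    \item Each $T_\sigma$ is order preserving for $\succeq$, so $(V^\partial, \TT)$ is an ADP.
    \item Global stability is purely topological, so it carries over unchanged.
    \item Infima in $V$ are suprema in $V^\partial$. Hence the Bellman min-operator of $(V,\TT)$ is exactly the Bellman max-operator of $(V^\partial,\TT)$, and it has the same fixed point.
    \item A policy is $v$-min-greedy in $V$ iff it is $v$-greedy in $V^\partial$. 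So min-regularity is regularity of the dual ADP, and the two sets $V_G$ coincide.
\end{itemize}
It follows that \cref{t:pospace} applies to $(V^\partial, \TT)$.

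\textbf{Step 2: translate the conclusions back.}
\begin{itemize}
    \item The dual value function $\bigvee^\partial_\sigma v_\sigma$ equals $\bigwedge_\sigma v_\sigma$, the min-value function.
    \item A greatest element of $V_\Sigma$ under $\succeq$ is a least element under $\preceq$, so optimal policies in the dual are exactly min-optimal policies.
    \item Bellman's principle translates directly into its min-version.
    \item The Howard and optimistic policy min-operators coincide with the dual $H$ and $W$, since they use the same greedy selections.
    \item The dual set $V_U$, namely $\{v : v \succeq\!\!\!\!\!\!\;\;\; \ldots\}$ with $v \preceq^\partial Tv$, is exactly $V_D = \{v : Tv \preceq v\}$.
    \item Convergence $\uparrow$ in $V^\partial$ means increasing in $\succeq$ with supremum in $V^\partial$. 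That is, $T^n v$ is decreasing in $\preceq$ with infimum $v^*$, which is exactly $T^n v \downarrow v^*$.
\end{itemize}
This yields parts (i) and (ii).

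\textbf{Expected obstacle.} The main care is bookkeeping: checking that every derived object (the greedy sets, $V_G$, $H$, $W$ with the same fixed selection, and $V_D$) really is the dual counterpart. The convergence notion also needs checking, since $\uparrow$ is defined through suprema, not topological limits. The dual supremum of a sequence is its infimum in $\preceq$, so this holds.

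An alternative is to rerun the proof of \cref{t:pospace} with the inequalities reversed. That route uses the dual of \cref{l:posmcoc} (a net converging to $v$ from above has infimum $v$) and the dual of \cref{l:cl2}. I will mention it but rely on the duality argument.
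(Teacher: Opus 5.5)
Your proposal is correct and matches the paper's proof: pass to the dual ADP $(V^\partial, \TT)$, transfer min-regularity, global stability and the fixed point of the Bellman min-operator, apply \cref{t:pospace}, and translate the conclusions back (the paper packages this last step as \cref{t:fbk_min}, built on \cref{l:mmp}). Your explicit check that the reversed order is closed, so that $V^\partial$ is again a pospace, is a detail the paper leaves implicit and is a worthwhile addition.
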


Next we state a min-version of \cref{t:tspo}.

\begin{theorem}\label{t:mintspo}
    Let $(V, \TT)$ be min-regular and globally stable.  If, in addition,
    $(V, \TT)$ is min-order bounded and $V$ is countably Dedekind complete, then
    \begin{enumerate}
        \item the fundamental min-optimality properties hold and
        \item min-VFI, min-OPI and min-HPI all converge.
    \end{enumerate}
\end{theorem}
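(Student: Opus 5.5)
We will prove \cref{t:mintspo} by order duality: reverse the partial order on $V$ and apply \cref{t:tspo} to the resulting ADP. Let $V^{\partial} = (V, \succeq)$ denote $V$ with the same topology but the reversed order $u \preceq^{\partial} v \iff v \preceq u$.

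We first check that $V^{\partial}$ is again a pospace. If $u_\alpha \to u$, $v_\alpha \to v$ and $u_\alpha \preceq^{\partial} v_\alpha$ for all $\alpha$, then $v_\alpha \preceq u_\alpha$, so closedness of $\preceq$ gives $v \preceq u$, that is, $u \preceq^{\partial} v$. Countable Dedekind completeness is symmetric under reversal, since suprema and infima swap. Each $T_\sigma$ remains order preserving for $\preceq^{\partial}$. Global stability involves only the topology and fixed points, so it is unchanged. Hence $(V^{\partial}, \TT)$ is a globally stable ADP on a countably Dedekind complete pospace.

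Next we translate the remaining hypotheses and the objects they generate. A policy $\sigma$ is $v$-min-greedy for $(V,\TT)$ exactly when it is $v$-greedy for $(V^{\partial},\TT)$, so min-regularity becomes regularity. The min-order bound $b$, with $b \preceq T_\sigma b$ for all $\sigma$, satisfies $T_\sigma b \preceq^{\partial} b$, which is order boundedness of $(V^{\partial}, \TT)$. The dictionary then runs as follows:
\begin{itemize}
    \item the Bellman min-operator of $(V,\TT)$ is the Bellman max-operator of $(V^{\partial},\TT)$, since infima in $V$ are suprema in $V^{\partial}$;
    \item $V_\Sigma$ is unchanged, and the min-value function $v^*$ is the value function of the dual;
    \item min-optimal policies are optimal policies of the dual, and the same holds for the principle of optimality, the two operators $H$ and $W$, and the set $V_D$, which becomes $V_U$ of the dual;
    \item $T^n v \downarrow v^*$ in $V$ is $T^n v \uparrow v^*$ in $V^{\partial}$, and likewise for $W^n v$ and $H^n v$.
\end{itemize}

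With these identifications, \cref{t:tspo} applied to $(V^{\partial},\TT)$ yields the fundamental optimality properties and convergence of VFI, OPI and HPI for the dual. These statements are verbatim the fundamental min-optimality properties and the convergence of min-VFI, min-OPI and min-HPI for $(V,\TT)$.

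The main obstacle is bookkeeping rather than mathematics. We must confirm that every notion used inside the proof of \cref{t:tspo}, and inside the results it relies on (\cref{t:pospace}, \cref{l:pspace}, \cref{l:cl2}, \cref{l:posmcoc}, and Theorem~5.1 and Corollary~6.3 of \cite{sargent2025partially}), is defined purely in terms of $\preceq$, suprema and infima, and the topology. If so, each notion dualizes correctly; for example, order stability is symmetric under reversal because its conditions (i) and (ii) swap.

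If we preferred not to rely on this meta-argument, we could instead rerun the proof of \cref{t:tspo} directly. Starting from $v \in V_\Sigma$, we would get $b \preceq v$ by order stability and $Tv \preceq v$, so $T$ maps $[b, v]$ into itself. The sequence $T^n v$ then decreases to some $\bar v$ by countable Dedekind completeness. Sandwiching $\bar v$ between $v_\sigma$ for a $\bar v$-min-greedy $\sigma$ and the iterates $T_\sigma^n v \downarrow v_\sigma$ (using the dual of \cref{l:posmcoc}) shows that $\bar v$ is a fixed point. Finally, \cref{t:minpospace} (itself the dual of \cref{t:pospace}) finishes the proof.
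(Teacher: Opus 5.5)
Your proposal is correct and is essentially the paper's proof: the paper also passes to the dual ADP $(V,\TT)^\partial$, transfers min-regularity, min-order boundedness, global stability and countable Dedekind completeness via \cref{l:mmp} and \cref{l:cdcdual}, applies \cref{t:tspo}, and translates the conclusions back with \cref{t:fbk_min}. You also check explicitly that the reversed order is still closed, so that $V^\partial$ is a pospace as \cref{t:tspo} requires, which the paper leaves implicit; one small point in your optional direct rerun is that $T_\sigma^n v$ need not be monotone, so you should invoke only $T_\sigma^n v \to v_\sigma$ together with the dual of \cref{l:posmcoc}, rather than claiming $T_\sigma^n v \downarrow v_\sigma$.
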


A min-version of \cref{t:contract} can also be stated, but we omit it for
brevity.

Below we show several applications of these results.

\section{Applications}\label{s:a}

In this section we apply the preceding results to the four applications
described in Section~\ref{s:intro}: weak Feller MDPs, structural estimation,
no-discount optimal stopping, and Bayesian sequential analysis.

\subsection{Contractive Problems}\label{ss:cps}


This subsection presents two contractive applications to demonstrate that our
theory captures these cases.  The first is very standard, involving MDPs with
constant discount factors and useful continuity properties.  The second is
contractive but nonstandard.  It considers the optimization component (but not
the estimation component) of a structural estimation problem, where the
maximization operator is shifted inside the expectation operator.

\subsubsection{Weak Feller MDPs}\label{sss:mdp}

We begin with a simple example involving MDPs
(see, e.g., \cite{hernandez2012discrete, bauerle2011markov}). 
We maximize $\EE \sum_{t \geq 0} \beta^t r(X_t, A_t)$ where 
$X_t$ takes values in state space $\Xsf$, $A_t$ takes values in action space $\Asf$, $r$ is a reward function
defined on $\Xsf \times \Asf$ and $\beta \in (0, 1)$ is a discount factor.
Here we consider a general case where $\Xsf$ is a separable metric space with Borel sets $\bB$,
and $\Asf$ is a separable metric space.  The set of feasible actions
at state $x$ is denoted by $\Gamma(x)$, where $\Gamma$ is a nonempty
correspondence from $\Xsf$ to $\Asf$.
The set $\Gsf \coloneq \setntn{(x, a) \in \Xsf \times \Asf}{a \in
\Gamma(x)}$ denotes the feasible state-action pairs. 
Let $P \colon \Gsf \times \bB \to [0, 1]$ provide transition probabilities for the next
period states given current state and action.
The Bellman equation  is 
\begin{equation}\label{eq:mdp_bell}
    v(x) = \max_{a \in \Gamma(x)} 
    \left\{
        r(x, a) + \beta \int v(x') P(x, a, \diff x')
    \right\}
    \qquad\qquad (x \in \Xsf).
\end{equation}
A feasible policy is a Borel measurable map $\sigma \colon \Xsf \to \Asf$ such that
$\sigma(x) \in \Gamma(x)$ for all $x \in \Xsf$.  Let $\Sigma$ be the set of all feasible
policies.  We combine $b\Xsf$ (the set of all bounded and Borel measurable real-valued
functions on $\Xsf$) with the pointwise partial order $\leq$ and the supremum norm.
Since the pointwise partial order is closed with respect to the topology of the
supremum norm, $b\Xsf$ is a partially ordered metric space in the sense of
Section~\ref{ss:mp}.  (The same is true for the other value spaces used in
the remainder of Section~\ref{s:a}, all of which combine a set of bounded
measurable functions with the pointwise partial order and the supremum norm.) For $\sigma \in
\Sigma$ and $v \in b\Xsf$, we define the MDP policy operator
\begin{equation*}
    (T_\sigma \, v)(x) 
        = r(x, \sigma(x)) + \beta \int v(x') P(x, \sigma(x), \diff x')
        \qquad\qquad (x \in \Xsf).
\end{equation*}

We impose some regularity conditions.

\begin{assumption}\label{a:mdp}
    The following conditions hold:
    \begin{enumerate}
        \item The reward function $r \colon \Xsf \times \Asf \to \RR$ is bounded and continuous on $\Gsf$,
        \item The feasible correspondence $\Gamma \colon \Xsf \to \Asf$ is compact-valued and continuous,
        and
        \item The transition kernel $P$ is weak Feller, i.e., the map
        \begin{equation*}
            (x, a) \mapsto \int h(x') P(x, a, \diff x')
        \end{equation*}
        is bounded and continuous whenever function $h$ is bounded and continuous on $\Xsf$.
    \end{enumerate}
\end{assumption}

With $\TT_M \coloneq \setntn{T_\sigma}{\sigma \in \Sigma}$, the pair $(b\Xsf,
\TT_M)$ is an ADP, since each $T_\sigma$ is an order preserving self-map on $b\Xsf$.  
If $v_\sigma$ is the (unique) fixed point of $T_\sigma$,
then $v_\sigma(x)$ represents the lifetime value of using policy
$\sigma$ in every period, conditional on starting in state $x$ 
(see \cite{sargent2025dynamic}, Section~3.1, or \cite{puterman2005markov}).
Moreover, $v$ satisfies the ADP Bellman equation $T v = v$ if and only if the traditional Bellman
equation~\eqref{eq:mdp_bell} holds
(see, e.g., Example~3.1 of \cite{sargent2025partially}).

Let $bc\Xsf$ be the subset of $b\Xsf$ consisting of all continuous functions in $b\Xsf$.  We can now state the following result.

\begin{theorem}\label{t:mdp}
    If Assumption~\ref{a:mdp} holds, then for the ADP $(b\Xsf, \TT_M)$,
    \begin{enumerate}
        \item the fundamental optimality properties hold,
        \item the value function $v^*$ lies in $bc\Xsf$, and
        \item VFI is geometrically convergent on $bc\Xsf$.
    \end{enumerate}
\end{theorem}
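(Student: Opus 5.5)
The plan is to apply \cref{t:contract} with $V = b\Xsf$, equipped with the pointwise order and the supremum norm $d(v,w) = \|v - w\|_\infty$, and with $V_0 = bc\Xsf$. The three conclusions of \cref{t:mdp} are then exactly conclusions (i)--(iii) of \cref{t:contract}. So it suffices to check the hypotheses: $b\Xsf$ is a complete partially ordered metric space; $d$ is sup-nonexpansive; each $T_\sigma$ is a contraction of modulus $\beta$; $bc\Xsf$ is closed; and $(b\Xsf, \TT_M)$ is $bc\Xsf$-semi-regular.

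The routine steps come first. Completeness of $b\Xsf$ under the sup norm is standard, and closedness of the pointwise order was noted in the text. The set $bc\Xsf$ is closed because uniform limits of continuous functions are continuous. For sup-nonexpansiveness, suppose $\bigvee_\alpha v_\alpha$ and $\bigvee_\alpha w_\alpha$ exist in $b\Xsf$. Fix $x$ and use $v_\alpha(x) \leq w_\alpha(x) + \bigvee_\alpha \|v_\alpha - w_\alpha\|_\infty$. Then $w' := \bigvee_\alpha w_\alpha + c$, with $c$ the constant $\bigvee_\alpha \|v_\alpha - w_\alpha\|_\infty$, is an upper bound of $(v_\alpha)$ in $b\Xsf$, so $\bigvee_\alpha v_\alpha \leq w'$. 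This needs $c<\infty$; if $c=\infty$ the inequality is trivial. By symmetry we get $\|\bigvee v_\alpha - \bigvee w_\alpha\|_\infty \leq c$. Note the argument uses only the lattice supremum in $b\Xsf$, which may differ from the pointwise supremum, but translation by constants still works. The contraction property is immediate: $|T_\sigma v(x) - T_\sigma w(x)| \leq \beta \int |v - w| \, P(x,\sigma(x),\diff x') \leq \beta \|v-w\|_\infty$. Each $T_\sigma$ maps $b\Xsf$ into itself because $r$ is bounded and $x \mapsto r(x,\sigma(x))$ and $x \mapsto \int v \, \diff P(x,\sigma(x),\cdot)$ are Borel, as compositions of Borel maps with a measurable kernel.

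The main step is semi-regularity: $bc\Xsf \subset V_G$ and $T(bc\Xsf) \subset bc\Xsf$. Fix $v \in bc\Xsf$ and set $g(x,a) := r(x,a) + \beta \int v(x') P(x,a,\diff x')$. By \cref{a:mdp}(i) and (iii), $g$ is bounded and continuous on $\Gsf$. Since $\Gamma$ is compact-valued and continuous, Berge's maximum theorem yields two facts. First, $\bar v(x) := \max_{a \in \Gamma(x)} g(x,a)$ is continuous, hence $\bar v \in bc\Xsf$. Second, the argmax correspondence is nonempty, compact-valued and upper hemicontinuous, hence Borel measurable as a correspondence. A measurable selection theorem (Kuratowski--Ryll-Nardzewski, using separability of $\Asf$) then gives a Borel $\sigma \in \Sigma$ with $\sigma(x)$ in the argmax for each $x$. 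For every $\tau \in \Sigma$ we have $T_\tau v \leq T_\sigma v = \bar v$ pointwise, so $\sigma$ is $v$-greedy and $v \in V_G$. Moreover $T_\sigma v$ is the greatest element of $\{T_\tau v\}$, hence their supremum in $b\Xsf$, so $Tv = \bar v \in bc\Xsf$.

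I expect the main obstacle to be the measurable selection step, since it requires citing the appropriate selection theorem for upper hemicontinuous compact-valued correspondences into a separable metric space. Everything else is direct verification. With all hypotheses in place, \cref{t:contract} delivers (i)--(iii).
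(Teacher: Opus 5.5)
Your proposal is correct and follows the paper's proof essentially step for step: you reduce to \cref{t:contract} with $V_0 = bc\Xsf$, treat closedness, completeness, sup-nonexpansiveness and contractivity as routine, and obtain $bc\Xsf$-semi-regularity from Berge's maximum theorem plus a Borel measurable selection of a greedy policy. The only difference is that the paper gets the selector directly from the measurable maximum theorem (Theorem~18.19 of \cite{aliprantis2006border}), which is stated for separable metrizable action spaces. If you use Kuratowski--Ryll-Nardzewski instead, its standard form needs a Polish target; you can arrange this by viewing the compact-valued argmax correspondence as taking values in the completion of $\Asf$.
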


\begin{proof}
    It is trivial that $bc\Xsf \subset b\Xsf$ is closed,  
    the supremum norm is complete and sup-nonexpansive on $b\Xsf$, 
    and each $T_\sigma$ is a contraction of modulus $\beta$ on $b\Xsf$. 
    In view of \cref{t:contract}, we only need to show that
    $(b\Xsf, \TT_M)$ is $bc\Xsf$-semi-regular.
    Fix $v \in bc\Xsf$.  By (i) and (iii) of Assumption~\ref{a:mdp}, the map
    \begin{equation*}
        (x, a) \mapsto r(x, a) + \beta \int v(x') P(x, a, \diff x')
    \end{equation*}
    is continuous, so, by (ii) of
    Assumption~\ref{a:mdp} and the Berge
    maximum theorem (see Theorem~17.31 of \cite{aliprantis2006border}), the function
    $T v$ given by
    \begin{equation*}
        (T v)(x) = \max_{a \in \Gamma(x)} \left\{
        r(x, a) + \beta \int v(x') P(x, a, \diff x')
        \right\}
    \end{equation*}
    is well-defined and continuous on $\Xsf$. Hence $T(bc\Xsf) \subset bc\Xsf$.
    By the measurable maximum theorem (see Theorem~18.19 of \cite{aliprantis2006border}),
    there exists a
    measurable function $\sigma \colon \Xsf \to \Asf$ such that 
    \begin{equation*}
        \sigma(x) \in \argmax_{a \in \Gamma(x)} \left\{
        r(x, a) + \beta \int v(x') P(x, a, \diff x')
        \right\} \text{ for all } x \in \Xsf.
    \end{equation*}
    For this $\sigma$ and any other $\tau \in \Sigma$, we have $(T_\tau v)(x)
    \leq (T_\sigma v)(x)$ for all $x \in \Xsf$. In particular, $\sigma$ is $v$-greedy.
\end{proof}

\subsubsection{Structural Estimation}

As discussed in Section~\ref{s:intro}, some dynamic programs reverse the order
of maximization and expectation, placing them outside both standard and
generalized contractive frameworks. At
the same time, they are easily expressed as ADPs in the sense defined above, and
our theory is straightforward to apply.

The set up we consider here involves the optimization step of a structural
estimation problem for discrete choice models (see, e.g., \cite{rust1987optimal,
rust1994structural, kristensen2021solving} and Example~3.6 of
\cite{sargent2025partially}). The Bellman equations take the form
\begin{equation}\label{eq:se_bell}
    g(x, a) = \int \int \left\{
    \max_{a' \in \Asf} [r(x', a', e') + \beta g(x', a')]
    \right\} \nu(\diff e') P(x, a, \diff x').
\end{equation}
The setting is similar to Section~\ref{sss:mdp}. Here the action space $\Asf$ is finite 
(hence discrete choice), and the component $e$ in the reward function takes values in some
measurable space $\Esf$ and is {\sc iid} with distribution $\nu$.
Since the current shock is observed before the current action is chosen,
policies are permitted to depend on it.  Hence
we take $\Sigma$ to be the set of measurable maps from $\Xsf \times \Esf$ to $\Asf$
and, for each $\sigma \in \Sigma$, we introduce the policy operator
\begin{equation*}
    (T_\sigma g)(x, a)
    = \int \int
    \left[ r(x', \sigma(x', e'), e') + \beta g(x', \sigma(x', e')) \right]
    \nu(\diff e') P(x, a, \diff x').
\end{equation*}
Assume that the reward function $r$ is bounded and measurable. 
We take the value space to be $b\Gsf$, the set of bounded and Borel measurable
real-valued functions on $\Gsf$, paired
with the pointwise partial order and the supremum norm.
(Every action is feasible in the present setting, so the set of feasible
state-action pairs is $\Gsf = \Xsf \times \Asf$.)

\begin{lemma}\label{l:se}
    The pair $(b\Gsf, \TT_S)$ is a regular ADP.
\end{lemma}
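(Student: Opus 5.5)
We need to verify two things. First, each $T_\sigma$ is an order preserving self-map on $b\Gsf$, so that $(b\Gsf, \TT_S)$ is an ADP. Second, for each $g \in b\Gsf$ a $g$-greedy policy exists.

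\emph{ADP property.} Fix $\sigma \in \Sigma$ and $g \in b\Gsf$. The integrand
\[
    h_\sigma(x', e') \coloneq r(x', \sigma(x', e'), e') + \beta g(x', \sigma(x', e'))
\]
is a composition of measurable maps, because $(x',e') \mapsto (x', \sigma(x',e'), e')$ is measurable. It is bounded, with $|h_\sigma| \leq \|r\|_\infty + \beta \|g\|_\infty$. Hence $x' \mapsto \int h_\sigma(x', e') \nu(\diff e')$ is bounded and measurable by Fubini--Tonelli on the product $\Xsf \times \Esf$. Integrating against the kernel $P(x,a,\diff x')$ then gives a bounded, Borel measurable function of $(x,a)$. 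So $T_\sigma g \in b\Gsf$. Order preservation is immediate: if $g \leq g'$ pointwise, then $h_\sigma$ increases pointwise, and both integrals are monotone.

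\emph{Regularity.} Fix $g \in b\Gsf$. Since $\Asf$ is finite, enumerate $\Asf = \{a_1, \dots, a_K\}$ and define $\sigma(x', e')$ as the smallest-index maximizer of $a' \mapsto r(x', a', e') + \beta g(x', a')$. The key step, and the only mild obstacle, is measurability of this selection. For each $k$, the set $\{\sigma = a_k\}$ is the intersection of finitely many sets of the form
\[
    \{\phi_k > \phi_j\} \;\; (j<k)
    \qquad \text{and} \qquad
    \{\phi_k \geq \phi_j\} \;\; (j>k),
\]
where $\phi_j(x',e') \coloneq r(x',a_j,e') + \beta g(x',a_j)$. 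Each $\phi_j$ is measurable, so each such set is measurable, and therefore $\sigma \in \Sigma$.

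For this $\sigma$ and any $\tau \in \Sigma$, we have pointwise in $(x',e')$
\[
    r(x', \tau(x',e'), e') + \beta g(x', \tau(x',e'))
    \leq \max_{a'} \phi(x',e',a')
    = h_\sigma(x',e').
\]
Integrating with respect to $\nu$ and then $P(x,a,\cdot)$, and using monotonicity of integrals, gives $T_\tau g \leq T_\sigma g$ on $\Gsf$. Hence $\sigma$ is $g$-greedy, so $V_G = b\Gsf$ and the ADP is regular. As a byproduct, $Tg$ exists and coincides with the right-hand side of \eqref{eq:se_bell}.
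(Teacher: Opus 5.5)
Your proof is correct and has the same structure as the paper's. You check that each $T_\sigma$ maps $b\Gsf$ into itself and is monotone, then build a $g$-greedy policy by maximizing $r(x',a',e') + \beta g(x',a')$ pointwise in $(x',e')$ and integrate the resulting pointwise inequality.

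The one real difference is the measurable-selection step. The paper applies the measurable maximum theorem (Theorem~18.19 of Aliprantis and Border) to $\hat r(x,e,a)$, noting that continuity in $a$ is automatic because $\Asf$ is finite. You instead exhibit the selector directly by smallest-index tie-breaking. You then verify measurability by writing each level set $\{\sigma = a_k\}$ as a finite intersection of sets $\{\phi_k > \phi_j\}$ and $\{\phi_k \geq \phi_j\}$.

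Your route is fully elementary, and it gives a canonical, choice-free rule $g \mapsto \sigma_g$. That is exactly the kind of simple selection mechanism the paper's footnote has in mind for making $H$ and $W$ well defined without the Axiom of Choice. The paper's route reuses the tool it needs anyway for the weak Feller MDP, where $\Gamma(x)$ can be infinite and finite tie-breaking is unavailable.

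You also fill in the measurability details of the self-map claim that the paper states in one sentence:
\begin{itemize}
\item joint measurability of $(x',e') \mapsto (x',\sigma(x',e'),e')$,
\item measurability of the $\nu$-section integral,
\item integration against the kernel $P$.
\end{itemize}
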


\begin{proof}
    The arithmetic and integral operations in the definition of $T_\sigma$
    preserve boundedness and measurability, so each $T_\sigma$ is a self-map on $b\Gsf$.
    Clearly, $T_\sigma$ is order preserving. 
    Hence, with $\TT_S \coloneq \{T_\sigma \colon \sigma \in \Sigma\}$, 
    the pair $(b\Gsf, \TT_S)$ is an ADP. 
    
    Regarding regularity, fix $g \in b\Gsf$. Let
    \begin{equation*}
        \hat r(x, e, a) \coloneq r(x, a, e) + \beta g(x, a).
    \end{equation*}
    Clearly, the function $\hat r$ is bounded and measurable.
    Since $\Asf$ is finite, $\hat r$ is also continuous on $\Asf$.
    By the measurable maximum theorem (see Theorem~18.19 of \cite{aliprantis2006border}),
    there exists a measurable function $\sigma \colon \Xsf \times \Esf \to \Asf$ such that
    $\sigma(x, e) \in \argmax_{a \in \Asf} \hat r(x, e, a)$ for all
    $(x, e) \in \Xsf \times \Esf$.
    For this $\sigma$ and any other $\tau \in \Sigma$, we have $(T_\tau \, g)(x, a) \leq
    (T_\sigma \, g)(x, a)$ for each $(x, a) \in \Gsf$.
    In particular, $\sigma$ is $g$-greedy. Hence, $(b\Gsf, \TT_S)$ is regular.
\end{proof}

When $\sigma$ is the $g$-greedy policy constructed in the proof of \cref{l:se},
the maximization in the definition of $\sigma$ yields
\begin{equation*}
    (T g)(x, a) = (T_\sigma \, g)(x, a)
    = \int \int \left\{
    \max_{a' \in \Asf} [r(x', a', e') + \beta g(x', a')]
    \right\} \nu(\diff e') P(x, a, \diff x').
\end{equation*}
In particular, $g \in b\Gsf$ satisfies the ADP Bellman equation $Tg = g$ if and
only if $g$ solves the Bellman equation \eqref{eq:se_bell}.

We can now state the following result.

\begin{theorem}\label{t:se}
    For the ADP $(b\Gsf, \TT_S)$, the fundamental optimality properties hold,
    VFI is geometrically convergent on $b\Gsf$, and OPI and HPI also converge.
\end{theorem}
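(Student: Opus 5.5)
The plan is to apply \cref{t:contract} with $V_0 = b\Gsf$, the whole value space. That result then delivers everything claimed except OPI and HPI convergence, and those follow from its final clause because \cref{l:se} already supplies regularity. So the work reduces to checking the hypotheses of \cref{t:contract} for $(b\Gsf, \TT_S)$ under the supremum norm and pointwise order.

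First, $V_0 = b\Gsf$ is trivially closed in itself. The supremum norm on $b\Gsf$ is complete, since uniform limits of bounded measurable functions are bounded and measurable. It is also sup-nonexpansive: whenever the pointwise suprema $\bigvee_\alpha v_\alpha$ and $\bigvee_\alpha w_\alpha$ exist in $b\Gsf$, we have, at each point $(x,a)$, $|\sup_\alpha v_\alpha(x,a) - \sup_\alpha w_\alpha(x,a)| \leq \sup_\alpha |v_\alpha(x,a) - w_\alpha(x,a)| \leq \sup_\alpha \|v_\alpha - w_\alpha\|$. One point needs a little care. The supremum in the lattice $b\Gsf$ need not coincide with the pointwise supremum for arbitrary families, because measurability can fail. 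I would handle this exactly as the paper treats the MDP case in \cref{t:mdp}, where it calls the property ``trivial''. In our application the suprema actually used are attained by a greedy policy, so they agree with pointwise maxima.

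Second, $V_0$-semi-regularity. Here $V_0 = b\Gsf = V_G$ by \cref{l:se}, and $T$ maps $b\Gsf$ into itself because $Tg = T_\sigma g$ for a $g$-greedy $\sigma$, and $T_\sigma$ is a self-map on $b\Gsf$.

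Third, each $T_\sigma$ is a contraction of modulus $\beta$. For $g, h \in b\Gsf$, the reward terms cancel, leaving
\begin{equation*}
    |(T_\sigma g)(x,a) - (T_\sigma h)(x,a)| \leq \beta \int\int |g(x',\sigma(x',e')) - h(x',\sigma(x',e'))| \, \nu(\diff e') P(x,a,\diff x') \leq \beta \|g - h\|,
\end{equation*}
using that $\nu$ and $P(x,a,\cdot)$ are probability measures.

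With these hypotheses in place, \cref{t:contract} yields the fundamental optimality properties and geometric convergence of VFI on $b\Gsf$. Since $(b\Gsf, \TT_S)$ is regular by \cref{l:se}, the last clause of \cref{t:contract} gives convergence of OPI and HPI. The only step I expect to require any attention is the sup-nonexpansiveness point about which suprema are meant, and it is resolved by the pointwise inequality above.
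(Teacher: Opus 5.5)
Your proposal is correct and matches the paper's proof: the paper likewise cites \cref{l:se} for regularity, notes that the supremum norm is complete and sup-nonexpansive and that each $T_\sigma$ is a contraction of modulus $\beta$, and then applies \cref{t:contract}, implicitly with $V_0 = b\Gsf$. Your restriction of sup-nonexpansiveness to attained suprema is unnecessary. Singletons in $\Gsf$ are Borel, so any least upper bound in $b\Gsf$ must equal the pointwise supremum; otherwise lowering it at a single point would give a smaller bounded measurable upper bound. Hence your pointwise inequality already verifies the property for arbitrary families.
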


\begin{proof}
    By \cref{l:se}, $(b\Gsf, \TT_S)$ is a regular ADP. 
    It is trivial that the supremum norm is complete
    and sup-nonexpansive, and each $T_\sigma$ is a contraction of modulus $\beta$. 
    All claims follow from \cref{t:contract}.
\end{proof}

\subsection{No-Discount Optimal Stopping}\label{ss:nodos}

Many important applications---including sampling problems, shortest path and
routing problems, bandit problems, and reinforcement learning tasks---involve no
discounting.  In the absence of a discount factor, contractivity of the policy
operators typically fails.  In this section we study one example of a
non-contractive model: optimal stopping without discounting.  (For background
and connections to the related literature, see the introduction.)
Later, in Section~\ref{ss:sar}, we apply the optimal stopping results
obtained below to a classical problem in sequential analysis.

\subsubsection{Setup}

Let $(\Xsf, \bB)$ be a measurable space and let $P$ be a stochastic kernel on
$(\Xsf, \bB)$. In what follows, a discrete time $\Xsf$-valued stochastic process
$(X_t)_{t \geq 0}$ on probability space $(\Omega, \fF, \PP_x)$ will be called
\navy{$P$-Markov} if 
\begin{equation*}
    \PP \{X_{t+1} \in B \given \fF_t\} = P(X_t, B)
    \quad \text{$\PP$-a.s.\ for all $t \geq 0$ and $B \in \bB$. }
\end{equation*}
We write $\PP_x$ and $\EE_x$ for probabilities and expectations when
conditioning on $X_0 = x$.
We will also use the time-shift form of the Markov property: for any
$\bB^{\otimes \NN}$-measurable random variable $H$ depending on the trajectory
$(X_0, X_1, \ldots)$ and any $t \geq 0$,
\begin{equation}\label{eq:markovprop}
    \EE_x \left[ H \circ \theta^t \given \fF_t \right] = \EE_{X_t} [ H ],
\end{equation}
where $\theta \colon (x_0, x_1, \ldots) \mapsto (x_1, x_2, \ldots)$ is the
shift operator on $\Xsf^{\NN}$ (see, e.g., Chapter~3 of \cite{meyn2009markov}).

Let $b\Xsf$ denote the set of bounded $\bB$-measurable functions from $\Xsf$
to $\RR$, and let $b\Xsf_+$ be all $g \in b\Xsf$ taking only nonnegative values.
We consider a cost minimization problem with Bellman equation
\begin{equation}\label{eq:bell}
    g(x) = \min
    \left\{
        e(x), c(x) + \int g(x') P(x, \diff x')
    \right\},
\end{equation}
where $e, c$ are functions in $b\Xsf_+$, while 
$P$ is a stochastic kernel on $\Xsf$.
The function $e$ is called the \navy{exit cost function} and $c$
is called the \navy{flow cost function}.
The Bellman equation corresponds to a setting where a controller observes a
$P$-Markov state process $(X_t)_{t \geq 0}$ and decides when to stop.
Stopping at time $t$ incurs the one-off penalty $e(X_t)$.
Continuing incurs the flow cost $c(X_t)$, followed by transition to the new
state $X_{t+1}$ and the opportunity to decide again.

\subsubsection{Policies}

A policy is a $\bB$-measurable map $\sigma$ from $\Xsf$ to $\{0, 1\}$,
with $\sigma(x) = 1$ indicating the decision to stop in state $x$.  Given a
policy $\sigma$, we call
\begin{equation*}
    E_\sigma \coloneq \setntn{x \in \Xsf}{\sigma(x) = 1}
\end{equation*}
the \navy{exit region} for $\sigma$.  We call its complement $E_\sigma^c$ the
\navy{continuation region}.  

To each policy $\sigma$, we associate the \navy{stopping time}
\begin{equation}\label{eq:tau}
    \tau^\sigma 
    \coloneq \inf\setntn{t \geq 0}{X_t \in E_\sigma}
    = \inf\setntn{t \geq 0}{\sigma(X_t) = 1}.
\end{equation}
Here and below, the convention for the infimum is that $\inf \varnothing \coloneq \infty$. 
Also, given $\sigma$, we define the \navy{$\sigma$-loss function} via
\begin{equation}\label{eq:lcf}
    g_\sigma(x) \coloneq
        \EE_x
        \left[
            \sum_{t=0}^{\tau^\sigma-1} c(X_t) + e(X_{\tau^\sigma})
        \right].
\end{equation}
Here $\sum_{t=0}^{-1} c(X_t)$ is understood as $0$, so that $g_\sigma(x) = e(x)$
when $x \in E_\sigma$. The function $g_\sigma$ takes values in $[0, \infty]$ and $g_\sigma(x)$ represents the
total expected cost when applying $\sigma$ in every period, conditional on
starting in state $x$.

\subsubsection{The Lower Bound Policy}\label{sss:lbp}

One policy of particular interest is $\bar \sigma = \1\{e \leq c\}$.  To
simplify notation, the exit region for this policy is denoted
\begin{equation}\label{eq:cerex}
    \bar E \coloneq E_{\bar \sigma} = \setntn{x \in \Xsf}{e(x) \leq c(x)}
\end{equation}
and the stopping time is denoted
\begin{equation}\label{eq:bartau}
    \bar \tau 
    \coloneq \tau^{\bar \sigma}
    = \inf\setntn{t \geq 0}{\bar \sigma(X_t) = 1}.
\end{equation}
We call $\bar E$ the \navy{certain exit region}. Under the innocuous convention that the
controller always stops when indifferent between stopping and continuing, any
optimal policy will choose to stop when $x \in \bar E$. The reason is that
the controller has the opportunity to exit at cost $e(x) \leq c(x)$, and
continuing incurs $c(x)$ plus additional costs in subsequent stages.

The fact that the controller always stops when $x \in \bar E$ allows us to shrink the
policy space.  Specifically, we consider only policies where $\sigma(x) = 1$ for
all $x \in \bar E$. Let $\Sigma$ be the set of all such policies.
We can also express this set via
\begin{equation}\label{eq:ndsig}
    \Sigma \coloneq 
    \{
        \text{all $\bB$-measurable $\sigma \colon \Xsf \to \{0,1\}$
        with $\bar \sigma \leq \sigma$}
    \}.
\end{equation}
Since $\bar \sigma \leq \sigma$ for all $\sigma \in \Sigma$, we refer to $\bar
\sigma$ as the \navy{lower bound policy}.
Also, since
\begin{equation}\label{eq:sbou}
    \bar \sigma \leq \sigma \; \implies \; \tau^\sigma \leq \bar \tau \;\;
    \PP\text{-a.s.},
\end{equation}
we refer to $\bar \tau$ as the \navy{upper bound stopping time}.

Our key assumption is as follows.

\begin{assumption}\label{a:cap}
    The upper bound stopping time $\bar \tau$ obeys
    \begin{equation*}
        \sup_{x \in \Xsf} \EE_x \bar \tau < \infty.
    \end{equation*}
\end{assumption}

For \cref{a:cap}, it suffices to check that 
$\sup_{x \in \bar E^c} \EE_x \bar \tau$ is finite, since
$\bar \tau = 0$ with probability one when $x \in \bar E$.
Below we show that \cref{a:cap} is sufficient for all of the major
optimality results associated with dynamic programming.

\begin{lemma}\label{l:fomc}
     When \cref{a:cap} holds, the $\sigma$-loss function
     \eqref{eq:lcf} is finite and bounded on $\Xsf$ for all $\sigma \in \Sigma$.     
\end{lemma}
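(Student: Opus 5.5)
The plan is to bound $g_\sigma$ directly from its definition \eqref{eq:lcf}, using the almost sure comparison \eqref{eq:sbou} between $\tau^\sigma$ and the upper bound stopping time $\bar\tau$. Fix $\sigma \in \Sigma$ and $x \in \Xsf$. Since $c, e \in b\Xsf_+$, set $\|c\| \coloneq \sup_x c(x)$ and $\|e\| \coloneq \sup_x e(x)$, both finite. Because every $\sigma \in \Sigma$ satisfies $\bar\sigma \leq \sigma$, \eqref{eq:sbou} gives $\tau^\sigma \leq \bar\tau$ $\PP_x$-a.s. By \cref{a:cap}, $\EE_x \bar\tau \leq M \coloneq \sup_{y} \EE_y \bar\tau < \infty$, so $\bar\tau < \infty$ $\PP_x$-a.s., and therefore $\tau^\sigma < \infty$ $\PP_x$-a.s. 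This shows that $X_{\tau^\sigma}$ is well defined almost surely, so the integrand in \eqref{eq:lcf} is an a.s.\ finite nonnegative random variable and the expectation makes sense in $[0,\infty]$.

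Next I would bound the integrand pathwise. Since $c \geq 0$ and $c \leq \|c\|$, we have
\begin{equation*}
    0 \leq \sum_{t=0}^{\tau^\sigma - 1} c(X_t) + e(X_{\tau^\sigma})
    \leq \|c\| \, \tau^\sigma + \|e\|
    \leq \|c\| \, \bar\tau + \|e\|
    \quad \PP_x\text{-a.s.}
\end{equation*}
Taking expectations and using monotonicity of the integral for nonnegative variables gives
\begin{equation*}
    0 \leq g_\sigma(x) \leq \|c\| \, \EE_x \bar\tau + \|e\| \leq \|c\| M + \|e\|.
\end{equation*}
The right-hand side does not depend on $x$, nor on $\sigma$, so $g_\sigma$ is finite and uniformly bounded on $\Xsf$. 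The bound is in fact uniform over all of $\Sigma$, which is likely to be useful later.

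The main obstacle is the measure-theoretic detail rather than the estimate itself. I need to check that $\tau^\sigma$ is a genuine stopping time and that the integrand is measurable. Both follow because $E_\sigma \in \bB$ when $\sigma$ is measurable, so $\{\tau^\sigma \leq t\}$ lies in $\fF_t$. I also need to handle the null set where $\tau^\sigma = \infty$, which is harmless since it has probability zero.

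The statement asserts only that $g_\sigma$ is finite and bounded, so measurability of $x \mapsto g_\sigma(x)$ is not required here. If it were needed, it would follow from the Markov structure: $g_\sigma(x) = \EE_x H$ for a $\bB^{\otimes\NN}$-measurable trajectory functional $H$, and such expectations are measurable in $x$.
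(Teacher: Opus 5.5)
Your proposal is correct and follows essentially the same route as the paper: it bounds the integrand by $\|c\|\,\tau^\sigma + \|e\|$, uses $\tau^\sigma \leq \bar\tau$ from \eqref{eq:sbou}, and invokes \cref{a:cap} to get the uniform bound $\|c\| M + \|e\|$. Your extra remarks on almost sure finiteness of $\tau^\sigma$ and on measurability are sound details that the paper leaves implicit.
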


\begin{proof}
    Fix $\sigma \in \Sigma$.
    By \cref{a:cap}, there exists a constant $M < \infty$ with $\EE_x \bar \tau
    \leq M$ for all $x \in \Xsf$. In addition, the functions $c$ and $e$ are bounded, so we can
    take constants $N_c, N_e$ with $c \leq N_c$ and $e \leq N_e$.
    As a result,
    \begin{equation*}
        g_\sigma(x) 
        \leq N_c \, \EE_x \tau^\sigma + N_e
        \leq N_c \, \EE_x \bar \tau + N_e
        \leq N_c \, M + N_e,
    \end{equation*}
    where, in the second inequality, $\EE_x \tau^\sigma \leq \EE_x \bar \tau$ by \eqref{eq:sbou}.
\end{proof}

\subsubsection{Policy Operators}\label{sss:ndpolop}

For each $\sigma \in \Sigma$, we define a \navy{policy operator}
$T_\sigma$ via
\begin{equation}\label{eq:ndpol}
    (T_\sigma \, g)(x)  
    = \sigma(x) e(x) + (1 - \sigma(x))
        \left[c(x) + \int g(x') P(x, \diff x') \right],
\end{equation}
or, in operator notation,  as
\begin{equation}\label{eq:tsk}
    T_\sigma \, g = \sigma e + (1-\sigma) c +  K_\sigma \, g
    \qquad (g \in b\Xsf),
\end{equation}
where
\begin{equation}\label{eq:ksig}
    (K_\sigma \, g)(x) \coloneq (1 - \sigma(x)) \int g(x') P(x, \diff x')
    \qquad (x \in \Xsf).
\end{equation}
In \eqref{eq:tsk}, expressions such as $\sigma e$ are understood as pointwise
products.

As usual, the policy operator associated with $\sigma$ is introduced with the
idea that its fixed point gives the lifetime value -- in this case, the lifetime
cost -- generated by $\sigma$. This turns out to be true here as well,
although the proof is not entirely trivial. It requires some familiarity with
shift operators and the Markov property in its general form. An introduction to
these topics can be found in Chapter~3 of \cite{meyn2009markov}.

\begin{lemma}\label{l:tsfg}
    For every $\sigma \in \Sigma$, the $\sigma$-loss function $g_\sigma$  is a fixed point of $T_\sigma$.
\end{lemma}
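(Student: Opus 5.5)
We prove the identity $T_\sigma g_\sigma = g_\sigma$ pointwise by a first-step decomposition of the random sum in \eqref{eq:lcf}, splitting into the cases $x \in E_\sigma$ and $x \in E_\sigma^c$. By \cref{l:fomc}, $g_\sigma$ is finite and bounded, so all expectations below are finite. Measurability of $g_\sigma$ follows from standard arguments, since $x \mapsto \EE_x H$ is measurable for nonnegative $\bB^{\otimes\NN}$-measurable $H$. Hence $g_\sigma \in b\Xsf$ and $T_\sigma g_\sigma$ is well defined.

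\emph{Case $x \in E_\sigma$.} Here $\sigma(x)=1$ and $\tau^\sigma = 0$ $\PP_x$-a.s. The empty-sum convention then gives $g_\sigma(x) = e(x)$. On the other hand, $(T_\sigma g_\sigma)(x) = e(x)$ by \eqref{eq:ndpol}, so the two sides agree.

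\emph{Case $x \in E_\sigma^c$.} Now $\tau^\sigma \geq 1$ $\PP_x$-a.s., and we use the shift identity $\tau^\sigma = 1 + \tau^\sigma \circ \theta$ on the event $\{X_0 \notin E_\sigma\}$, which follows directly from the hitting-time definition \eqref{eq:tau}. Set
\begin{equation*}
    H \coloneq \sum_{t=0}^{\tau^\sigma - 1} c(X_t) + e(X_{\tau^\sigma}),
\end{equation*}
a nonnegative functional of the trajectory (with $H = \infty$ allowed when $\tau^\sigma = \infty$). On $\{X_0 \notin E_\sigma\}$ we then have $H = c(X_0) + H \circ \theta$. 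Taking $\EE_x$ and conditioning on $\fF_1$, the Markov property \eqref{eq:markovprop} with $t=1$ gives
\begin{equation*}
    g_\sigma(x) = c(x) + \EE_x \EE_{X_1}[H] = c(x) + \int g_\sigma(x') P(x, \diff x'),
\end{equation*}
which is $(T_\sigma g_\sigma)(x)$ because $\sigma(x) = 0$.

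The main obstacle is the rigorous justification of the shift decomposition. When $\tau^\sigma = \infty$ with positive probability, $H$ is unbounded and the term $e(X_{\tau^\sigma})$ is undefined there. Two observations handle this. First, \eqref{eq:sbou} together with \cref{a:cap} gives $\tau^\sigma \leq \bar\tau < \infty$ a.s., so the event $\{\tau^\sigma = \infty\}$ is null and can be discarded. Second, since $H \geq 0$, \eqref{eq:markovprop} applies to it in the extended sense, either via monotone convergence from truncations $H \wedge n$ or by applying it separately to the sum and exit terms. Finally, we check that $H$ is $\bB^{\otimes\NN}$-measurable. This follows by writing $H = \sum_{t} c(X_t)\1\{t < \tau^\sigma\} + \sum_t e(X_t)\1\{\tau^\sigma = t\}$, where each indicator is determined by the measurable set $E_\sigma$.
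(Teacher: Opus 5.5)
Your proof is correct and follows the paper's own argument: the trivial case on $E_\sigma$, then on $E_\sigma^c$ the first-step decomposition $H = c(X_0) + H\circ\theta$ (the paper's $\tau\circ\theta = \tau - 1$ and $X_\tau\circ\theta = X_\tau$) combined with the Markov property at time $1$. Your extra care about finiteness, measurability and the null event $\{\tau^\sigma = \infty\}$ draws on \cref{a:cap}, which the lemma's statement does not list, but this matches how the lemma is actually used (in \cref{p:nodgs}) and makes explicit what the paper's proof leaves implicit.
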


\begin{proof}
    Fix $\sigma \in \Sigma$.  To simplify notation, for the duration of this
    proof we set $\tau \coloneq \tau^\sigma$ and $E \coloneq E_\sigma$. For $x
    \in E$, we have $\sigma(x)=1$ and hence $(T_\sigma \, g_\sigma)(x) = e(x)$.
    At the same time, \eqref{eq:lcf} implies that $g_\sigma(x) =
    e(x)$ also holds for such $x$.  In particular, $T_\sigma \, g_\sigma =
    g_\sigma$ on $E$. Hence, to complete the proof, we only need to show that $T_\sigma \,
    g_\sigma = g_\sigma$ on $E^c$. 

    To this end, fix $x \notin E$ and define the random variable
    \begin{equation*}
        H \coloneq \sum_{t=0}^{\tau - 1} c(X_t) + e(X_{\tau}),
    \end{equation*}
    so that 
    \begin{equation*}
        g_\sigma(x) = \EE_x H
        \quad \text{and} \quad
        g_\sigma(X_1) = \EE_{X_1} H = \EE_x [ H \circ \theta \given X_1 ].
    \end{equation*}
    On the right-hand side, $\theta \colon (x_0, x_1, \ldots) \mapsto (x_1, x_2,
    \ldots)$ is the shift operator on the sequence space $\Xsf^{\NN}$, and the last equality is by the Markov
    property \eqref{eq:markovprop}.  We can write
    $g_\sigma(X_1)$ more explicitly by expanding $H \circ \theta$ to get
    \begin{equation*}
        g_\sigma(X_1)
        =
        \EE_x
        \left[
            \sum_{t=0}^{\tau \circ \, \theta - 1} c(X_{t+1}) + e(X_\tau \circ \theta)
            \, \given \, X_1
        \right]
        =
        \EE_x
        \left[
            \sum_{t=1}^{\tau \circ \, \theta} c(X_t) + e(X_\tau \circ \theta)
            \, \given \, X_1
        \right].
    \end{equation*}
    From the law of iterated expectations, this yields
    \begin{equation}\label{eq:exgs}
        \EE_x \, g_\sigma(X_1)
        =
        \EE_x
        \left[
            \sum_{t=1}^{\tau \circ \, \theta} c(X_t) + e(X_\tau \circ \theta)
        \right].
    \end{equation}
    Recalling that $x \in E^c$, so that $\sigma(x)=0$, we have
    \begin{equation*}
        (T_\sigma \, g_\sigma)(x)
        = c(x) + \EE_x \, g_\sigma(X_1)
        = c(x) +
        \EE_x
        \left[
            \sum_{t=1}^{\tau \circ \, \theta} c(X_t) + e(X_\tau \circ \theta)
        \right].
    \end{equation*}
    Using the fact that $x \in E^c$, so that the first visit to $E$ occurs at
    some $t \geq 1$, we obtain $\tau \circ \theta = \tau - 1$ and $X_\tau \circ \theta = X_\tau$.
    (The stopping time $\tau \circ \theta$ returns the original index minus 1
    because it counts time using the shifted sequence $(X_1, X_2, \ldots)$.  At
    the same time, $X_\tau \circ \theta$ -- i.e., the stopped value
    $X_\tau$ composed with the shift -- evaluates to $X_{(\tau \circ \theta) + 1} = X_\tau$,
    since the original-sequence and shifted-sequence stopping times pick out
    the same physical state.)   Applying these facts to the last display yields
    \begin{equation*}
        (T_\sigma \, g_\sigma)(x)
        = c(x) + 
        \EE_x 
        \left[ 
            \sum_{t=1}^{\tau - 1} c(X_t) + e(X_\tau)
        \right]
        =
        \EE_x 
        \left[ 
            \sum_{t=0}^{\tau - 1} c(X_t) + e(X_\tau)
        \right]
        = g_\sigma(x).
    \end{equation*}
    This confirms that $T_\sigma \, g_\sigma = g_\sigma$ on $E^c$, so the proof of
    \cref{l:tsfg} is done.
\end{proof}

\subsubsection{ADP Formulation}\label{sss:noadp}

Let $\TT$ be the set of all policy operators, as defined in \eqref{eq:ndpol},
indexed over the restricted policy set $\Sigma$ defined in \eqref{eq:ndsig}.
Since each $T_\sigma$ is order preserving, the pair $(b\Xsf_+, \TT)$ forms an
ADP.  (As elsewhere, $b\Xsf_+$ is paired with the pointwise partial order and
the supremum norm.)  For this ADP and given $g \in b\Xsf_+$, 
a policy $\sigma \in \Sigma$ is $g$-min-greedy when $T_\sigma g \leq T_s g$ for
all $s \in \Sigma$.  It is easy to check that one such policy always exists.
Indeed, such a policy can be found by setting
\begin{equation*}
    \sigma(x) =
    \1
    \left\{ 
        e(x) \leq  c(x) + \int g(x') P(x, \diff x') 
    \right\}
    \qquad (x \in \Xsf)
\end{equation*}
This policy is in $\Sigma$, since $\sigma$ is $\bB$-measurable and, in addition,
\begin{equation*}
    e(x) \leq  c(x) 
    \implies
    e(x) \leq  c(x) + \int g(x') P(x, \diff x'),
\end{equation*}
so that $\bar \sigma \leq \sigma$.  Moreover, it is clear that
\begin{equation*}
    (T_\sigma \, g)(x) = 
    \min
    \left\{
        e(x), c(x) + \int g(x') P(x, \diff x')
    \right\}
    \leq (T_s \, g)(x)
\end{equation*}
for all $s \in \Sigma$.

This argument also proves that the ADP $(b\Xsf_+, \TT)$ is min-regular.

In essence, a $g$-min-greedy policy treats $g$ as a loss function, using it to
assign a total expected cost to each state, and makes the current best
choice accordingly.

\subsubsection{Stability of the Policy Operators}

In this section we prove that $(b\Xsf_+, \TT)$ is globally stable.

\begin{proposition}\label{p:nodgs}
    If \cref{a:cap} holds, then every $T_\sigma$ is
    globally stable on $b\Xsf_+$.
\end{proposition}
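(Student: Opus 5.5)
Fix $\sigma \in \Sigma$. The plan is to show that the iterates $T_\sigma^n g$ converge uniformly to $g_\sigma$ for every $g \in b\Xsf_+$. This gives both existence and uniqueness of the fixed point: $g_\sigma$ is a fixed point by \cref{l:tsfg}, and any other fixed point $h$ satisfies $h = T_\sigma^n h \to g_\sigma$, so $h = g_\sigma$. First we check that $T_\sigma$ is a self-map on $b\Xsf_+$. This is immediate from \eqref{eq:tsk}, since $e, c, g$ are nonnegative, bounded and measurable and $P$ is a kernel. By \cref{l:fomc}, $g_\sigma \in b\Xsf_+$ as well.

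The key step is an explicit formula for the iterates. Iterating \eqref{eq:tsk} gives
\begin{equation*}
    T_\sigma^n g = \sum_{k=0}^{n-1} K_\sigma^k \bigl(\sigma e + (1-\sigma) c\bigr) + K_\sigma^n g .
\end{equation*}
We then identify $K_\sigma^k$ probabilistically. By induction on $k$ using the Markov property \eqref{eq:markovprop}, $(K_\sigma^k h)(x) = \EE_x[ h(X_k) \1\{\tau^\sigma \geq k\}]$. The point is that $(1-\sigma(x))$ is the indicator of $\{X_0 \notin E_\sigma\}$, so applying $K_\sigma$ repeatedly multiplies in the indicators that $X_0, \dots, X_{k-1}$ all lie outside $E_\sigma$, which is exactly the event $\{\tau^\sigma \geq k\}$. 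Substituting this into the sum, the terms telescope into
\begin{equation*}
    (T_\sigma^n g)(x) = \EE_x\Bigl[\sum_{t=0}^{(\tau^\sigma \wedge n)-1} c(X_t) + e(X_{\tau^\sigma})\1\{\tau^\sigma < n\}\Bigr] + \EE_x\bigl[g(X_n)\1\{\tau^\sigma \geq n\}\bigr].
\end{equation*}
This is the truncated cost of following $\sigma$ for $n$ periods, with terminal value $g$ if the process has not yet stopped.

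Comparing with \eqref{eq:lcf}, the difference $g_\sigma - T_\sigma^n g$ involves only terms on the event $\{\tau^\sigma \geq n\}$:
\begin{equation*}
    |g_\sigma(x) - (T^n_\sigma g)(x)| \leq \bigl(N_e + \|g\|\bigr)\PP_x\{\tau^\sigma \geq n\} + N_c\, \EE_x\bigl[(\tau^\sigma - n)^+\bigr].
\end{equation*}
Here $N_e$ and $N_c$ bound $e$ and $c$, and $\|g\|$ is the supremum norm. The main obstacle is making this bound uniform in $x$. Markov's inequality with \cref{a:cap} and \eqref{eq:sbou} gives $\PP_x\{\tau^\sigma \geq n\} \leq M/n$, where $M \coloneq \sup_x \EE_x \bar\tau$. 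For the second term, $\sup_x \EE_x \tau^\sigma < \infty$ alone does not make $\EE_x(\tau^\sigma - n)^+$ small uniformly, so we use a geometric tail argument. Pick $N \geq 2M$. Then $\PP_y\{\tau^\sigma \geq N\} \leq 1/2$ for every starting point $y$. Applying the Markov property at times $N, 2N, \dots$ yields $\PP_x\{\tau^\sigma \geq jN\} \leq 2^{-j}$ uniformly in $x$, because on $\{\tau^\sigma \geq jN\}$ the remaining stopping time is the hitting time of $E_\sigma$ from $X_{jN}$. Hence $\EE_x(\tau^\sigma - n)^+ \leq \sum_{m \geq n}\PP_x\{\tau^\sigma > m\}$ is bounded by a geometric tail that tends to zero uniformly in $x$. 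Therefore $\|T_\sigma^n g - g_\sigma\| \to 0$, and $T_\sigma$ is globally stable on $b\Xsf_+$ with unique fixed point $g_\sigma$.
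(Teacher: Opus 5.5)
Your proof is correct, but it takes a longer route than the paper. The paper never writes out the iterates. Since $T_\sigma^n f - T_\sigma^n g = K_\sigma^n(f-g)$, \cref{l:kf} and \cref{l:ibex} make $T_\sigma$ asymptotically contracting with factor $\sup_x \PP_x\{\tau^\sigma \geq n\} = O(1/n)$ (\cref{l:con}). Pairing this with the fixed point $g_\sigma$ of \cref{l:tsfg} gives $\|T_\sigma^n g - g_\sigma\| = \|K_\sigma^n(g - g_\sigma)\| \leq \|g - g_\sigma\| \sup_x \PP_x\{\tau^\sigma \geq n\}$.

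You instead compare $T_\sigma^n g$ with the defining expectation \eqref{eq:lcf}. This leaves the residual flow cost $N_c\,\EE_x(\tau^\sigma-n)^+$, which is what forces your geometric-tail argument. That argument is valid: take $N \geq \max\{1, 2M\}$ to cover $M = 0$, and note that $\{\tau^\sigma \geq (j+1)N\}$ is the intersection of the $\fF_{jN}$-measurable event $\{\tau^\sigma \geq jN\}$ with $\{\tau^\sigma \geq N\} \circ \theta^{jN}$. It is, however, avoidable. By the Markov property, the whole post-$n$ residual $\EE_x\bigl[\1\{\tau^\sigma \geq n\}\bigl(\sum_{t=n}^{\tau^\sigma-1} c(X_t) + e(X_{\tau^\sigma})\bigr)\bigr]$ equals $\EE_x[\1\{\tau^\sigma \geq n\}\, g_\sigma(X_n)]$. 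This is at most $\|g_\sigma\|\, \PP_x\{\tau^\sigma \geq n\}$, with $\|g_\sigma\|$ controlled by \cref{l:fomc}, which is exactly the paper's one-line comparison.

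Your route buys two things in return. First, it makes explicit that convergence is geometric. Your bound $\sup_x \PP_x\{\tau^\sigma \geq jN\} \leq 2^{-j}$ says $T_\sigma^N$ is a sup-norm contraction of modulus $1/2$, sharper than the $O(1/n)$ rate the paper records. This is also latent in the paper, since \cref{l:kf} at $n = N$ already gives $\|K_\sigma^N f\| \leq \frac12 \|f\|$, and this iterates. Second, $g_\sigma$ appears as a uniform limit of the measurable functions $T_\sigma^n g$, so you obtain $g_\sigma \in b\Xsf_+$ including measurability, which \cref{l:fomc} does not address. Since $T_\sigma$ is sup-norm nonexpansive, that limit is automatically a fixed point, so your argument would not even need \cref{l:tsfg}.
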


We prove the proposition using a sequence of lemmas. In the statement of the
next lemma, $\sigma$ is a fixed policy, $K_\sigma$ is as defined in
\eqref{eq:ksig}, and $\tau^\sigma$ is as defined in \eqref{eq:tau}.

\begin{lemma}\label{l:kf}
    For all $n \in \NN$ and $f$ in $b\Xsf$,  we have
    \begin{equation*}
        \| K_\sigma^n f \| 
        \leq \| f\| \cdot
        \sup_{x \in \Xsf} 
        \PP_x \left\{ \tau^\sigma \geq n \right\}.
    \end{equation*}
\end{lemma}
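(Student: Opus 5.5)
The plan is to obtain an explicit probabilistic representation of $K_\sigma^n f$ and then bound it directly. Specifically, for every $n \in \NN$, $f \in b\Xsf$ and $x \in \Xsf$ we aim to prove
\begin{equation*}
    (K_\sigma^n f)(x) = \EE_x \left[ \1\{\tau^\sigma \geq n\} \, f(X_n) \right].
\end{equation*}
Once this identity is available, the lemma follows in one line:
\begin{equation*}
    |(K_\sigma^n f)(x)| \leq \|f\| \, \PP_x\{\tau^\sigma \geq n\} \leq \|f\| \sup_{y} \PP_y\{\tau^\sigma \geq n\},
\end{equation*}
and we take the supremum over $x$.

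To prove the representation, we first note the event identity
\begin{equation*}
    \{\tau^\sigma \geq n\} = \bigcap_{t=0}^{n-1} \{\sigma(X_t) = 0\},
\end{equation*}
so that $\1\{\tau^\sigma \geq n\} = \prod_{t=0}^{n-1} (1 - \sigma(X_t))$. It is therefore enough to show
\begin{equation*}
    (K_\sigma^n f)(x) = \EE_x \left[ \prod_{t=0}^{n-1} (1-\sigma(X_t)) \, f(X_n) \right].
\end{equation*}
We proceed by induction on $n$.

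\emph{Base case.} For $n = 0$ both sides equal $f(x)$, with the empty product taken to be one. For $n=1$ the identity is just the definition \eqref{eq:ksig}, combined with $\EE_x f(X_1) = \int f(x')P(x,\diff x')$.

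\emph{Inductive step.} Write $K_\sigma^{n+1} f = K_\sigma(K_\sigma^n f)$. Then
\begin{equation*}
    (K_\sigma^{n+1} f)(x) = (1-\sigma(x)) \, \EE_x\left[ (K_\sigma^n f)(X_1) \right].
\end{equation*}
By the induction hypothesis, $(K_\sigma^n f)(X_1) = \EE_{X_1}[\prod_{t=0}^{n-1}(1-\sigma(X_t)) f(X_n)]$. Applying the Markov property \eqref{eq:markovprop} with $t=1$, this equals
\begin{equation*}
    \EE_x\left[ \prod_{t=1}^{n}(1-\sigma(X_t)) f(X_{n+1}) \,\Big|\, \fF_1 \right].
\end{equation*}
Taking $\EE_x$ and multiplying by the constant $1-\sigma(x) = 1-\sigma(X_0)$ under $\PP_x$ gives the claim for $n+1$. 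A simpler alternative avoids the general shift form: iterate the kernel and write $K_\sigma^n f$ as the $n$-fold integral
\begin{equation*}
    \int\!\cdots\!\int \prod_{t=0}^{n-1} (1-\sigma(x_t)) \, f(x_n) \, P(x_0,\diff x_1)\cdots P(x_{n-1},\diff x_n),
\end{equation*}
which is exactly the finite-dimensional law of the $P$-Markov chain started at $x_0 = x$.

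The main obstacle is bookkeeping rather than substance. We must get the index conventions right, so that the product runs over times $0,\dots,n-1$ and corresponds exactly to the event $\{\tau^\sigma \geq n\}$, not to $\{\tau^\sigma > n\}$. We must also handle measurability of $K_\sigma^n f$, which holds because $\sigma$ is measurable and $P$ is a kernel. The norm bound itself is immediate once the representation is in place. Note that finiteness of $\tau^\sigma$ is not needed here; the identity holds regardless.
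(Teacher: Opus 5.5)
Your proof is correct and takes essentially the same route as the paper. The paper writes $K_\sigma^n f$ as the $n$-fold iterated integral of $\prod_{t=0}^{n-1}(1-\sigma(x_t))\, f(x_n)$, bounds $|f(x_n)|$ by $\|f\|$, and identifies what remains with $\PP_{x_0}\{\tau^\sigma \geq n\}$ through the finite-dimensional law of the chain, which is exactly your ``simpler alternative''; your inductive derivation of the exact identity $(K_\sigma^n f)(x) = \EE_x[\1\{\tau^\sigma \geq n\} f(X_n)]$ via the Markov property is a valid repackaging of the same computation.
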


\begin{proof}
    Fixing $f \in b\Xsf$ and $x_0 \in \Xsf$, we iterate with $K_\sigma$ and
    apply the triangle inequality to obtain
    \begin{multline*}
        |(K^n_\sigma f)(x_0)|
        \leq (1-\sigma(x_0))
        \int (1-\sigma(x_1))
        \cdots
        \int (1-\sigma(x_{n-1}))
        \\
        \int 
        |f(x_n)| P(x_{n-1}, \diff x_n)
        P(x_{n-2}, \diff x_{n-1})
        \cdots
        P(x_0, \diff x_1).
    \end{multline*}
    Hence, if $(X_t)$ is $P$-Markov and starts at $x_0$, then
    \begin{align*}
        |(K^n_\sigma f)(x_0)|
        & \leq \| f\|
        \int \cdots \int
        \prod_{t=0}^{n-1} (1 - \sigma(x_t))
        P(x_{n-1}, \diff x_n)
        P(x_{n-2}, \diff x_{n-1})
        \cdots
        P(x_0, \diff x_1)
        \\
        & = \| f\| \cdot \PP_{x_0} \bigcap_{t=0}^{n-1} \, \{\sigma(X_t) = 0\}
          = \| f\| \cdot \PP_{x_0} \{\tau^\sigma \geq n\}.
    \end{align*}
    Taking the supremum on the right and then the left produces the bound in \cref{l:kf}.
\end{proof}

We will also use the following result regarding stopping times.

\begin{lemma}\label{l:ibex}
    If \cref{a:cap} holds, then, for all $\sigma \in \Sigma$,
    \begin{equation}\label{eq:ibex}
        \lim_{n \to \infty} \sup_{x \in \Xsf}
        \PP_x \{\tau^\sigma \geq n\} = 0.
    \end{equation}
\end{lemma}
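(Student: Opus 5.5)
First reduce to the lower bound policy. For any $\sigma \in \Sigma$ we have $\bar\sigma \leq \sigma$, so by \eqref{eq:sbou} $\tau^\sigma \leq \bar\tau$ almost surely, and hence $\PP_x\{\tau^\sigma \geq n\} \leq \PP_x\{\bar\tau \geq n\}$ for all $x$ and $n$. It therefore suffices to prove \eqref{eq:ibex} with $\bar\tau$ in place of $\tau^\sigma$. Markov's inequality alone gives only $\sup_x \PP_x\{\bar\tau \geq n\} \leq M/n$ with $M \coloneq \sup_x \EE_x\bar\tau < \infty$ from \cref{a:cap}. This already tends to zero, which is all that \eqref{eq:ibex} asks for.

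The plan, then, is three steps. First, fix $M$ as above. Second, for each $x$ and $n \geq 1$, write $\PP_x\{\bar\tau \geq n\} \leq \EE_x \bar\tau / n \leq M/n$. This is Markov's inequality applied to the nonnegative random variable $\bar\tau$, with the convention that $\bar\tau = \infty$ has zero probability because $\EE_x\bar\tau < \infty$. Third, combine with the comparison $\tau^\sigma \leq \bar\tau$ to get $\sup_x \PP_x\{\tau^\sigma \geq n\} \leq M/n \to 0$.

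It may be worth recording a stronger, geometric bound for later use, since it would give geometric convergence in \cref{p:nodgs} via \cref{l:kf}. To get it, choose $N \geq 2M$, so that $\sup_x \PP_x\{\bar\tau \geq N\} \leq 1/2$. Then apply the Markov property \eqref{eq:markovprop} at time $N$ on the event $\{\bar\tau \geq N\}$. On that event the shifted stopping time satisfies $\bar\tau = N + \bar\tau\circ\theta^N$, which yields $\PP_x\{\bar\tau \geq kN\} \leq 2^{-k}$ by induction.

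The only delicate point is the comparison $\tau^\sigma \leq \bar\tau$. It holds pathwise, since $E_{\bar\sigma} \subset E_\sigma$ means the first hit of $E_\sigma$ occurs no later than the first hit of $\bar E$. So I do not expect a real obstacle; the Markov-inequality route is the one I would write up.
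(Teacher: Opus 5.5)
Your proof is correct and essentially the paper's argument: both bound the tail of $\bar\tau$ by $M/n$ and transfer it to $\tau^\sigma$ through the pathwise comparison $\tau^\sigma \leq \bar\tau$ from \eqref{eq:sbou}. The paper gets $\PP_x\{\bar\tau > n\} \leq M/n$ from the tail-sum formula $\EE_x\bar\tau = \sum_{s \geq 0}\PP_x\{\bar\tau > s\}$, which is Markov's inequality in another form, and then needs an index shift to $n-1$ that your direct use of Markov's inequality on $\{\bar\tau \geq n\}$ avoids; your geometric-tail remark is valid but not needed for the lemma.
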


\begin{proof}
    Let $\bar \tau$ be as in \eqref{eq:bartau}.
    Under \cref{a:cap}, there exists an $M < \infty$ with
    $\EE_x \bar \tau \leq M$ for all $x$.  In addition,
    for any $x \in \Xsf$ and any $n > 0$, we have:
    \begin{equation*}
        \EE_x \bar \tau 
        = \sum_{s=0}^{\infty} \PP_x\{\bar \tau > s\} 
        \geq \sum_{s=0}^n \PP_x\{\bar \tau > s\} 
        \geq n \cdot \PP_x\{\bar \tau > n\},
    \end{equation*}
    where the last inequality holds because $\bar \tau > n$ implies $\bar \tau > s$ for all $s \leq n$.
    Rearranging yields
    \begin{equation*}
        \PP_x\{\bar \tau > n\} \leq \frac{\EE_x \bar \tau}{n} \leq \frac{M}{n}
        \quad \text{for all } x \in \Xsf.
    \end{equation*}
    As a consequence,
    \begin{equation}\label{eq:ld}
        \lim_{n \to \infty} \sup_{x \in \Xsf} \PP_x\{\bar \tau > n\} = 0.
    \end{equation}
    Now fix $\sigma \in \Sigma$.  By \eqref{eq:sbou}
    we have $\tau^\sigma \leq \bar \tau$ and hence
    $\{\tau^\sigma \geq n\} = \{\tau^\sigma > n-1\} \subset \{\bar \tau >
    n-1\}$.  Combining this with
    \eqref{eq:ld} yields \eqref{eq:ibex}.
\end{proof}

\begin{lemma}\label{l:con}
    If \cref{a:cap} holds, then, for every $\sigma \in \Sigma$, the
    policy operator $T_\sigma$ is asymptotically contracting on $b\Xsf$.
\end{lemma}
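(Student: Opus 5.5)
The plan is to exploit the affine structure of $T_\sigma$ in \eqref{eq:tsk}. For any $g, h \in b\Xsf$ the constant term $\sigma e + (1-\sigma)c$ cancels in the difference of the two images, and $K_\sigma$ is linear. Hence
\begin{equation*}
    T_\sigma \, g - T_\sigma \, h = K_\sigma (g - h).
\end{equation*}
Iterating this identity by induction on $n$ gives
\begin{equation*}
    T_\sigma^n \, g - T_\sigma^n \, h = K_\sigma^n (g - h)
    \qquad (n \in \NN).
\end{equation*}
This reduces the problem to bounding $\| K_\sigma^n f \|$ for $f = g - h \in b\Xsf$. Before using it, I will confirm that $T_\sigma$ maps $b\Xsf$ into itself, so that the iterates are defined. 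This holds because $e$ and $c$ are bounded and measurable, $\sigma$ is measurable, and $x \mapsto \int g(x') P(x, \diff x')$ is bounded and measurable whenever $g$ is.

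Next, apply \cref{l:kf} with $f = g - h$. It gives
\begin{equation*}
    d(T_\sigma^n \, g, T_\sigma^n \, h)
    = \| K_\sigma^n (g-h) \|
    \leq \| g - h \| \cdot \sup_{x \in \Xsf} \PP_x \{ \tau^\sigma \geq n \}.
\end{equation*}
Under \cref{a:cap}, \cref{l:ibex} says the supremum on the right tends to zero as $n \to \infty$. Since $\| g - h \|$ is a fixed finite number, the right-hand side tends to zero. This is exactly asymptotic contractivity of $T_\sigma$ on $b\Xsf$.

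There is no real obstacle, since the heavy lifting is done by \cref{l:kf} and \cref{l:ibex}. The only point needing care is verifying the difference identity for iterates: one must check that the constant part is truly independent of $g$. It is, because $\sigma$ is fixed, so the identity holds for every $n$ and not just $n = 1$.
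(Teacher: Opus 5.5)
Your proposal is correct and follows the paper's proof essentially verbatim: the affine form \eqref{eq:tsk} gives $T_\sigma^n g - T_\sigma^n h = K_\sigma^n (g-h)$, \cref{l:kf} bounds its norm by $\|g-h\| \cdot \sup_{x} \PP_x\{\tau^\sigma \geq n\}$, and \cref{l:ibex} sends that supremum to zero. Your explicit induction for the iterate identity and your check that $T_\sigma$ maps $b\Xsf$ into itself are details the paper leaves implicit; the paper also writes the first step as an inequality rather than the equality you establish.
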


\begin{proof}
    Fixing $f, g \in b\Xsf$, using the definition of $T_\sigma$ in \eqref{eq:tsk}
    and the bound in \cref{l:kf} produces
    \begin{equation*}
        \|T_\sigma^n \, f - T_\sigma^n \, g\| 
        \leq \|K_\sigma^n (f - g) \|
        \leq \|f - g \| \cdot \sup_{x \in \Xsf} 
        \PP_x \left\{ \tau^\sigma \geq n \right\}
    \end{equation*}
    for all $n \in \NN$.  
    The claim in \cref{l:con} now follows from \cref{l:ibex}.
\end{proof}

\begin{proof}[Proof of \cref{p:nodgs}]
    Global stability of $T_\sigma$ follows from asymptotic contractiveness
    (\cref{l:con}) and existence of a fixed point (\cref{l:tsfg}).
\end{proof}

\subsubsection{Optimality}

We define the \navy{minimum loss function} $\gmin$ via
\begin{equation}\label{eq:gstar}
    \gmin(x) \coloneq \inf_{\sigma \in \Sigma} g_\sigma(x)
    \qquad (x \in \Xsf).
\end{equation}
The function $\gmin$ also takes values in $[0,\infty)$ and is
well-defined everywhere on $\Xsf$.  Below (see the discussion after
\cref{t:ndbk}) we show that $\gmin$ is equal to the min-value function
$\vmin$ of the ADP, as defined in Section~\ref{ss:minrpospace}.

A policy $\sigma \in \Sigma$ is called \navy{optimal} if $g_\sigma \leq g_s$ for
all $s \in \Sigma$. This is equivalent to the statement that $\sigma$ attains
the minimum possible cost from every state, and is equivalent to the ADP
definition in Section~\ref{ss:minrpospace}. 

We can now state the following result.

\begin{theorem}\label{t:ndbk}
    If \cref{a:cap} holds, then, for the no-discount optimal stopping ADP $(b\Xsf_+, \TT)$,
    \begin{enumerate}
        \item the fundamental min-optimality properties hold and
        \item the min-VFI, min-OPI, and min-HPI algorithms all converge.
    \end{enumerate}
\end{theorem}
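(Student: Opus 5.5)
The plan is to apply \cref{t:mintspo} to the ADP $(b\Xsf_+, \TT)$. That theorem requires four things: min-regularity, global stability, min-order boundedness, and countable Dedekind completeness of the value space. Two of these are already in hand. Min-regularity was verified in Section~\ref{sss:noadp}, where the policy $\sigma = \1\{e \leq c + Pg\}$ lies in $\Sigma$ and is $g$-min-greedy for every $g \in b\Xsf_+$. Global stability on $b\Xsf_+$ is \cref{p:nodgs}, which relies on \cref{a:cap}. Before invoking it, I will check that each $T_\sigma$ really maps $b\Xsf_+$ into itself. This holds because $e, c \geq 0$ and $P$ preserves nonnegativity, boundedness and measurability. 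I will also check that the fixed point $g_\sigma$ from \cref{l:tsfg} lies in $b\Xsf_+$. This follows from \cref{l:fomc} together with nonnegativity and measurability of $g_\sigma$, the latter coming from the standard measurability of $x \mapsto \EE_x H$. Finally, $b\Xsf_+$ is a closed subset of $b\Xsf$ under the sup norm, so it is a pospace with the pointwise order.

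Min-order boundedness is immediate with $b = 0$, since $T_\sigma 0 = \sigma e + (1-\sigma)c \geq 0$ for every $\sigma$.

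The only step needing an actual argument is countable Dedekind completeness of $b\Xsf_+$. Take a nonempty countable family $\{g_n\}$ in $b\Xsf_+$ that is bounded above by some $u \in b\Xsf_+$. Its pointwise supremum $\sup_n g_n$ is measurable because the family is countable, nonnegative, and bounded by $\|u\|$, so it belongs to $b\Xsf_+$. It is then the least upper bound in the pointwise order. For the infimum side, a family bounded below has the pointwise infimum $\inf_n g_n$, which is measurable and nonnegative and bounded above by $g_1$, so it also lies in $b\Xsf_+$. Countability is what guarantees measurability here, and this is exactly why the theorem asks only for countable completeness.

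With all hypotheses verified, \cref{t:mintspo} gives (i) and (ii) directly. The main obstacle is not in these verifications but was already handled upstream: showing that each $g_\sigma$ is a bounded fixed point (\cref{l:tsfg}, \cref{l:fomc}) and that $T_\sigma$ is asymptotically contracting (\cref{l:con}). These are the places where \cref{a:cap} does its work. In the write-up I will mainly make sure that global stability is stated on $b\Xsf_+$ rather than on $b\Xsf$. This is harmless, because the unique fixed point $g_\sigma$ lies in $b\Xsf_+$ and the iterates $T^n_\sigma g$ stay in $b\Xsf_+$.
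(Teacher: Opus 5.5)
Your proposal is correct and matches the paper's proof. The paper likewise invokes \cref{t:mintspo}, citing min-regularity from Section~\ref{sss:noadp}, global stability from \cref{p:nodgs}, min-order boundedness via $T_\sigma 0 \geq 0$, and countable Dedekind completeness of $b\Xsf_+$. Your extra checks only fill in details the paper leaves implicit: that each $T_\sigma$ maps $b\Xsf_+$ into itself, that $g_\sigma \in b\Xsf_+$, and that countable pointwise suprema and infima remain measurable.
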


\begin{proof}
    We saw in Section~\ref{sss:noadp} that $(b\Xsf_+, \TT)$ is min-regular,
    and in \cref{p:nodgs} that $(b\Xsf_+, \TT)$ is 
    globally stable.  The ADP is min-order bounded, as well, since $T_\sigma 0 \geq 0$
    for all $\sigma$.  Since $b\Xsf_+$ is countably Dedekind complete,
    all of the conditions of \cref{t:mintspo} are satisfied.  This
    implies the conclusions of \cref{t:ndbk}.
\end{proof}

With \cref{t:ndbk} in hand, we can confirm the claim made after
\eqref{eq:gstar} that $\gmin = \vmin$.  (The only potential difficulty is
that the pointwise infimum in \eqref{eq:gstar} is over an uncountable family,
so measurability of $\gmin$ is not automatic.)  By the fundamental
min-optimality properties, there exists a min-optimal policy $\sigma$, and,
for this policy, $v_\sigma = \vmin$.  Since $v_\tau = g_\tau$ for all $\tau
\in \Sigma$ (\cref{l:tsfg} and \cref{p:nodgs}), we have $\vmin = g_\sigma
\geq \gmin$ pointwise.  At the same time, $\vmin$ is a lower bound of
$\setntn{g_\tau}{\tau \in \Sigma}$, so $\vmin \leq \gmin$ pointwise.  Hence
$\gmin = \vmin$; in particular, $\gmin \in b\Xsf_+$.

\subsection{Sequential Analysis}\label{ss:sar}

In this section we study the classical Bayesian sequential analysis problem
introduced by \cite{wald1947sequential} and \cite{arrow1949bayes} (see the
introduction for background and related literature). The problem
involves no discounting, so the Bellman operator is not a contraction and
standard contraction-based theory does not apply.  Using \cref{t:ndbk}, our
optimality result for no-discount optimal stopping, we show that the
fundamental min-optimality properties hold and that min-VFI, min-OPI, and
min-HPI all converge under the weak assumption that the candidate densities
$f_0$ and $f_1$ are distinct.

We proceed by first reducing the sequential analysis problem to a special case
of the no-discount optimal stopping problem treated in \cref{t:ndbk}, then
verifying the conditions of that theorem.

\subsubsection{Set Up}

A decision-maker observes a sequence $Z_1, Z_2, \ldots$ of {\sc iid} samples
drawn from an unknown probability density $f$ on $\RR$.  It is known that
$f$ is either $f_0$ or $f_1$, where $f_0$ and $f_1$ are given probability
densities.  After each observation the decision-maker chooses one of three
actions: accept the hypothesis $f = f_0$ and stop, accept the hypothesis
$f = f_1$ and stop, or draw an additional sample at cost $c > 0$.  Accepting
the wrong hypothesis incurs a strictly positive loss: $L_0$ when $f_0$ is
accepted but $f = f_1$, and $L_1$ when $f_1$ is accepted but $f = f_0$.  The
decision-maker holds a prior $\pi_0 \in (0, 1)$ that $f = f_1$, and updates to
the posterior $\pi_n$ after $n$ observations via Bayes' rule.  This posterior
is the natural state variable for the dynamic program below.

The state space for the belief state $\pi$ is
$\Xsf = (0,1)$ and action space is $\Asf = \{0, 1, 2\}$, where action $0$
represents accepting $f_0$, action $1$ represents accepting $f_1$, and action
$2$ represents continuing to sample. We assume that the two densities are
defined on $\RR$.

The Bellman equation has the form
\begin{equation}\label{eq:waldie2}
    g(\pi) = \min
    \left\{
        \pi L_0, \; (1-\pi) L_1, \; c + \int g(\pi') P(\pi, \diff \pi')
    \right\}
\end{equation}
for $\pi \in (0,1)$, where the stochastic kernel $P$ obeys
\begin{equation}\label{eq:waldsk}
    (Pg)(\pi) 
    \coloneq \int g(\kappa(\pi, z)) \psi(\pi, z) \diff z.
\end{equation}
Here
\begin{equation}\label{eq:predden}
    \psi(\pi,z) = (1-\pi)f_0(z) + \pi f_1(z)
\end{equation}
is the predictive density and
\begin{equation*}
    \kappa(\pi, z)
    = \frac{\pi f_1(z)}{(1-\pi) f_0(z) + \pi f_1(z)}
    = \frac{\pi f_1(z)}{\psi(\pi, z)}
\end{equation*}
is the Bayesian update rule. Together, $\psi$ and $\kappa$ define the
stochastic kernel $P$ governing the belief state $(\pi_n)_{n \geq 0}$,
describing how beliefs evolve from the perspective of the controller when the
observation sequence $(Z_n)_{n \geq 1}$ is forecast using the predictive
density.

In all of what follows, the cost $c$ and the losses $L_0$ and $L_1$ are assumed
to be positive constants. Our aim is to characterize and solve for optimal policies.

In terms of dynamic programming, we can simplify the sequential analysis to a
binary stopping problem.  Our first step is to set
\begin{equation}\label{eq:epi}
    e(\pi) \coloneq \min\{\pi L_0, \; (1-\pi) L_1\}.
\end{equation}
Now consider the Bellman equation
\begin{equation}\label{eq:waldie3}
    g(\pi) = \min
    \left\{
        e(\pi), \; c + (Pg)(\pi)
    \right\}.
\end{equation}
We claim that solving this dynamic program is sufficient for solving the sequential
analysis problem. To see this, suppose we are able to show that the fundamental
min-optimality properties hold for this dynamic program.  Let the min-value
function be denoted by $\gmin$. Continuing the convention that the controller
always stops when indifferent between stopping and continuing, Bellman's
principle of min-optimality tells the controller to stop if and only if $e(\pi)
\leq c + (P\gmin)(\pi)$. 

In the present setting, this means that the controller stops if and only if at
least one of the stopping losses $\pi L_0$ and $(1-\pi)L_1$ is less than or
equal to the continuation loss $c + (P\gmin)(\pi)$. When this stop occurs, the
controller then makes the static choice over the two density options (selecting
$f_0$ or $f_1$) depending on which of $\pi L_0$ and $(1-\pi)L_1$ is smaller.
Since this static problem is trivial, we can concentrate on solving the dynamic
problem represented by the Bellman equation \eqref{eq:waldie3}.

The stopping problem just described is a special case of the no-discount optimal
stopping problem from Section~\ref{ss:nodos}, with $e$ as the exit cost function in
\eqref{eq:epi} and the flow cost $c$ constant over the state space $\Xsf =
(0,1)$.  (To confirm this, compare the Bellman equation \eqref{eq:waldie3} with
the general case in \eqref{eq:bell}.) As a result, \cref{t:ndbk} applies.
All we need to do is check that \cref{a:cap} is valid in the current
setting.  This turns out to be true whenever $f_0$ and $f_1$ are distinct.
Our proof will rely on a bound for martingale stopping times in \cref{t:exit}.

\subsubsection{Verifying \cref{a:cap}}

Let's consider verification of \cref{a:cap} in the present setting. Let $\bar
\tau$ be the upper bound stopping time for the belief state (i.e., the stopping
time in \eqref{eq:bartau} specialized to the current setting).
This stopping time is defined in terms of the certain exit
region (see \eqref{eq:cerex}), which, for our problem, is
\begin{equation*}
    \bar E \coloneq
    \setntn{\pi \in (0,1)}{\min\{\pi L_0, \; (1-\pi) L_1\} \leq c}.
\end{equation*}
Equivalently, 
\begin{equation*}
    \pi \in \bar E 
    \iff
    \pi \leq \frac{c}{L_0} \quad \text{or} \quad \pi \geq 1 - \frac{c}{L_1}.
\end{equation*}
The lower bound policy $\bar \sigma$ is (recalling its definition from
Section~\ref{sss:lbp}) the indicator function for $\bar E$, stopping the process
whenever the belief state enters the certain exit region.  The upper bound
stopping time is, therefore,
\begin{equation}\label{eq:btb}
    \bar \tau = \inf
    \left\{
        n \geq 0 \,:\, \pi_n \leq \frac{c}{L_0} \text{ or } \pi_n \geq 1 -
        \frac{c}{L_1}
    \right\}.
\end{equation}
Here the $(\pi_n)$ process evolves according to the kernel $P$ from
\eqref{eq:waldsk}: given $\pi_n$, we draw $Z_{n+1}$ independently from
$\psi(\pi_n, \cdot)$, and then set
\begin{equation}\label{eq:bsu}
    \pi_{n+1}
        = \kappa(\pi_n, Z_{n+1})
        = \frac{\pi_n f_1(Z_{n+1})}{\psi(\pi_n, Z_{n+1})}.
\end{equation}
(Note here that division by zero is not a concern: For $\pi \in (0,1)$, we have
$\psi(\pi, z) > 0$ if and only if $f_0(z) + f_1(z) > 0$. Since  $Z_{n+1}$ is drawn
from $\psi(\pi, \cdot)$, we have $\psi(\pi_n, Z_{n+1}) > 0$ almost surely.)

Matching \eqref{eq:ndsig}, the policy set $\Sigma$ under consideration is all
$\bB$-measurable $\sigma \colon \Xsf \to \{0,1\}$ with $\bar \sigma \leq
\sigma$.  For $\sigma$ in this set, the policy operator takes the form
\begin{equation}\label{eq:ndpoll}
    (T_\sigma \, g)(\pi)  
    = \sigma(\pi) e(\pi) + (1 - \sigma(\pi))
        \left[c + \int g(\pi') P(\pi, \diff \pi') \right].
\end{equation}
Let $V$ be all bounded Borel measurable functions from $(0,1)$ to $\RR_+$.
With $\TT_{\rm SA}$ as the family of operators described by \eqref{eq:ndpoll},
indexed by $\sigma \in \Sigma$, the pair $(V, \TT_{\rm SA})$ forms an ADP.
For this ADP, we can state the following result. In the statement, the
densities $f_0$ and $f_1$ are called \navy{distinct} if they differ on a set
of positive Lebesgue measure.

\begin{proposition}\label{p:waldop2}
    If $f_0$ and $f_1$ are distinct, then for the sequential analysis ADP $(V,
    \TT_{\rm SA})$, the fundamental min-optimality properties hold and min-VFI,
    min-OPI, and min-HPI all converge.
\end{proposition}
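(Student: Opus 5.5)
By \cref{t:ndbk}, it suffices to verify \cref{a:cap} for the belief process: we must show $\sup_{\pi \in (0,1)} \EE_\pi \bar\tau < \infty$, where $\bar\tau$ is given by \eqref{eq:btb}. Only states in the continuation interval $C \coloneq (c/L_0,\, 1 - c/L_1)$ matter. If $C$ is empty the claim is trivial, so assume $C$ is nonempty.

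\textbf{Step 1: a uniform one-step exit probability.} The key structural fact is that $(\pi_n)$ is a martingale under the predictive law, since
\begin{equation*}
    \int \kappa(\pi,z)\psi(\pi,z)\diff z = \pi\int f_1 = \pi .
\end{equation*}
We do not use the martingale property directly. Instead we show there exist $N \in \NN$ and $\delta > 0$ such that
\begin{equation*}
    \PP_\pi\{\bar\tau \leq N\} \geq \delta \quad \text{for all } \pi \in C .
\end{equation*}
Once this holds, the Markov property \eqref{eq:markovprop} applied at times $N, 2N, \ldots$ gives $\PP_\pi\{\bar\tau > kN\} \leq (1-\delta)^k$. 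Summing yields $\EE_\pi \bar\tau \leq N/\delta$ uniformly in $\pi$, which is \cref{a:cap}.

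\textbf{Step 2: exit in the likelihood-ratio coordinate.} Write the belief in log-odds form,
\begin{equation*}
    \ell_n \coloneq \log\frac{\pi_n}{1-\pi_n} = \ell_0 + \sum_{k=1}^n \log\frac{f_1(Z_k)}{f_0(Z_k)} .
\end{equation*}
The region $C$ corresponds to a bounded interval $[a,b]$ of $\ell$ values. Under the predictive law, the observations $Z_k$ are a mixture: with probability $1-\pi_0$ they are iid $f_0$, and with probability $\pi_0$ they are iid $f_1$. It is therefore enough to show the following. Under $f_1$, the random walk $S_n = \sum_{k \leq n} \log(f_1/f_0)(Z_k)$ exceeds $b-a$ within $N$ steps with probability at least $\delta'$. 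Symmetrically, under $f_0$, $S_n$ falls below $-(b-a)$ with probability at least $\delta'$.

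Since $\pi_0 \in C$ is bounded away from $0$ and $1$, the mixture weights are bounded below by a positive constant on $C$. This converts the conditional statements into the uniform bound of Step 1, because exiting $[a,b]$ from any starting point only requires a displacement of size $b-a$. Distinctness of $f_0$ and $f_1$ is what makes the increments nondegenerate. Under $f_1$, the increment has mean equal to the Kullback--Leibler divergence $D(f_1\|f_0)$, which lies in $(0,\infty]$; symmetrically, under $f_0$ the mean is $-D(f_0\|f_1) < 0$.

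\textbf{Step 3, the main obstacle: nondegeneracy without moment assumptions.} The increments may be $\pm\infty$ (on sets where one density vanishes) and need not be integrable. I therefore avoid the law of large numbers and argue directly. Distinctness gives $\varepsilon > 0$ and a set $A$ with
\begin{equation*}
    \int_A f_1 > 0 \quad\text{and}\quad \log(f_1/f_0) \geq \varepsilon \text{ on } A \quad (\text{allowing } f_0 = 0 \text{ on } A).
\end{equation*}
To see this, note that if $f_1 \leq f_0$ a.e. on $\{f_1>0\}$, then integrating gives $f_1 = f_0$ a.e., contradicting distinctness.

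With $N \coloneq \lceil (b-a)/\varepsilon\rceil$, the event that all of $Z_1,\ldots,Z_N$ lie in $A$ has probability $(\int_A f_1)^N > 0$ under $f_1$. On that event, $\ell_n$ increases by at least $\varepsilon$ at each step, so it passes $b$ within $N$ steps. The symmetric argument under $f_0$ uses a set $A'$ with $\log(f_0/f_1) \geq \varepsilon$ on $A'$.

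Finally, take $\delta$ to be the minimum of the mixture weight bound times these two probabilities. Some care is needed when the path exits early or hits infinite log-odds, which corresponds to $\pi$ leaving $(0,1)$ as a null event or landing in $\bar E$. These cases only help, because they mean the process has already stopped.
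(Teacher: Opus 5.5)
Your proposal is correct. After the common reduction to \cref{a:cap} via \cref{t:ndbk}, it takes a genuinely different route from the paper.

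The paper works with the martingale $(\pi_n)$ directly. It computes the one-step conditional variance $\pi^2(1-\pi)^2\int (f_1-f_0)^2/\psi(\pi,\cdot)$ and bounds it below on the continuation interval by $(c^2/(L_0L_1))^2\Delta(f_0,f_1)$, using the triangular discrimination. It then applies the quadratic-variation and optional-stopping bound of \cref{t:exit}. The result is the explicit estimate $\EE_\pi\bar\tau \leq (L_0L_1/c^2)^2/\Delta(f_0,f_1)$, and distinctness enters only through $\Delta(f_0,f_1)>0$.

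You instead prove a uniform exit-probability bound, $\PP_\pi\{\bar\tau\leq N\}\geq\delta$ on the continuation interval, and iterate it with the Markov property. Distinctness enters through a set $A$ with $f_1\geq e^{\varepsilon}f_0$ on $A$ and $\int_A f_1>0$.

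Your argument is more elementary and more portable: it needs no martingale structure and no second-moment computation, and it gives geometric tails for $\bar\tau$ directly. Its bound $N/\delta$ is typically far weaker than the paper's, though. Here $\delta$ is of order $(c/L_0)(\int_A f_1)^N$, $N$ grows like the log-odds width divided by $\varepsilon$, and the constant depends on a non-canonical choice of $(\varepsilon, A)$.

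A few points to tighten:
\begin{itemize}
\item The mixture description of the predictive law needs a one-line check: the product $\prod_{k}\psi(\pi_{k-1},z_k)$ telescopes to $(1-\pi_0)\prod_k f_0(z_k)+\pi_0\prod_k f_1(z_k)$. Alternatively, you can avoid the mixture altogether. Since $\kappa(\pi,z)\geq\pi$ whenever $f_1(z)\geq f_0(z)$, beliefs increase along $\{Z_1,\ldots,Z_{k-1}\in A\}$. Hence each conditional probability $\int_A\psi(\pi_{k-1},z)\diff z$ is at least $(c/L_0)\int_A f_1$.
\item Extracting $\varepsilon$ from the positive $f_1$-mass of $\{f_1>f_0\}$ requires continuity of measure along the sets $\{f_1>e^{1/k}f_0\}$.
\item Only the $f_1$ branch is needed, since its weight $\pi_0$ already exceeds $c/L_0$ on the continuation interval.
\item If $f_0=0$ on part of $A$, the jump to $\pi=1$ has positive probability; it is not a null event. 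It is harmless because it lands in the closed stopping region. Note that the paper's formulation on the state space $(0,1)$ silently shares this issue.
\end{itemize}
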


To prove \cref{p:waldop2}, we recognize $(V, \TT_{\rm SA})$ as a special case of the
no-discount optimal stopping ADP treated in \cref{t:ndbk}.  As such, we only
need to verify \cref{a:cap}, which amounts to showing that $\sup_{0 < \pi
< 1} \EE_\pi \bar \tau$ is finite.

As a first step, we recall that, given two probability densities $f_0$ and $f_1$
on $\RR$, the \navy{triangular discrimination} is
\begin{equation*}
    \Delta(f_0, f_1) \coloneq \int \frac{[f_1(z) - f_0(z)]^2}{f_0(z) + f_1(z)} \, \diff z,
\end{equation*}
where the integrand is defined to be zero when $f_0(z) + f_1(z) = 0$.

\begin{lemma}\label{l:delta_positive}
    If $f_0$ and $f_1$ are distinct, then $\Delta(f_0, f_1) > 0$.
\end{lemma}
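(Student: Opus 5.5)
The plan is to argue by contraposition: assume $\Delta(f_0, f_1) = 0$ and deduce that $f_0 = f_1$ Lebesgue-almost everywhere, which contradicts distinctness. The integrand
\begin{equation*}
    h(z) \coloneq \frac{[f_1(z) - f_0(z)]^2}{f_0(z) + f_1(z)}
\end{equation*}
(set equal to zero where $f_0(z) + f_1(z) = 0$) is nonnegative and measurable. So $\Delta(f_0, f_1) = \int h \, \diff z$ is a well-defined element of $[0, \infty]$, and we only need to rule out the value zero.

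The key steps are as follows.

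First, let $D \coloneq \setntn{z \in \RR}{f_0(z) \neq f_1(z)}$. By distinctness, $D$ has positive Lebesgue measure.

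Second, on $D$ we check that $h > 0$. Since densities are nonnegative and $f_0(z) \neq f_1(z)$, at least one of them is strictly positive, so $f_0(z) + f_1(z) > 0$. The numerator $[f_1(z) - f_0(z)]^2$ is also strictly positive. Hence $h(z) > 0$ for every $z \in D$.

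Third, we use the standard fact that a nonnegative measurable function which is strictly positive on a set of positive measure has a strictly positive integral. Concretely, write $D = \bigcup_k \setntn{z \in D}{h(z) > 1/k}$. By continuity of measure, some set $D_k \coloneq \setntn{z \in D}{h(z) > 1/k}$ has positive measure. Then
\begin{equation*}
    \Delta(f_0, f_1) \geq \frac{1}{k} \, \lambda(D_k) > 0,
\end{equation*}
where $\lambda$ denotes Lebesgue measure.

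There is no real obstacle here. The only points needing care are the zero-denominator convention, which is harmless because $D$ excludes points where both densities vanish, and the measurability of $h$, which holds because $h$ is built from measurable functions by arithmetic operations and an indicator of a measurable set.

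As an optional finiteness remark, $\Delta(f_0, f_1) \leq \int (f_0 + f_1) \, \diff z = 2$, because $[f_1 - f_0]^2 \leq (f_0 + f_1)^2$. This bound is not needed for the lemma itself, but it will be useful later when $\Delta$ enters the bounds on $\EE_\pi \bar \tau$.
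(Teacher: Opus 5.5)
Your proof is correct and follows the paper's argument: on the set where $f_0 \neq f_1$ the denominator $f_0 + f_1$ is strictly positive, so the integrand is strictly positive on a set of positive measure, and hence $\Delta(f_0, f_1) > 0$. Your decomposition into the sets $D_k$ just spells out the standard fact that the paper invokes in one line; note that, despite your opening sentence about contraposition, the argument you actually give is direct, which is fine.
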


\begin{proof}
    If $f_0$ and $f_1$ are distinct, then they differ on a set $E$ of positive
    measure.  For
    $z \in E$ we have $f_0(z) + f_1(z) > 0$ (since otherwise both are zero,
    which contradicts the definition of $E$).  This means that the integrand in
    the definition of $\Delta(f_0, f_1)$ is positive on $E$.  Integration of a positive
    function over a set of positive measure yields a positive value.
\end{proof}

Now let's investigate the properties of stopping times for the process $(\pi_n)$
from \eqref{eq:bsu}. We will begin with a generic stopping time 
\begin{equation*}
    \tau = \inf\{n \geq 0 : \pi_n \notin (a,b)\},
\end{equation*}
where $a, b$ are numbers satisfying $0 < a < b < 1$.

\begin{lemma}\label{l:belief}
    If $f_0$ and $f_1$ are distinct, then, for any initial condition $\pi_0 \in (0,1)$, we have
    \begin{equation*}
        \EE_{\pi_0}[\tau] \leq \frac{1}{\delta},
        \quad \text{where} \quad
        \delta \coloneq a^2 (1-b)^2 \Delta(f_0, f_1).
    \end{equation*}
\end{lemma}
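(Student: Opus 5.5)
The plan is to use the posterior $(\pi_n)$ as a martingale under the predictive measure and to control $\tau$ through the growth of its squared increments. The paper mentions a bound for martingale stopping times (\cref{t:exit}); since that result is not yet stated, I will argue directly with optional stopping. First, $\pi_n$ is a martingale. Indeed, $\EE[\pi_{n+1} \given \pi_n = \pi] = \int \kappa(\pi,z)\psi(\pi,z)\diff z = \int \pi f_1(z)\diff z = \pi$. It also takes values in $(0,1)$, so it is bounded. Consequently $M_n \coloneq \pi_n^2 - \sum_{k<n} D(\pi_k)$ is a martingale, where $D(\pi) \coloneq \EE[(\pi_{n+1}-\pi_n)^2 \given \pi_n=\pi]$ is the conditional variance.

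The key computation is a lower bound on $D$. Since $\kappa(\pi,z) - \pi = \pi(1-\pi)(f_1(z)-f_0(z))/\psi(\pi,z)$, we get
\begin{equation*}
    D(\pi) = \pi^2(1-\pi)^2 \int \frac{[f_1(z)-f_0(z)]^2}{\psi(\pi,z)} \diff z .
\end{equation*}
Now $\psi(\pi,z) \leq f_0(z)+f_1(z)$ because $\psi$ is a convex combination of the two densities, with zero-integrand conventions where $\psi = 0$. Hence $D(\pi) \geq \pi^2(1-\pi)^2\Delta(f_0,f_1)$. For $\pi \in (a,b)$ we have $\pi^2(1-\pi)^2 \geq a^2(1-b)^2$, so $D(\pi) \geq \delta$ on the continuation region. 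By \cref{l:delta_positive}, $\delta>0$.

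Next apply optional stopping to $M$ at the bounded time $\tau\wedge n$:
\begin{equation*}
    \EE_{\pi_0}\pi_{\tau\wedge n}^2 - \pi_0^2 = \EE_{\pi_0}\sum_{k<\tau\wedge n} D(\pi_k) \geq \delta\, \EE_{\pi_0}(\tau\wedge n).
\end{equation*}
Here we use that $\pi_k\in(a,b)$ for $k<\tau$. The left side is at most $1$, since $\pi^2\le 1$. This gives $\EE(\tau\wedge n)\le 1/\delta$, and monotone convergence yields $\EE_{\pi_0}\tau \le 1/\delta$. If $\pi_0\notin(a,b)$, then $\tau=0$ and the bound is trivial. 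A sharper constant could be obtained (the left side is $\le 1-\pi_0^2$, or one could use $\EE(\pi_\tau-\pi_0)^2$), but the crude bound suffices.

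The main obstacle is making the martingale claim rigorous under the predictive measure. The kernel $P$ describes beliefs forecast with $\psi(\pi_n,\cdot)$, and the conditional expectation and variance formulas must hold at every state, including where $\psi$ vanishes on a null set. These reduce to the one-step integral identities above, which are routine. Integrability is automatic because $\pi_n$ is bounded.
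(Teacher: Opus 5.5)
Your proposal is correct and follows essentially the same route as the paper: the martingale property of $(\pi_n)$, the identity $D(\pi)=\pi^2(1-\pi)^2\int [f_1-f_0]^2/\psi$, the bound $\psi\le f_0+f_1$ giving $D\ge\delta$ on $(a,b)$, and optional stopping at $\tau\wedge n$ followed by monotone convergence. The only difference is that you inline the paper's exit-time bound (\cref{t:exit}) using the predictable compensator $\sum_{k<n}D(\pi_k)$ instead of the realized quadratic variation; this makes $\EE\sum_{k<\tau\wedge n}D(\pi_k)\ge\delta\,\EE(\tau\wedge n)$ a pathwise bound and removes the need for a dominated-convergence step.
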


\begin{proof}
    Fix $\pi_0 \in (0,1)$. If $\pi_0$ is not in $(a,b)$, then $\tau = 0$, in
    which case the claim is trivial. Hence, from now on, we assume that $\pi_0$
    is in $(a,b)$.  Note that $(\pi_n)_{n \geq 0}$ is a martingale, since
    \begin{equation*}
        \EE[\pi_{n+1} \mid \pi_n = \pi]
        = \int \frac{\pi f_1(z)}{\psi(\pi, z)} \cdot \psi(\pi, z) \, \diff z
        = \int \pi f_1(z) \, \diff z = \pi.
    \end{equation*}
    We apply \cref{t:exit} to this bounded martingale. Our main task is to
    obtain $\delta$ in \eqref{eq:var_bound}. For fixed $\pi$, the law of motion
    \eqref{eq:bsu} yields
    \begin{equation*}
        \EE[\pi^2_{n+1} \mid \pi_n = \pi]
        = \int \frac{\pi^2 f^2_1(z)}{\psi(\pi, z)} \, \diff z
        = \pi^2 \int \frac{f^2_1(z)}{\psi(\pi, z)} \, \diff z.
    \end{equation*}
    (As for the triangular discrimination, the integrand is defined to be
    zero when $\psi(\pi, z) = 0$, which occurs only when $f_0(z) = f_1(z) = 0$).
    Using the fact that $(\pi_n)$ is a martingale, we get 
    \begin{equation*}
        v(\pi) \coloneq \EE[(\pi_{n+1} - \pi_n)^2 \mid \pi_n = \pi]
        = \pi^2 \left[\int \frac{f^2_1(z)}{\psi(\pi, z)} \, \diff z - 1\right].
    \end{equation*}
    To simplify the bracketed term, we observe that
    \begin{align*}
        \int \frac{[f_1(z) - \psi(\pi, z)]^2}{\psi(\pi, z)} \, \diff z
        &= \int \frac{f_1(z)^2}{\psi(\pi, z)} \, \diff z
            - 2\int f_1(z) \, \diff z + \int \psi(\pi, z) \, \diff z \\
        &= \int \frac{f_1(z)^2}{\psi(\pi, z)} \, \diff z - 1.
    \end{align*}
    Using this fact, combined with $f_1(z) - \psi(\pi, z) = (1-\pi)[f_1(z) -
    f_0(z)]$ and the definition of $v(\pi)$, gives
    \begin{equation}\label{eq:var_formula}
        v(\pi) = \pi^2 (1-\pi)^2 \int \frac{[f_1(z) - f_0(z)]^2}{\psi(\pi, z)} \, \diff z.
    \end{equation}
    Since $\psi(\pi, z) \leq f_0(z) + f_1(z)$, we have
    \begin{equation}\label{eq:lower_bound}
        \int \frac{[f_1(z) - f_0(z)]^2}{\psi(\pi, z)} \, \diff z
        \geq \int \frac{[f_1(z) - f_0(z)]^2}{f_0(z) + f_1(z)} \, \diff z
        = \Delta(f_0, f_1).
    \end{equation}
    Combining \eqref{eq:var_formula} and \eqref{eq:lower_bound}, for $\pi \in [a,b]$:
    \begin{equation*}
        v(\pi) \geq \pi^2 (1-\pi)^2 \Delta(f_0, f_1) \geq a^2 (1-b)^2 \Delta(f_0, f_1) = \delta.
    \end{equation*}
    By \cref{l:delta_positive}, the triangular discrimination is strictly
    positive, so $\delta > 0$. Since $v(\pi) = \EE[(\pi_{n+1} - \pi_n)^2 \mid
    \pi_n = \pi]$, this verifies condition~\eqref{eq:var_bound} for $M_n =
    \pi_n$ and this choice of $\delta$. Applying \cref{t:exit} and using
    $(\pi_\tau - \pi_0)^2 \leq 1$, we get
    \begin{equation*}
        \EE_{\pi_0}[\tau] \leq \frac{\EE_{\pi_0}[(\pi_\tau - \pi_0)^2]}{\delta}
        \leq \frac{1}{\delta}.
    \end{equation*}
    This verifies the claim in \cref{l:belief}.
\end{proof}

\begin{proof}[Proof of \cref{p:waldop2}]
    Let $f_0$ and $f_1$ be distinct. Since $(V, \TT_{\rm SA})$ is a special case
    of the no-discount optimal stopping ADP treated in \cref{t:ndbk}, we only
    need to show that $\sup_{\pi \in (0,1)} \EE_\pi \bar \tau$ is finite,
    where $\bar \tau$ is as defined in \eqref{eq:btb}.

    Consider first the degenerate case $c/L_0 + c/L_1 \geq 1$.  In this case
    the two intervals $(0, c/L_0]$ and $[1 - c/L_1, 1)$ cover $(0,1)$, so
    $\bar E = (0,1)$ and $\bar\tau = 0$ $\PP_\pi$-a.s.\ for every $\pi \in (0,1)$.
    Hence $\sup_{\pi \in (0,1)} \EE_\pi \bar\tau = 0 < \infty$.

    Now consider the non-degenerate case $c/L_0 + c/L_1 < 1$.  Setting
    $a \coloneq c/L_0$ and $b \coloneq 1 - c/L_1$, we have $0 < a < b < 1$.
    By construction, the stopping time $\bar\tau$ in \eqref{eq:btb} is the
    first exit time of the belief process from $(a,b)$, so \cref{l:belief}
    applies and yields $\EE_{\pi}[\bar\tau] \leq 1/\delta$, where
    \begin{equation*}
        \delta
            \coloneq a^2 (1-b)^2 \Delta(f_0, f_1)
            = \left(\frac{c}{L_0}\right)^2 \left(\frac{c}{L_1}\right)^2 \Delta(f_0, f_1)
            = \left(\frac{c^2}{L_0L_1}\right)^2 \Delta(f_0, f_1).
    \end{equation*}
    Since $f_0$ and $f_1$ are distinct, \cref{l:delta_positive} implies that
    $\Delta(f_0, f_1) > 0$, so $\delta > 0$. The bound
    $\EE_{\pi}[\bar\tau] \leq 1/\delta$ is valid for all $\pi \in (0, 1)$,
    confirming that \cref{a:cap} holds.
    As a result, all the claims in \cref{t:ndbk} are valid.
\end{proof}

\section{Conclusion}\label{s:conclusion}

In this paper we extended the theory of abstract dynamic programming by
pairing order-theoretic structure with topological and metric foundations.
Working on pospaces, we showed that global stability of the policy operators,
combined with regularity and existence of a fixed point of the Bellman
operator (or primitive conditions implying it), yields the fundamental
optimality properties together with convergence of value function iteration,
Howard policy iteration, and optimistic policy iteration.  On partially
ordered metric spaces, contractivity delivers the same conclusions along with
geometric rates of convergence.  We further showed that stationary policies
weakly dominate nonstationary policy plans, and we obtained minimization
versions of our main results through order duality.  The applications illustrate
the reach of the framework: it recovers standard results for weak Feller
MDPs, accommodates structural estimation problems that lie outside existing
frameworks, and yields new results for no-discount optimal stopping and
Bayesian sequential analysis.

Several directions for future work suggest themselves.  One is to extend the
present results to unbounded settings, where value spaces fail to be order
bounded and contractivity holds only with respect to weighted norms.  Another
is to apply the framework to additional problem classes mentioned in the
introduction, including distributional dynamic programming and recursive
preference models.  Finally, the convergence results for Howard and
optimistic policy iteration obtained here suggest investigating sample-based
counterparts of these algorithms, connecting the optimality theory developed
above to the convergence theory of reinforcement learning.

\section{Appendix}

This appendix contains supporting results and longer proofs omitted from the
main text.  Section~\ref{ss:stb} establishes a bound on exit times for
bounded martingales, which is used in the sequential analysis application of
Section~\ref{ss:sar}.  Section~\ref{ss:nsp} proves \cref{t:spo}, on
nonstationary policies.  Section~\ref{ss:pmc} provides proofs of the
minimization results stated in Section~\ref{ss:minrpospace}.

\subsection{A Stopping Time Bound}\label{ss:stb}

Here we state a result on exit times for bounded martingales that is used in the
proof of \cref{p:waldop2}. In the statement of the theorem, $(M_t, \fF_t)$ is a
discrete-time martingale with $M_0 = m$, and
\begin{equation*}
    \tau = \inf\{t \geq 0 : M_t \notin (a,b)\}
\end{equation*}
for some $a,b$ in $\RR$ with $a < b$.

\begin{theorem}\label{t:exit}
    Let $(M_t, \fF_t)$ be bounded, in the sense that there exists a $K \in \RR$
    with $|M_t| \leq K$ with probability one for all $t$.  If, in addition,
    there exists a $\delta > 0$ with
    \begin{equation}\label{eq:var_bound}
        \delta \leq \EE[(M_{t+1} - M_t)^2 \mid \fF_t] \quad \text{for all } t < \tau,
    \end{equation}
    then
    \begin{equation*}
        \EE_m[\tau] \leq \frac{\EE_m[(M_\tau - m)^2]}{\delta}.
    \end{equation*}
\end{theorem}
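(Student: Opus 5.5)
The plan is to use the standard quadratic-variation identity for martingales, $M_t^2 - \sum_{s<t} D_s$, applied along the truncated stopping times $\tau \wedge n$, and then let $n \to \infty$ by monotone and bounded convergence. Write $D_{s} \coloneq \EE[(M_{s+1}-M_s)^2 \mid \fF_s]$. By orthogonality of martingale increments,
\begin{equation*}
    N_t \coloneq (M_t - m)^2 - \sum_{s=0}^{t-1} D_s
\end{equation*}
is a martingale. Indeed, $\EE[(M_{t+1}-m)^2 - (M_t-m)^2 \mid \fF_t] = \EE[(M_{t+1}-M_t)^2 \mid \fF_t]$, because the cross term $2(M_t - m)\EE[M_{t+1}-M_t \mid \fF_t]$ vanishes. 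Integrability is immediate, since $|M_t| \leq K$ makes every term bounded by a constant depending on $t$.

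Next, I fix $n \in \NN$ and apply optional stopping to the bounded stopping time $\tau \wedge n$. This is legitimate for any martingale, and it gives $\EE_m N_{\tau \wedge n} = N_0 = 0$, that is,
\begin{equation*}
    \EE_m \left[ \sum_{s=0}^{(\tau\wedge n) - 1} D_s \right] = \EE_m (M_{\tau\wedge n} - m)^2 .
\end{equation*}
Every index $s$ in the sum satisfies $s < \tau$, so \eqref{eq:var_bound} gives $D_s \geq \delta$ for each such $s$. Hence the left side is at least $\delta \, \EE_m[\tau \wedge n]$, and therefore $\delta\, \EE_m[\tau\wedge n] \leq \EE_m (M_{\tau\wedge n}-m)^2 \leq 4K^2$.

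Finally, I let $n \to \infty$. Monotone convergence gives $\EE_m[\tau] \leq 4K^2/\delta < \infty$, so $\tau < \infty$ almost surely. It follows that $M_{\tau\wedge n} \to M_\tau$ almost surely. The integrands $(M_{\tau \wedge n}-m)^2$ are bounded by $4K^2$, so bounded convergence gives $\EE_m(M_{\tau\wedge n}-m)^2 \to \EE_m (M_\tau - m)^2$. Passing to the limit on both sides yields $\delta\, \EE_m[\tau] \leq \EE_m[(M_\tau - m)^2]$, which is the claim.

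The main subtlety is the order of the argument: finiteness of $\tau$ must come first, from the crude bound $4K^2/\delta$, and only then is $M_\tau$ well defined, so that the limit can be identified. A second point to note is that \eqref{eq:var_bound} is only used on $\{s < \tau\}$, an $\fF_s$-measurable event. I would therefore phrase the lower bound as $D_s \1\{s<\tau\} \geq \delta \1\{s<\tau\}$ and write $\sum_{s<\tau\wedge n} D_s = \sum_{s=0}^{n-1} D_s \1\{s<\tau\}$, so that expectations can be taken term by term. The trivial case $m \notin (a,b)$ gives $\tau = 0$, where both sides vanish, and it is covered automatically by this argument.
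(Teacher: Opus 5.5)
Your proof is correct and follows essentially the same route as the paper: build a quadratic-variation martingale, apply optional stopping at the bounded time $\tau \wedge n$, then use monotone convergence on the left and bounded convergence on the right. There are two minor differences. First, you compensate with the predictable sum $\sum_{s<t} \EE[(M_{s+1}-M_s)^2 \mid \fF_s]$ rather than the paper's realized sum $\sum_{s<t}(M_{s+1}-M_s)^2$, which makes the lower bound $\delta\,\EE_m[\tau\wedge n]$ immediate, whereas the paper's version implicitly needs a tower-property step. Second, you explicitly obtain $\tau<\infty$ a.s.\ from the crude bound $4K^2/\delta$ before identifying the limit $M_\tau$, a point the paper leaves implicit.
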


\begin{proof}
    Throughout the proof, all expectations are conditional on $M_0 = m$, and
    we write $\EE$ for $\EE_m$.  Define the process
    $\langle M \rangle_t \coloneq \sum_{s=0}^{t-1} (M_{s+1} - M_s)^2$,
    with $\langle M \rangle_0 \coloneq 0$.
    We claim that $(M_t - M_0)^2 - \langle M \rangle_t$ is a martingale.
    To see this we write $M_{t+1} - M_0 = (M_{t+1} - M_t) + (M_t - M_0)$ and
    square it to get
    \begin{equation*}
        (M_{t+1} - M_0)^2 = (M_t - M_0)^2 + 2(M_t - M_0)(M_{t+1} - M_t) + (M_{t+1} - M_t)^2.
    \end{equation*}
    Taking conditional expectations and using the martingale property,
    \begin{equation}\label{eq:sq_expand}
        \EE[(M_{t+1} - M_0)^2 \mid \fF_t]
            = (M_t - M_0)^2 + \EE[(M_{t+1} - M_t)^2 \mid \fF_t].
    \end{equation}
    By the definition of $\langle M \rangle$,
    $\langle M \rangle_{t+1} = \langle M \rangle_t + (M_{t+1} - M_t)^2$, so
    \begin{equation}\label{eq:qv_expand}
        \EE[\langle M \rangle_{t+1} \mid \fF_t]
            = \langle M \rangle_t + \EE[(M_{t+1} - M_t)^2 \mid \fF_t].
    \end{equation}
    Subtracting \eqref{eq:qv_expand} from \eqref{eq:sq_expand}, we get
    \begin{equation*}
        \EE[(M_{t+1} - M_0)^2 - \langle M \rangle_{t+1} \mid \fF_t]
        = (M_t - M_0)^2 - \langle M \rangle_t.
    \end{equation*}
    This is the martingale property for $(M_t - M_0)^2 - \langle M \rangle_t$.
    Since $(M_t)$ is bounded, this process is integrable for each $t$, and hence
    is a martingale. Since $\tau \wedge n \leq n$ is a bounded stopping time, the
    optional stopping theorem gives
    \begin{equation*}
        \EE[(M_{\tau \wedge n} - M_0)^2 - \langle M \rangle_{\tau \wedge n}]
        = \EE[(M_0 - M_0)^2 - \langle M \rangle_0] = 0,
    \end{equation*}
    and hence
    \begin{equation}\label{eq:ost_applied}
        \EE[(M_{\tau \wedge n} - M_0)^2] = \EE[\langle M \rangle_{\tau \wedge n}].
    \end{equation}
    In addition, for $t < \tau$, we have
    \begin{equation*}
        \EE[\langle M \rangle_{t+1} - \langle M \rangle_t \mid \fF_t]
        = \EE[(M_{t+1} - M_t)^2 \mid \fF_t] \geq \delta,
    \end{equation*}
    so
    \begin{equation}\label{eq:qv_lower}
        \EE[\langle M \rangle_{\tau \wedge n}]
        = \EE\left[\sum_{t=0}^{\tau \wedge n-1} (M_{t+1} - M_t)^2\right]
        \geq \delta \cdot \EE[\tau \wedge n].
    \end{equation}
    Combining \eqref{eq:qv_lower} and \eqref{eq:ost_applied} yields
    \begin{equation*}
        \EE[\tau \wedge n]
        \leq \frac{\EE[(M_{\tau \wedge n} - M_0)^2]}{\delta}.
    \end{equation*}
    Taking $n \to \infty$ and applying monotone convergence on the left and
    dominated convergence on the right (using boundedness of the martingale)
    gives the bound claimed in \cref{t:exit}.
\end{proof}

\subsection{Proof of \cref{t:spo}}\label{ss:nsp}

In proving \cref{t:spo}, we use the following preliminary result.

\begin{lemma}\label{l:aue}
    If \cref{a:uc} holds, then, for each $v \in V$ and policy plan
    $\bar \sigma \coloneq (\sigma_t)_{t \geq 0}$, the limit
    \begin{equation*}
        v_{\bar \sigma} \coloneq \lim_{n \to \infty}
          T_{\sigma_0} \cdots  T_{\sigma_n} \, v
    \end{equation*}
    exists in $V$ and is independent of $v$.
\end{lemma}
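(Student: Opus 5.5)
The plan is to show that the sequence $u_n \coloneq T_{\sigma_0} \cdots T_{\sigma_n} v$ is Cauchy in $(V,d)$, invoke completeness to get a limit, and then show that two different starting points give the same limit. Only the uniform contraction modulus $\lambda$ and the uniform bound on one-step displacements from \cref{a:uc} are needed; order structure and sup-nonexpansiveness play no role here.

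Fix $v \in V$ and a plan $\bar\sigma$, and set
\begin{equation*}
    M_v \coloneq \sup_{\sigma \in \Sigma} d(v, T_\sigma v) < \infty
\end{equation*}
by \cref{a:uc}. For each $n$, the operator $T_{\sigma_0} \cdots T_{\sigma_n}$ is a composition of $n+1$ maps that are each $\lambda$-contractions, so it is a contraction of modulus $\lambda^{n+1}$. Since
\begin{equation*}
    u_{n+1} = T_{\sigma_0} \cdots T_{\sigma_n} \bigl( T_{\sigma_{n+1}} v \bigr)
    \quad \text{and} \quad
    u_n = T_{\sigma_0} \cdots T_{\sigma_n} v,
\end{equation*}
we obtain
\begin{equation*}
    d(u_{n+1}, u_n) \leq \lambda^{n+1} d(T_{\sigma_{n+1}} v, v) \leq \lambda^{n+1} M_v .
\end{equation*}
The triangle inequality then gives $d(u_m, u_n) \leq M_v \sum_{k > n} \lambda^k = M_v \lambda^{n+1}/(1-\lambda)$ for $m > n$. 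Hence $(u_n)$ is Cauchy, and completeness of $d$ yields a limit in $V$.

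For independence of $v$, take $w \in V$ and compare the two sequences termwise:
\begin{equation*}
    d\bigl(T_{\sigma_0} \cdots T_{\sigma_n} v,\; T_{\sigma_0} \cdots T_{\sigma_n} w\bigr) \leq \lambda^{n+1} d(v,w) \to 0 .
\end{equation*}
Both sequences converge, so their limits coincide.

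The only step that needs care is the Cauchy estimate, because the policies change with $n$ and a single fixed point cannot be used as an anchor. That is exactly why the uniform bound $\sup_\sigma d(v, T_\sigma v) < \infty$ is needed: it controls the one-step displacement $d(T_{\sigma_{n+1}} v, v)$ uniformly in $n$. With that bound in hand, the remaining steps are routine.
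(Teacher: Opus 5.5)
Your proof is correct and follows the paper's argument. You get a Cauchy estimate from the uniform modulus $\lambda$ and the uniform bound $\sup_{\sigma} d(v, T_\sigma v) < \infty$, then use completeness, then the termwise bound $\lambda^{n+1} d(v,w)$ for independence of $v$. The only cosmetic difference is that you telescope over consecutive terms $d(u_{k+1}, u_k)$, while the paper factors out the common prefix $T_{\sigma_0} \cdots T_{\sigma_m}$ and telescopes inside it; both yield the same bound $\lambda^{n+1} M_v/(1-\lambda)$.
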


\begin{proof}
    Fix $v \in V$ and policy plan $\bar \sigma \coloneq (\sigma_t)_{t \geq 0}$.
    Given the policy plan above and $m \leq n$, we adopt the following simplified notation:
    \begin{equation*}
         T_{m, n} \coloneq T_{\sigma_m} \circ T_{\sigma_{m+1}} \circ \cdots \circ T_{\sigma_n}.
    \end{equation*}

    Let $v_n = T_{0, n} v$. Our claim is that $\lim_n v_n$ exists in
    $V$ and is independent of $v$.  To see this,
    observe first that $(v_n)$ is Cauchy, since, fixing $m, j \in \NN$,
    \begin{equation*}
        d(v_m, v_{m+j}) \leq \lambda^{m+1} d \left( v,  T_{m+1, m+j}  v \right) ,
    \end{equation*}
    and, by repeatedly applying the triangle inequality,
    \begin{multline*}
        d (v,  T_{m+1, m+j}  v)
        \leq
            d (v,  T_{m+1, m+1} v)
            + d (T_{m+1, m+1} v,  T_{m+1, m+2} v)
            \\
            + \cdots
            + d (T_{m+1, m+j-1} v, T_{m+1, m+j} v)
            .
    \end{multline*}
    By \cref{a:uc}, there exists a finite constant
    $b$ satisfying $d (  v, T_{\sigma_k} v ) \leq b$ for all $k$.
    From this and the last bound (with the convention that $T_{m+1, m}$ is the
    identity, the $k$-th telescoping term satisfies
    $d(T_{m+1, m+k-1} \, v, T_{m+1, m+k} \, v) \leq \lambda^{k-1}
    d(v, T_{\sigma_{m+k}} v) \leq \lambda^{k-1} b$), we obtain
    \begin{equation*}
        d (v,  T_{m+1, m+j}  v)
        \leq b + \lambda b + \cdots + \lambda^{j-1} b
        \leq \frac{b}{1-\lambda}.
    \end{equation*}
    As a result, $d(v_m, v_{m+j}) \leq \lambda^{m+1} b / (1- \lambda)$.  This shows
    that $(v_n)$ is Cauchy. Using completeness of $V$ and letting $\bar v$ be
    the limit of this sequence, we argue that $\bar v$ is independent of $v$.
    Indeed, if $w_n \coloneq T_{0, n} \, w$ for some $w \in V$, then $d(v_n,
    w_n) \leq \lambda^{n+1} d(v, w)$ for all $n$, so that $(v_n)$ and $(w_n)$ have
    the same limit. Hence $\lim_{n \to \infty}   T_{0, n} \, v$ exists in $V$
    and is independent of the initial condition $v$.
\end{proof}

\begin{proof}[Proof of \cref{t:spo}]
    Under the hypotheses of \cref{t:spo}, $(V, \TT)$ is $V$-semi-regular
    (since regularity gives $V_G = V$, with $TV \subset V$ trivially), so
    the first claim is immediate from \cref{t:contract}.

    Regarding the second, let $\bar \sigma \coloneq (\sigma_t)_{t \geq 0}$ be
    an arbitrary policy plan and let $\sigma$ be an optimal policy. Since
    $v_\sigma$ is a fixed point of $T$ and $T_{\sigma_t} \, w \preceq T w$
    for every $w \in V$ and every $t$, induction on $j$ yields
    \begin{equation*}
        T_{\sigma_0} T_{\sigma_1} \cdots T_{\sigma_j} \, v_\sigma
        \preceq T^{j+1} v_\sigma
        = v_\sigma
        \qquad \text{for all } j \in \NN.
    \end{equation*}
    By \cref{l:aue}, the left-hand side converges to $v_{\bar \sigma}$ as $j
    \to \infty$.  Since the partial order is closed, taking the limit
    in $j$ yields $v_{\bar \sigma} \preceq v_\sigma$.  This proves that every
    policy plan is weakly dominated by a stationary policy.
\end{proof}

\subsection{Proofs for the Minimization Cases}\label{ss:pmc}

Here we provide proofs for the results in Section~\ref{ss:minrpospace}.
One option is to just replicate all proofs by reversing inequalities and
replacing sup with inf.  Another is to use order duality, converting minimization problems 
into maximization problems, calling on the maximization results, and converting
back.  We use the second strategy.

To increase clarity, when discussing maximization and minimization in
the same section, we add a ``max-'' prefix to the previously introduced
definitions that pertain to maximization.  For example,
\begin{itemize}
    \item ``$v$-greedy policies'' will be referred to as \navy{$v$-max-greedy policies},
    \item ``optimal policies'' will be referred to as \navy{max-optimal policies},
    \item ``the Bellman equation'' will be referred to as the \navy{Bellman max-equation},
\end{itemize}
and so on. In addition, rather than using generic $T$ for the Bellman operator,
we use $\tmax$ for the max case and $\tmin$ for the min case.
Likewise, instead of using generic $v^*$ for the value function, we use
$\vmax$ for the max case and $\vmin$ for the min case.  
Similar conventions are used for the operators $H$ and $W$.

If $V \coloneq (V, \preceq)$ is a partially ordered set, then its \navy{order
dual} $V^\partial \coloneq (V, \preceq^\partial)$ is the set $V$ paired with the
partial order $\preceq^\partial$ obtained by setting $u \preceq^\partial v$ if
and only if $v \preceq u$.  If $(V, \TT)$ is any ADP, then we call $(V,
{\TT})^\partial \coloneq (V^\partial, \TT)$ the \navy{dual} of $(V, \TT)$. In
other words, the dual $(V, {\TT})^\partial$ of $(V, \TT)$ is the ADP created by
maintaining the same family of policy operators $\TT$ while replacing the
partially ordered set $V$ with its order dual $V^\partial$.

It is easy to confirm that $(V, {\TT})^\partial$ is an ADP.

Regarding notation for $(V, {\TT})^\partial$,
\begin{itemize}
    \item the Bellman max-operator will be denoted by $\tmax^\partial$,
    \item the Bellman min-operator will be denoted by $\tmin^\partial$,
    \item the max-value function will be denoted by $\vmax^\partial$,
    \item $V_G^\partial$ is all $v \in V$ such that at least one $v$-max greedy
        policy exists for $(V, \TT)^\partial$,
    \item etc.
\end{itemize}

Each ADP is self-dual, in the sense that $((V, {\TT})^\partial)^\partial = (V,
\TT)$.  This follows from the fact that all partially ordered sets are
self-dual.

The following elementary observation will be used repeatedly.  It says that
suprema and infima reverse roles under the order dual.

\begin{lemma}\label{ex:dualsi}
    Let $A \subset V$.  If $\bigvee A$ exists in $V$, then
    $\bigwedge^\partial A$ exists in $V^\partial$ and
    $\bigwedge^\partial A = \bigvee A$.  Conversely, if $\bigwedge A$
    exists in $V$, then $\bigvee^\partial A$ exists in $V^\partial$ and
    $\bigvee^\partial A = \bigwedge A$.
\end{lemma}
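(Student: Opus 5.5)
The plan is to unwind the definitions of supremum and infimum and use only the fact that $\preceq^\partial$ is the reverse of $\preceq$. By the symmetry between the two claims, and the self-duality $(V^\partial)^\partial = V$, it is enough to prove the first claim in full. The second then follows by applying the first to the poset $V^\partial$ in place of $V$, with the roles of $\preceq$ and $\preceq^\partial$ interchanged.

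For the first claim, suppose $s \coloneq \bigvee A$ exists in $V$. The argument has two steps.

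\begin{enumerate}
    \item $s$ is a lower bound of $A$ in $V^\partial$. For every $a \in A$ we have $a \preceq s$, which by the definition of the order dual means $s \preceq^\partial a$.
    \item $s$ is the greatest such lower bound. Let $w$ be any lower bound of $A$ in $V^\partial$, so $w \preceq^\partial a$ for all $a \in A$. Equivalently, $a \preceq w$ for all $a \in A$, so $w$ is an upper bound of $A$ in $V$. Since $s$ is the least upper bound in $V$, we get $s \preceq w$, that is, $w \preceq^\partial s$.
\end{enumerate}

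Together these steps show that $s$ is the greatest lower bound of $A$ in $V^\partial$. Hence $\bigwedge^\partial A$ exists and equals $\bigvee A$. Uniqueness of infima, which holds by antisymmetry of $\preceq^\partial$, lets us write the equality rather than merely saying $s$ is an infimum.

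For the converse claim, I would either repeat the same two steps with inequalities reversed, or invoke the first claim on $V^\partial$. In the second route, $\bigwedge A$ in $V$ is the infimum with respect to $(\preceq^\partial)^\partial$. The first claim applied to the poset $V^\partial$ then shows that $\bigwedge A$ is the supremum of $A$ under $\preceq^\partial$, which is exactly $\bigvee^\partial A = \bigwedge A$.

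There is no real obstacle. The only point needing care is keeping track of which order each "bound" refers to, so I would write each inequality explicitly with $\preceq$ or $\preceq^\partial$ rather than relying on words like "upper" and "lower".
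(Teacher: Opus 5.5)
Your proof is correct and follows essentially the same route as the paper: unwind $u \preceq^\partial v \iff v \preceq u$ so that lower bounds of $A$ under $\preceq^\partial$ are exactly upper bounds under $\preceq$, and greatest under $\preceq^\partial$ means least under $\preceq$. The remaining half is then handled by duality.

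One caution about your second route for the converse. The first claim, as you state it, is a one-way implication. Substituting $V^\partial$ for $V$ in it gives ``if $\bigvee^\partial A$ exists then $\bigwedge A$ exists,'' which is the opposite of what the converse requires. That route becomes valid once you note that each step of your unwinding is an equivalence, so the first claim is really a biconditional. This is how the paper phrases it: it identifies the least upper bound under $\preceq^\partial$ with the greatest lower bound under $\preceq$ before invoking self-duality. Your first route, repeating the two steps with inequalities reversed, needs no such remark.
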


\begin{proof}
    Immediate from the definition $u \preceq^\partial v \iff v \preceq u$:
    an upper bound for $A$ under $\preceq^\partial$ is a lower bound under
    $\preceq$, and the least upper bound under $\preceq^\partial$ is the
    greatest lower bound under $\preceq$.  The second statement follows by
    self-duality.
\end{proof}

\begin{lemma}\label{l:mmp}
    If $(V, \TT)$ is a well-posed ADP with dual $(V, {\TT})^\partial$, then,
    fixing  $v \in V$, the following statements are valid:
    \begin{enumerate}
        \item $\sigma$ is $v$-min-greedy for $(V, \TT)$ if and only if $\sigma$
            is $v$-max-greedy for $(V, {\TT})^\partial$.
        \item $(V, \TT)$ is min-regular if and only if $(V, {\TT})^\partial$ is
            max-regular,
        \item $(V, \TT)$ is min-order bounded if and only if $(V, {\TT})^\partial$ is
            max-order bounded,
        \item If $\tmax^\partial v$ exists then so does $\tmin v$, and,
            moreover, $\tmin v = \tmax^\partial v$.
        \item If $W^\partial v$ exists then so does $\Wmin v$, and,
            moreover, $\Wmin v = W^\partial v$.
        \item If $H^\partial v$ exists then so does $\Hmin v$, and,
            moreover, $\Hmin v = H^\partial v$.
        \item If $\vmax^\partial$ exists for $(V, {\TT})^\partial$, then $\vmin$ exists for $(V, \TT)$ and $\vmin = \vmax^\partial$.
        \item $\sigma \in \Sigma$ is min-optimal for $(V, \TT)$ if and only if
            $\sigma$ is max-optimal for $(V, {\TT})^\partial$.
        \item $\VGmin = V_G^\partial$.
    \end{enumerate}
\end{lemma}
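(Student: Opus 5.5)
The plan is to verify each item directly from the definitions, using only the rule $u \preceq^\partial v \iff v \preceq u$ and \cref{ex:dualsi}. The one structural fact needed first is that $(V,\TT)^\partial$ has the same policy operators as $(V,\TT)$. Each $T_\sigma$ is still order preserving under $\preceq^\partial$. Fixed points do not involve the order at all. So the dual is well-posed, each $v_\sigma$ is unchanged, and the set $V_\Sigma$ is literally the same in both ADPs. Everything below turns on this invariance together with the reversal of inequalities.

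We then take the items in order.

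For (i), $\sigma$ is $v$-min-greedy when $T_\sigma v \preceq T_\tau v$ for all $\tau$. This is the same as $T_\tau v \preceq^\partial T_\sigma v$ for all $\tau$, which is precisely $v$-max-greedy in the dual.

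Item (ii) follows from (i) applied at every $v$. Item (ix) is also (i), read as an equality of the sets of $v$ admitting such a policy.

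For (iii), $b \preceq T_\sigma b$ for all $\sigma$ is the same as $T_\sigma b \preceq^\partial b$ for all $\sigma$.

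For (iv), $\tmax^\partial v = \bigvee^\partial_\sigma T_\sigma v$. By \cref{ex:dualsi} (applied with the roles of $V$ and $V^\partial$ swapped, using self-duality), this equals $\bigwedge_\sigma T_\sigma v = \tmin v$, and in particular the latter exists.

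Items (v) and (vi) follow from (i). The operators $W^\partial$ and $H^\partial$ select a $v$-max-greedy policy for the dual, which is a $v$-min-greedy policy for $(V,\TT)$. Both then apply the same map, $T_\sigma^m v$ or $v_\sigma$, so the values agree. One should add that we make the same selection of greedy policy in both ADPs; this is legitimate by (i), since the two sets of admissible policies coincide.

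For (vii), $\vmax^\partial = \bigvee^\partial V_\Sigma$, and by \cref{ex:dualsi} this equals $\bigwedge V_\Sigma = \vmin$.

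For (viii), $\sigma$ is max-optimal in the dual iff $v_\sigma$ is a $\preceq^\partial$-greatest element of $V_\Sigma$. That is the same as being a $\preceq$-least element of $V_\Sigma$, which is min-optimality.

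None of these steps is difficult; the argument is bookkeeping. The point needing most care is (v)–(vi), namely the convention that both operators use the same greedy selection. A secondary point is making sure \cref{ex:dualsi} is applied in the correct direction via self-duality.
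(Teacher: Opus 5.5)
Your proposal is correct and follows the paper's proof essentially item by item. Like the paper, you read each claim off the reversal $u \preceq^\partial v \iff v \preceq u$, \cref{ex:dualsi} (applied in the correct direction via self-duality), the order-independence of the fixed points $v_\sigma$, and the common greedy-selection convention for (v)--(vi). The only deviation is in (viii): you argue directly that a $\preceq^\partial$-greatest element of $V_\Sigma$ is a $\preceq$-least element, rather than routing it through (vii). This is slightly cleaner, since it avoids first checking that $\vmax^\partial$ or $\vmin$ exists.
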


\begin{proof}
    Regarding (i), fix $v \in V$. Policy $\sigma$ is $v$-min-greedy for $(V,
    \TT)$ if and only if $T_\sigma \, v \preceq T_\tau \, v$ for all $\tau \in
    \Sigma$, which is equivalent to $T_\tau \, v \preceq^\partial T_\sigma \, v$
    for all $\tau \in \Sigma$.  Hence $\sigma$ is $v$-min-greedy for $(V, \TT)$
    if and only if $\sigma$ is $v$-max-greedy for $(V, {\TT})^\partial$.

    Claim (ii) follows directly from (i): $(V, \TT)$ is min-regular iff a
    $v$-min-greedy policy exists for every $v$, iff a $v$-max-greedy policy
    exists for $(V, \TT)^\partial$ for every $v$, iff $(V, \TT)^\partial$ is
    max-regular.

    Claim (iii) holds because $b \preceq T_\sigma \, b$ for all $\sigma$ is
    equivalent to $T_\sigma \, b \preceq^\partial b$ for all $\sigma$ by the
    definition of $\preceq^\partial$.

    Claim (iv) is immediate from \cref{ex:dualsi}: when $\tmax^\partial v =
    \bigvee_\sigma^\partial T_\sigma \, v$ exists in $V^\partial$, the
    infimum $\bigwedge_\sigma T_\sigma \, v$ exists in $V$ and equals
    $\tmax^\partial v$; the former is precisely $\tmin v$.

    For claim (v), by (i) the set of $v$-min-greedy policies for $(V, \TT)$
    coincides with the set of $v$-max-greedy policies for $(V, \TT)^\partial$,
    and \emph{the same selection rule} fixes a common $\sigma$.  Since both
    $\Wmin$ and $W^\partial$ are defined as $T_\sigma^m \, v$ for that common
    $\sigma$, they agree wherever defined.  Claim (vi) is analogous: both
    $\Hmin v$ and $H^\partial v$ equal $v_\sigma$ -- the unique fixed point of
    $T_\sigma$ in $V$, which does not depend on which partial order we equip
    $V$ with -- for the common $v$-(min/max)-greedy $\sigma$.

    For claim (vii), recall $\vmin \coloneq \bigwedge_\sigma v_\sigma$ and
    $\vmax^\partial \coloneq \bigvee_\sigma^\partial v_\sigma$.  By
    \cref{ex:dualsi}, if $\vmax^\partial$ exists in $V^\partial$ then
    $\vmin = \bigwedge_\sigma v_\sigma$ exists in $V$ and equals
    $\vmax^\partial$.

    Claim (viii) follows from (vii): $\sigma$ is min-optimal for $(V, \TT)$
    iff $v_\sigma = \vmin$, which by (vii) holds iff $v_\sigma =
    \vmax^\partial$, iff $\sigma$ is max-optimal for $(V, \TT)^\partial$.

    Claim (ix) follows from (i): $v \in \VGmin$ iff a $v$-min-greedy policy
    for $(V, \TT)$ exists, iff a $v$-max-greedy policy for $(V, \TT)^\partial$
    exists, iff $v \in V_G^\partial$.
\end{proof}

Self-duality implies corollaries to \cref{l:mmp} that we treat as
self-evident.  For example, $\sigma \in \Sigma$ is min-optimal for $(V,
{\TT})^\partial$ if and only if $\sigma$ is max-optimal for $(V, \TT)$.

Part (viii) of \cref{l:mmp} tells us that we can solve an ADP for a
min-optimal policy by switching to the dual ADP and maximizing.

The following result is useful when we consider minimization problems.

\begin{lemma}\label{l:odod}
    $S$ is order stable on $V$ if and only if $S$ is order stable on $V^\partial$.
\end{lemma}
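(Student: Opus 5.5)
The plan is to unpack the definition of order stability on each side and check that the two lists of conditions are the same, read in a different order. Order stability of $S$ on $V$ involves three requirements: $S$ has a unique fixed point $\bar v$ in $V$; $v \preceq Sv$ implies $v \preceq \bar v$; and $Sv \preceq v$ implies $\bar v \preceq v$. Order stability of $S$ on $V^\partial$ involves the same three requirements with $\preceq$ replaced by $\preceq^\partial$.

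First I will note that the fixed-point requirement does not depend on the order at all. The set of fixed points of $S$ is a property of the map $S\colon V \to V$, and $V$ and $V^\partial$ have the same underlying set. So $S$ has a unique fixed point $\bar v$ in $V$ if and only if it has the same unique fixed point in $V^\partial$.

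Next I will translate the two implications using the definition $u \preceq^\partial w \iff w \preceq u$. Condition (i) on $V^\partial$ reads: $v \preceq^\partial Sv$ implies $v \preceq^\partial \bar v$. This is the statement that $Sv \preceq v$ implies $\bar v \preceq v$, which is condition (ii) on $V$. In the same way, condition (ii) on $V^\partial$ is exactly condition (i) on $V$. So the conjunction of (i) and (ii) is unchanged by passing to the dual, and the ``only if'' direction follows.

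The ``if'' direction follows by applying the same argument to $V^\partial$ and using self-duality, $(V^\partial)^\partial = V$. I do not expect a real obstacle here. The only point needing care is that the paper's definition of order stability does not mention order preservation; if it did, I would also observe that $S$ is order preserving on $V$ if and only if it is order preserving on $V^\partial$, since reversing both inequalities preserves the implication.
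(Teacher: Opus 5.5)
Your proposal is correct and follows essentially the same route as the paper: the fixed point of $S$ does not depend on the order, condition (i) on $V^\partial$ is condition (ii) on $V$ (and vice versa) after unwinding $u \preceq^\partial w \iff w \preceq u$, and the reverse direction comes from self-duality. As you observe, the swap of (i) and (ii) already gives both directions, so the appeal to $(V^\partial)^\partial = V$ is a convenience rather than a necessity.
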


\begin{proof}
    Suppose that $S$ is order stable on $V$.  By definition, $S$ has a unique
    fixed point $\bar v \in V$. Hence it remains only to check that conditions
    (i)--(ii) in the definition of order stability hold on $V^\partial$.
    Regarding (i), suppose $v \in V$ and $v
    \preceq^\partial S v$.  Then $Sv \preceq v$ and hence $\bar v \preceq v$, by
    condition (ii) applied on $V$.  But then $v \preceq^\partial \bar v$, so
    condition (i) holds on $V^\partial$.  The proof of
    condition (ii) on $V^\partial$ is similar.

    We have shown that $S$ is order stable on $V^\partial$ whenever $S$ is order
    stable on $V$.  The reverse implication holds because the dual of
    $V^\partial$ is $V$.
\end{proof}

\begin{theorem}\label{t:fbk_min}
    The fundamental max-optimality properties hold
    for $(V, {\TT})^\partial$ if and only if the fundamental min-optimality properties hold
    for $(V, \TT)$.  Moreover,
    \begin{enumerate}
        \item max-VFI converges for $(V, {\TT})^\partial$ if and only if min-VFI converges for $(V, \TT)$,
        \item max-OPI converges for $(V, {\TT})^\partial$ if and only if min-OPI
            converges for $(V, \TT)$, and
        \item max-HPI converges for $(V, {\TT})^\partial$ if and only if min-HPI converges for $(V, \TT)$.
    \end{enumerate}
\end{theorem}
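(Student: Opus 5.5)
The proof is a direct translation exercise using \cref{ex:dualsi}, \cref{l:mmp} and self-duality. The plan is to check that every object entering the definitions on one side coincides with the corresponding object on the other. Throughout, assume $(V,\TT)$ is well-posed, as in the setting of \cref{l:mmp}. Since well-posedness only concerns fixed points, which do not depend on the order, $V_\Sigma$ and each $v_\sigma$ are the same for $(V,\TT)$ and $(V,\TT)^\partial$.

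\emph{Fundamental optimality properties.} We will prove the three items one at a time and in both directions.
\begin{enumerate}
\item For (B1), part (viii) of \cref{l:mmp} shows that min-optimal policies for $(V,\TT)$ and max-optimal policies for $(V,\TT)^\partial$ coincide.
\item For (B2), we need that $\vmin$ exists if and only if $\vmax^\partial$ exists, with equal values. One direction is (vii) of \cref{l:mmp}; the other is \cref{ex:dualsi} applied in reverse, or (vii) applied to the dual combined with self-duality.
\item Still for (B2), apply (iv) in both directions (again via \cref{ex:dualsi}). This gives that $\tmin v$ exists if and only if $\tmax^\partial v$ exists, and that the two agree. Hence the solution sets of the Bellman min-equation and the Bellman max-equation of the dual coincide. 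By (ix) the relevant sets $\VGmin = V_G^\partial$ are equal, so uniqueness of the solution in that set transfers.
\item For (B3), combine (i) of \cref{l:mmp} with the equality $\vmin = \vmax^\partial$. Then $v^*$-min-greedy and $v^*$-max-greedy (for the dual) are the same condition, and the optimal sets agree by (viii). When the value functions do not exist, both sides are empty on both sides, by the existence equivalence above.
\end{enumerate}
In each item the converse direction is obtained by applying the same argument to $(V,\TT)^\partial$, whose dual is $(V,\TT)$.

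\emph{Algorithms.} The sets on which convergence is required must match. Indeed $V_D=\{v: \tmin v\preceq v\}$ equals $\{v: v\preceq^\partial \tmax^\partial v\}$, which is the set $V_U$ for the dual, because $\tmin=\tmax^\partial$ wherever either is defined. For $v$ in this common set, \cref{l:mmp}(iv)--(vi) give $\tmin^n v=(\tmax^\partial)^n v$, $\Wmin^n v = (W^\partial)^n v$ and $\Hmin^n v=(H^\partial)^n v$ by induction on $n$. This uses the convention that the same greedy selection is used for both, so the iterates stay in the common domain. Finally, for a sequence $(u_n)$, the statement $u_n\downarrow \vmin$ in $V$ means $(u_n)$ is decreasing with infimum $\vmin$. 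By \cref{ex:dualsi}, this is exactly $u_n\uparrow \vmax^\partial$ in $V^\partial$. Hence each convergence statement in (i)--(iii) is literally the same statement on both sides.

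The main obstacle is bookkeeping rather than mathematics. \cref{l:mmp} is stated only as one-way implications, for example ``if $\tmax^\partial v$ exists then so does $\tmin v$''. The biconditionals therefore have to be obtained by applying the lemma also to the dual ADP and invoking self-duality. The induction showing that the iterates of $W$ and $H$ coincide must also respect the convention that a common greedy selection is fixed.
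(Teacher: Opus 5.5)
Your proposal is correct and follows the same route as the paper. The paper's proof simply asserts that each ingredient of the min-optimality properties, and of min-VFI/OPI/HPI convergence, coincides via \cref{l:mmp} and \cref{ex:dualsi} with its max-side counterpart for $(V,\TT)^\partial$, using that $\downarrow$ in $V$ is $\uparrow$ in $V^\partial$; you spell out that bookkeeping, including the useful check that $V_D$ for $(V,\TT)$ is $V_U$ for the dual.

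One small correction: the converse directions of \cref{l:mmp}(iv) and (vii) come from the second half of \cref{ex:dualsi}, since an infimum in $V$ is a supremum in $V^\partial$. They do not come from applying \cref{l:mmp} to $(V,\TT)^\partial$ and invoking self-duality, because that substitution relates the max side of $(V,\TT)$ to the min side of $(V,\TT)^\partial$ instead. Your first justification for (B2) and your step (iii) already cite \cref{ex:dualsi}, so they are fine; it is the alternative "(vii) applied to the dual" and the closing paragraph that state the wrong mechanism.
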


\begin{proof}
    Immediate from \cref{l:mmp} and \cref{ex:dualsi}: each ingredient of
    the min-optimality properties for $(V, \TT)$ -- existence of a min-optimal
    policy, $\vmin$ existing in $\VGmin$ and satisfying the Bellman
    min-equation, Bellman's principle of min-optimality -- coincides with the
    corresponding max-side ingredient for $(V, \TT)^\partial$.  The
    convergence claims follow analogously, using that $T^n v \downarrow v^*$
    in $V$ if and only if $T^n v \uparrow v^*$ in $V^\partial$.
\end{proof}

Now we can prove the main results from Section~\ref{ss:minrpospace}.

\begin{proof}[Proof of \cref{t:minpospace}]
    Let $(V, \TT)$ be as stated and let $(V, \TT)^\partial$ be the dual ADP.
    Since $(V, \TT)$  is min-regular, $(V, \TT)^\partial$ is max-regular
    (\cref{l:mmp}).  Since
    $(V, \TT)$ is globally stable, $(V, \TT)^\partial$ is likewise globally
    stable. By assumption, the Bellman min-operator $\tmin$ for $(V, \TT)$ has a
    fixed point $\bar v$ in $V$.  Since the Bellman max-operator $T^\partial$
    for $(V, \TT)^\partial$ satisfies $\tmax^\partial = \tmin$ (the supremum
    under $\preceq^\partial$ equals the infimum under $\preceq$), the element
    $\bar v$ is also a fixed point of $\tmax^\partial$.
    As a result, \cref{t:pospace} implies that, for $(V,
    \TT)^\partial$, the fundamental max-optimality properties hold and
    max-VFI, max-OPI and max-HPI all converge.  The conclusions of
    \cref{t:minpospace} now follow from \cref{t:fbk_min}.
\end{proof}

\begin{lemma}\label{l:cdcdual}
    If $V$ is any partially ordered set, then $V^\partial$ is countably Dedekind complete
    whenever $V$ is countably Dedekind complete.
\end{lemma}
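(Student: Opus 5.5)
The plan is to unfold the definition of countable Dedekind completeness in $V^\partial$ and apply \cref{ex:dualsi} to each of its two clauses. Countable Dedekind completeness of $V^\partial$ asks for two things. Every nonempty countable $A \subset V$ that is bounded above under $\preceq^\partial$ should have a supremum in $V^\partial$. Every nonempty countable $A$ that is bounded below under $\preceq^\partial$ should have an infimum in $V^\partial$. I will check each clause separately, translating every statement about $\preceq^\partial$ back into one about $\preceq$.

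For the first clause, I fix a nonempty countable $A \subset V$ with an upper bound $u$ under $\preceq^\partial$. So $a \preceq^\partial u$ for all $a \in A$. By the definition of the dual order, this means $u \preceq a$ for all $a \in A$, so $A$ is bounded below in $V$. Countable Dedekind completeness of $V$ then gives that $\bigwedge A$ exists in $V$. The second statement of \cref{ex:dualsi} turns this into existence of $\bigvee^\partial A$ in $V^\partial$, with $\bigvee^\partial A = \bigwedge A$.

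For the second clause, I take $A$ nonempty, countable and bounded below under $\preceq^\partial$. The same translation shows that $A$ is bounded above in $V$, so $\bigvee A$ exists in $V$. The first statement of \cref{ex:dualsi} then gives that $\bigwedge^\partial A$ exists and equals $\bigvee A$.

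There is no real obstacle. The only point needing care is matching each bound under $\preceq^\partial$ with the opposite bound under $\preceq$, so that the correct half of the hypothesis on $V$ is invoked in each case. Self-duality then also gives the converse implication, although the statement does not require it.
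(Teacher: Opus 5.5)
Your proposal is correct and follows the paper's proof: a countable set bounded above under $\preceq^\partial$ is bounded below under $\preceq$, so its infimum exists in $V$ and is its supremum in $V^\partial$. The only difference is that you write out the bounded-below clause and cite \cref{ex:dualsi} explicitly, whereas the paper calls that case ``similar.''
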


\begin{proof}
    Let $V$ be countably Dedekind complete and let $A \subset V$ be finite or countable
    and, for the partial order $\preceq^\partial$, bounded above by some $b \in V$.
    This means that $a \preceq^\partial b$ for all $a \in A$, so $b \preceq a$
    for all $a \in A$.  In other words, $b$ is a lower bound for $A$ in $V$.
    Thus, by countable Dedekind completeness, the infimum of $A$ exists in $V$.
    It follows that the supremum of $A$ exists in $V^\partial$.  The proof for
    the bounded below case is similar.
\end{proof}

\begin{proof}[Proof of \cref{t:mintspo}]
    Let $(V, \TT)$ be as stated and let $(V, \TT)^\partial$ be the dual ADP.
    The set $V^\partial$ is countably Dedekind complete by \cref{l:cdcdual}.
    Since $(V, \TT)$ is min-regular
    and min-order bounded, the dual $(V, \TT)^\partial$ is max-regular and
    max-order bounded (\cref{l:mmp}).
    Since global stability of $(V, \TT)$ is equivalent to global stability of
    $(V, \TT)^\partial$, the conditions of \cref{t:tspo} all hold for
    $(V, \TT)^\partial$.  As a result, for $(V,
    \TT)^\partial$, the fundamental max-optimality properties hold and
    max-VFI, max-OPI and max-HPI all converge.
    The conclusions of
    \cref{t:mintspo} now follow from \cref{t:fbk_min}.
\end{proof}

\bibliographystyle{plainnat}
\bibliography{qe_bib}

\end{document}